\newtheorem{theorem}{Theorem}[section]
\newtheorem{corollary}[theorem]{Corollary}
\newtheorem{lemma}[theorem]{Lemma}
\newtheorem{proposition}[theorem]{Proposition}
\theoremstyle{definition}
\newtheorem{definition}[theorem]{Definition}
\theoremstyle{remark}
\newtheorem{example}[theorem]{Example}
\theoremstyle{remark}
\newtheorem{remark}[theorem]{Remark}
\theoremstyle{remark}
\newcommand{\C}{\mathbb{C}}
\newcommand{\N}{\mathbb{N}}
\newcommand{\Z}{\mathbf{Z}}  %CAREFUL!!! This is for tangles, not integers
\newcommand{\T}{\mathbf{T}}
\newcommand{\I}{\mathbf{I}}
\newcommand{\D}{\mathbf{D}}
\newcommand{\E}{\mathbf{E}}
\newcommand{\F}{\mathbf{F}}
\newcommand{\G}{\mathbf{G}}
\newcommand{\X}{\mathcal{X}}
\newcommand{\la}{\left\langle}
\newcommand{\ra}{\right\rangle}
\newcommand{\vres}{\includegraphics[scale=0.18]{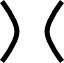}}
\newcommand{\hres}{\includegraphics[scale=0.18]{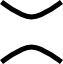}}
\newcommand{\TLei}{\includegraphics[scale=0.35]{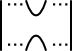}}
\newcommand{\crossing}{\includegraphics[scale=0.18]{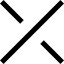}}
\newcommand{\ncrossing}{\includegraphics[scale=0.18]{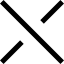}\text{ }}
\newcommand{\vsimeq}{\begin{sideways}\begin{sideways}\begin{sideways}$\simeq$\end{sideways}\end{sideways}\end{sideways}}
\newcommand{\twoheaddownarrow}{\begin{sideways}$\twoheadleftarrow$\end{sideways}}
\title{A Colored Khovanov Homotopy Type And Its Tail For B-Adequate Links}
\author{Michael Willis \\
Department of Mathematics, University of Virginia\\
\href{mailto:msw3ka@virginia.edu}{\texttt{msw3ka@virginia.edu}}}
\begin{document}

\maketitle

\begin{abstract}
We define a Khovanov homotopy type for $\mathfrak{sl}_2(\C)$ colored links and quantum spin networks and derive some of its basic properties.  In the case of $n$-colored B-adequate links, we show a stabilization of the homotopy types as the coloring $n\rightarrow\infty$, generalizing the tail behavior of the colored Jones polynomial.  Finally, we also provide an alternative, simpler stabilization in the case of the colored unknot.
\end{abstract}

\section{Introduction}
In \cite{Khov} Mikhail Khovanov introduced the Khovanov homology $Kh^{i,j}(L)$ of a knot or link $L$, the homology of a bigraded chain complex $KC^{i,j}(L)$ with graded Euler characteristic equal to the Jones polynomial of $L$.  In \cite{LS} Robert Lipshitz and Sucharit Sarkar defined the Khovanov homotopy type of $L$, a wedge sum of spectra $\X^j(L)$ whose reduced cohomology groups satisfy $\tilde{H}^i(\X^j(L))\cong Kh^{i,j}(L)$.  It is then natural to ask what types of structural results about Khovanov homology extend to the Khovanov homotopy type.

In \cite{MW} the author proved that, for the torus links $T(n,m)$, the Khovanov homotopy types $\X(T(n,m))$ stabilize as $m\rightarrow\infty$, allowing a well-defined notion of a limiting Khovanov homotopy type $\X(T(n,\infty))$.  Due to Lev Rozansky's arguments in \cite{Roz}, this result could be interpreted as defining a colored Khovanov homotopy type for the $n$-colored unknot.  (Here `colored' refers to assigning an irreducible $\mathfrak{sl}_2(\C)$ representation, as determined by its dimension $n\in\N$, to each component of the link.)  In this paper, we prove the following two extensions of \cite{MW}.

\begin{theorem}
\label{Colored X exists}
There exists a stable colored Khovanov homotopy type for any $\mathfrak{sl}_2(\C)$ colored link.  Its reduced cohomology is isomorphic to the colored Khovanov homology defined in \cite{CK}, \cite{Roz2}.
\end{theorem}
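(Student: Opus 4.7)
The plan is to build $\X_{\text{col}}(L)$ by cabling each component of $L$ and inserting a categorified Jones--Wenzl projector realized as the limit of torus braid tangle spectra, following the strategy of \cite{MW} but promoted from closed links to tangles. More precisely, for an $n$-colored component I would replace it by an $n$-cable and splice in a torus braid $T(n,m)$ on a small arc; the colored homotopy type will be defined as an appropriate (homotopy) limit as $m \to \infty$ of the resulting diagrams, simultaneously over all components. At the level of chain complexes this is exactly the Rozansky--Cooper--Krushkal construction, so the target cohomology is the right one by construction once the spectrum-level limit is established.

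The first step is to set up a tangle-level Khovanov spectrum with enough functoriality to glue. I would use the Burnside functor / flow category formalism underlying \cite{LS} applied to the cube of resolutions of a tangle diagram, assigning to each tangle $T$ with fixed boundary a (stable) homotopy type $\X(T)$ that recovers the Khovanov spectrum of a link when the outer boundary is closed up. The construction of \cite{MW} producing $\X(T(n,\infty))$ as a stable homotopy type from the sequence $\X(T(n,m))$ should then lift to the tangle setting: the Bar-Natan/Rozansky estimates controlling which generators survive in each quantum degree are local on the braid, and the cofinal systems used in \cite{MW} are produced by adding full twists, an operation that makes equally good sense on the $(n,n)$-tangle. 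This yields a tangle spectrum $\X(T(n,\infty))$ that is a stable homotopy-type lift of the categorified Jones--Wenzl projector $P_n$ of \cite{CK}, \cite{Roz2}.

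Next I would glue. Given an $n$-colored link diagram $D$ for $L$, take the $n$-cable of each component, pick a splicing arc per component, and form the family of spectra $\X(D_m)$, where $D_m$ is the cabled diagram with a torus braid $T(n_i,m_i)$ inserted on the $i$-th component. Functoriality of the tangle spectrum under gluing, together with the stabilization of each $T(n_i,m_i)$-tangle, should produce a well-defined stable homotopy type $\X_{\text{col}}(L) := \mathrm{holim}_{m\to\infty}\X(D_m)$, where the limit is taken in a quantum-degree-bounded sense (only finitely many degrees change when $m_i$ is large, so the inverse system is eventually constant in each fixed $q$-degree). The independence of $\X_{\text{col}}(L)$ from the choice of diagram and splicing arcs reduces, via the invariance of the tangle spectrum under Reidemeister moves and isotopy through the projector (a standard property of $P_n$), to checks that were already essentially carried out in the homological setting of \cite{CK}, \cite{Roz2}.

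Finally, the cohomology identification is immediate from the chain-level picture: $\tilde H^\ast(\X_{\text{col}}(L))$ is computed by an inverse system of Khovanov chain complexes of the $D_m$, whose limit is by construction the Cooper--Krushkal/Rozansky colored Khovanov complex. I expect the main obstacle to be the tangle-level stabilization: one must show that the structure maps $\X(T(n,m))\to\X(T(n,m+1))$ used to define the projector at the spectrum level are honest maps of spectra (or at least realize a coherent diagram in the homotopy category) and that they become equivalences in each quantum degree. The corresponding statement for closed torus links in \cite{MW} rests on the Cooper--Krushkal bounds and explicit cellular models; the work here will be to check that those arguments extend locally to the tangle, uniformly in whatever is attached to the boundary, so that the inverse system of $\X(D_m)$ inherits the same degreewise stabilization.
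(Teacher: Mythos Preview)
Your proposal has the right skeleton --- replace projectors by finite torus braids, pass to a limit in each $q$-degree --- but it rests on a step the paper deliberately avoids and which, at the time, was a genuine open problem: building a tangle-level Khovanov spectrum with gluing. The paper states explicitly that ``the Khovanov homotopy type has not yet been defined for tangles in general, and only exists for links,'' and the whole architecture of the argument is designed around that constraint. So your plan to first construct $\X(T)$ for tangles, then produce a tangle spectrum $\X(T(n,\infty))$ lifting $P_n$, and only then glue, is a substantially harder project than what is actually needed, and you never get to use the shortcut the paper exploits.

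What the paper does instead is stay entirely in the world of \emph{closed} link diagrams. Given a diagram $D$ with projectors, it forms the honest link diagrams $D(k_1,\dots,k_m)$ with $\T_{n_i}^{\pm k_i}$ in place of $P_{n_i}$, and builds maps between their Lipshitz--Sarkar spectra directly from the cofibration sequences of Lemma~\ref{Lemma3.32} (resolving one crossing at a time). The stabilization in each fixed $q$-degree is then a pure $q$-degree estimate: pulling the turnback in each $\E_i$ through the twists (the Counting Lemma~\ref{Counting lemma}) forces the ``wrong'' resolution to have homology only above a $q$-degree that grows linearly in $k$, so for $k$ beyond an explicit bound $b^\pm$ the maps are equivalences. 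The crucial point you flagged as your ``main obstacle'' --- that stabilization at one projector must be uniform in whatever is attached at the boundary --- is resolved without tangle spectra by the observation that $b_i^+$ depends only on the all-zero resolution of the complementary tangle, and the all-zero resolution of any $\T_{n_j}^{k_j}$ is just $\I_{n_j}$, independent of $k_j$. This makes the bounds for the various projectors mutually independent and lets one take the homotopy colimit diagonally. Invariance (placement of projector, Reidemeister moves) is then checked on finite-twist approximations, which are ordinary links.

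Two smaller points: the paper takes a homotopy \emph{colimit} of the resulting directed system, not a holim; and the system is not literally an inverse system of chain complexes but a sequence of spectra related by cofibrations that become equivalences past the bound. Your cohomology identification is correct in spirit, but it follows immediately once one knows $\X^{j}_c(L_\gamma)\simeq \X^{j}(D_{L_\gamma}(k))$ for $k$ large, with no tangle-level input required.
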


\begin{theorem}
\label{Spin Network X exists}
There exists a stable Khovanov homotopy type for any $\mathfrak{sl}_2(\C)$ quantum spin network.  Its reduced cohomology is isomorphic to the homology of the categorified spin networks defined in \cite{CK}.
\end{theorem}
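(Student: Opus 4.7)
The strategy is to reduce to Theorem~\ref{Colored X exists} by viewing a spin network as a colored link diagram decorated with fixed local vertex tangles. Recall that an $\mathfrak{sl}_2(\C)$ quantum spin network is an oriented trivalent graph in $S^3$ with edges labeled by non-negative integers, subject to admissibility ($a+b+c$ even and the triangle inequality) at each vertex. Following \cite{CK}, one categorifies such a network by cabling each edge of color $n$ into $n$ parallel strands carrying a Jones--Wenzl projector, and at each vertex with colors $(a,b,c)$ inserting the standard admissible triangle tangle: with $i=(a+b-c)/2$, $j=(b+c-a)/2$, $k=(a+c-b)/2$, one pairs $i$ strands between the $a$ and $b$ edges, $j$ between $b$ and $c$, and $k$ between $a$ and $c$ via nested cups and caps.

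The plan has three steps. First, I replace the Jones--Wenzl projector on each edge of color $n_e$ by a highly twisted cabled torus braid, following the Rozansky-type limit of \cite{Roz} used in the proof of Theorem~\ref{Colored X exists}. This yields, for each multi-index of twist parameters $\vec m = (m_e)_{e \in E(G)}$, a genuine link diagram $L_{\vec m}$ and hence a Lipshitz--Sarkar spectrum $\X(L_{\vec m})$. Second, I show that these spectra stabilize as $\vec m \to \infty$: freezing all but one coordinate and viewing the rest of $G$ as a fixed relative tangle, the single-parameter stabilization established for Theorem~\ref{Colored X exists} gives stabilization in that coordinate; assembling these coordinate-wise stabilizations produces a multi-parameter stable limit and a well-defined spectrum $\X(G)$. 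Third, the Lipshitz--Sarkar identification of $\tilde H^i(\X^j(L_{\vec m}))$ with $Kh^{i,j}(L_{\vec m})$ at each finite stage, combined with the fact that the Cooper--Krushkal spin network homology of \cite{CK} is by construction the analogous stable limit of $Kh$ groups, yields the cohomology statement.

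The main obstacle is Step 2: showing that the admissible triangle tangles do not obstruct the stabilization of the twisted cables along each individual edge. These tangles contribute cups and caps immediately adjacent to the twist regions, and in the Lipshitz--Sarkar framework they could a priori interact with the cofiber sequences that drive the stable limit. The key point is that admissibility guarantees no strand of an edge passes through its own vertex tangle back into the same edge -- the cups always connect strands of distinct incident edges. Consequently, the cofibers appearing in the stabilization for a given edge look identical to those analyzed in Theorem~\ref{Colored X exists}, with only bounded decoration at the boundary of the twist region. Formalizing this relative stabilization, and verifying that the multi-parameter limit is well-defined and agrees with the iterated single-parameter limit, is the technical heart of the argument.
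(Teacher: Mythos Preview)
Your approach is essentially correct but takes an unnecessarily roundabout route compared to the paper. The paper proves Theorem~\ref{Spin Network X exists} as an immediate corollary of Theorem~\ref{Projectors X exists}: the spin network $G$ determines a link diagram $D(G)$ with finitely many Jones--Wenzl projectors (exactly as you describe, via cabling and the admissible-triangle vertex tangles), and Theorem~\ref{Projectors X exists} directly supplies a stable homotopy type for any such diagram. The only additional content in the paper's proof is checking invariance under graph isotopy (automatic, since these induce link isotopies of $D(G)$) and under the vertex twist move (handled by Corollary~\ref{X straighten braids}, which you do not address).

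Two points of comparison. First, you phrase the reduction as going through Theorem~\ref{Colored X exists}, but that theorem is about colored \emph{links}, where projectors sit on cables of closed components; spin networks have projectors on edges meeting at trivalent vertices, so the structure is genuinely different. What you actually do in Steps~1 and~2 is re-derive the multi-parameter stabilization that is the content of Theorem~\ref{Projectors X exists}, not invoke Theorem~\ref{Colored X exists}. Second, the ``main obstacle'' you identify---that cups and caps from the vertex tangles might interfere with the single-edge cofiber analysis---is not an obstacle at all in the paper's framework. Proposition~\ref{main seq stab} establishes stabilization of $\la \T_n^{\pm k}, \Z\ra$ for \emph{arbitrary} closing tangles $\Z$; the bound depends only on the all-zero (or all-one) resolution of $\Z$, with no hypothesis on how caps in $\Z$ meet the twist region. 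Your admissibility observation (that no strand returns to its own edge) is correct but relevant only to nontriviality of the resulting spectrum (via Proposition~\ref{X killed by turnbacks}), not to convergence. So your Step~2 can simply be replaced by a citation of Theorem~\ref{Projectors X exists}.
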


Both the colored Khovanov homology and the categorified quantum spin networks mentioned in these theorems are defined using the categorified Jones-Wenzl projectors.  Thus theorems \ref{Colored X exists} and \ref{Spin Network X exists} will be viewed as special cases of a slightly more general theorem that can be stated as follows.

\begin{theorem}
\label{Projectors X exists}
For any link diagram $D$ involving a finite number of Jones-Wenzl projectors, there exists a stable Khovanov homotopy type $\X(D)$ with reduced cohomology isomorphic to the homology defined using the categorified Jones-Wenzl projectors as in \cite{Roz} and \cite{CK}.
\end{theorem}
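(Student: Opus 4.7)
The plan is to replace each Jones--Wenzl projector in $D$ by a sequence of finite-dimensional approximations---namely, closures of long twist tangles---apply the Lipshitz--Sarkar construction to each resulting honest link diagram, and define $\X(D)$ as a suitable homotopy colimit of the resulting sequence of spectra. Concretely, around each of the finitely many projectors $p_1,\ldots,p_r$ appearing in $D$, fix a small disk and replace $p_i$, which carries color $n_i$, by $k$ full twists on the $n_i$-cable; the result is an honest link diagram $D_k$ with a Lipshitz--Sarkar Khovanov spectrum $\X(D_k)$. The construction follows the template of \cite{MW} for the torus link case, where a single such replacement (yielding the colored unknot) already produced a stable limit.

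\textbf{Key steps.} First, after an appropriate quantum-grading shift, the Bar--Natan cobordism complexes of $D_k$ and $D_{k+1}$ are related by a mapping cone whose cone term has homological support shifted upward with $k$, so that in any fixed bidegree the complexes eventually agree; this is essentially the chain-level content of \cite{Roz} and \cite{CK}. Second, I would lift these chain-level relations to maps of spectra $\X(D_k) \to \X(D_{k+1})$ by invoking functoriality of the Lipshitz--Sarkar construction under the link cobordism that adds a twist, yielding a cofibration sequence of spectra whose cofiber has cells concentrated above height $\sim k$. Third, define $\X(D) := \mathrm{hocolim}_k \, \X(D_k)$ and verify independence of the choice of approximation (twist direction, ordering of projectors, size of local disks) by exhibiting a cofinal diagram relating any two such towers.

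\textbf{Cohomology identification.} The identification of $\tilde{H}^*(\X(D))$ with the Cooper--Krushkal/Rozansky categorified-projector homology reduces to a Milnor $\lim^1$ argument: the tower of cohomologies $\{\tilde{H}^*(\X(D_k))\}$ is eventually constant in each bidegree, hence Mittag-Leffler, so $\tilde{H}^*(\X(D)) \cong \lim_k Kh(D_k)$, which by construction is precisely the homology defined using the categorified projectors.

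\textbf{Main obstacle.} The hardest step should be the spectrum-level lift in step two, particularly the simultaneous handling of several projectors: the stabilization maps must be produced compatibly for all $r$ projectors at once, not merely one at a time. My strategy is to prove a local, tangle-relative version of the Lipshitz--Sarkar construction so that a single projector can be approximated inside an arbitrary fixed surrounding tangle context, and then iterate the one-projector stabilization across the $r$ projectors in turn, checking at each stage that the resulting tower of spectra still has the cofibre-support property needed to form the homotopy colimit. This relative/local upgrade of \cite{MW}, which only needed to handle an unknotted projector in empty context, is what I expect to constitute the bulk of the technical work.
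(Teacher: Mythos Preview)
Your high-level strategy matches the paper's: replace each projector by $k$ full twists, build maps between successive approximations, and take a homotopy colimit. But the mechanism for the maps and the identification of the main obstacle both diverge from the paper in ways that matter.

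\textbf{The maps.} You propose producing $\X(D_k)\to\X(D_{k+1})$ via functoriality under a link cobordism that adds a twist. The paper does something far more elementary: it views $D_{k+1}$ as $D_k$ with one extra full twist and resolves the crossings in that twist one at a time. Each resolution gives a cofibration sequence $\X(D_i)\hookrightarrow\X(D_{i-1})\twoheadrightarrow\X(E_i)$ straight from Lemma~\ref{Lemma3.32} (i.e.\ \cite[Lemma~3.32]{LS}), and the map $\X(D_k)\hookrightarrow\X(D_{k+1})$ is simply the composite of these inclusions (Proposition~\ref{building seq}). No cobordism maps and no tangle-level spectra are ever invoked.

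\textbf{The actual obstacle.} You locate the difficulty in building a tangle-relative Lipshitz--Sarkar construction. The paper explicitly notes that no such construction exists and avoids it entirely; this is not the bottleneck. The real technical core is a $q$-degree estimate, Lemma~\ref{Counting lemma}: pulling a turnback through a full twist $\T_n^{\pm1}$ changes the normalization shift $\tau$ by exactly $2n$ (resp.\ $2n-6$). Since every $E_i$ above contains a turnback adjacent to $\T_n^k$, its minimal possible $q$-degree grows like $2nk$, so in any fixed $q$-degree $E_i$ is eventually acyclic and the inclusions become stable homotopy equivalences (Proposition~\ref{main seq stab}). This is what makes the tower genuinely stabilize, rendering your Mittag--Leffler/$\varprojlim^1$ argument unnecessary. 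Your proposal contains no analogue of this estimate, and without it there is no control on the cofibers.

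\textbf{Multiple projectors.} This, which you flag as hardest, is in fact the easy part once the estimate is in hand. The bound $b^+_i$ for the $i^{\text{th}}$ projector depends only on $j$ and on the all-zero resolution of the surrounding tangle; since the all-zero resolution of each $\T_{n_j}^{k_j}$ is the identity braid regardless of $k_j$, the bounds $b^+_1,\dots,b^+_r$ are mutually independent, and a single global $b^+=\max_i b^+_i$ works for the diagonal sequence. (This is also why the paper forbids mixing right- and left-handed twists.) No local or relative upgrade of \cite{MW} is needed.
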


An example of the type of diagram in the statement of Theorem \ref{Projectors X exists} is provided in Figure \ref{GenWenzEx}.  Notice that the Jones-Wenzl projectors themselves, and their categorifications, are defined using tangles.  The Khovanov homotopy type has not yet been defined for tangles, and so Theorem \ref{Projectors X exists} requires that the projectors involved are closed in some way to form a link diagram.  Nevertheless we will also prove several properties of such homotopy types, such as being `killed by turnbacks', that the projectors and their categorifications satisfy.

\begin{figure}[h]
\centering
\vspace{.1in}\includegraphics[scale=.5]{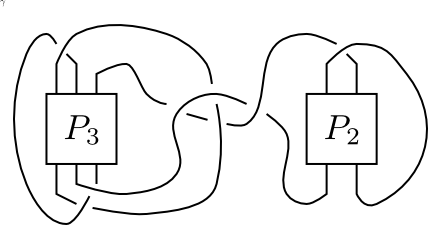}
\caption{An example diagram for which Theorem \ref{Projectors X exists} defines a Khovanov homotopy type}
\vspace{.1in}
\label{GenWenzEx}
\end{figure}

The proof of Theorem \ref{Projectors X exists} is a generalization of the proof in \cite{MW} for the torus links.  With \cite{Roz} in mind, we replace the projectors with torus braids, seeking a stabilization of the homotopy types as the number of twists in each such braid goes to infinity.  The strategy is similar to that in \cite{MW}, but requires some new bounds and estimates that account for the presence of crossings away from the twisting, as well as the (possibly changing) orientations of the strands being twisted.

\begin{remark}
\label{alt paper}
Theorem \ref{Colored X exists} was proved independently by Andrew Lobb, Patrick Orson, and Dirk Schuetz in \cite{LOS} which appeared on the Arxiv while this manuscript was in preparation.  The authors further remark in that paper that their methods could be used to prove a statement similar to Theorem \ref{Projectors X exists}.
\end{remark}

With a well-defined colored Khovanov homotopy type in hand, we follow the strategy of \cite{Roz2} to prove:

\begin{theorem}
\label{B-adequate tail}
The Khovanov homotopy types of $n$-colored B-adequate links stabilize as $n\rightarrow\infty$.
\end{theorem}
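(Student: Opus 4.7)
The plan is to follow Rozansky's strategy from \cite{Roz2} and lift his homological stabilization argument to the level of stable homotopy types, using the colored Khovanov homotopy type provided by Theorem \ref{Colored X exists} (equivalently by Theorem \ref{Projectors X exists}). By the latter, for each $n$ the homotopy type $\X(L^n)$ of the $n$-colored link is realized as a limit over finite-twist approximants obtained by replacing each Jones-Wenzl projector with an increasing torus braid. B-adequacy of a diagram $D$ for $L$ enters through its $n$-cable $D^n$: the all-B-smoothing of $D^n$ has the property that no crossing of $D^n$ has both arcs on a common resolution circle, which gives sharp quantum-grading control near the all-B state.

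With this in place, my plan of attack has four steps. First, I would construct natural stabilization maps between the finite-twist approximants for $\X(L^n)$ and $\X(L^{n+1})$ at the level of framed flow categories (or equivalently of Burnside 2-functors), mirroring the chain maps used in \cite{Roz2} to compare colored Khovanov complexes for consecutive colorings. Second, using the quantum-grading bounds developed in the proof of Theorem \ref{Projectors X exists} (which measure how far a resolution of a torus braid sits from the all-B state), I would show that for each fixed cohomological degree the cofiber of the stabilization map becomes arbitrarily highly connected once $n$ is large. Third, I would assemble these finite-range equivalences into an inverse system of spectra, as in the torus-link case treated in \cite{MW}, producing a limit spectrum $\X(L^\infty)$. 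Fourth, I would verify that $\tilde{H}^\ast(\X(L^\infty))$ recovers Rozansky's stable tail of the colored Khovanov homology.

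The main obstacle will be step two: Rozansky's chain-level stabilization rests on the acyclicity of certain subcomplexes once the number of twists is large, and acyclicity does not a priori lift to contractibility of the corresponding Lipshitz-Sarkar realization. To handle this, one must ensure that the filtration by distance to the all-B resolution is genuinely a filtration of the underlying framed flow category, so that the resulting spectrum-level cofibers are controlled. The quantum-grading estimates developed here for Theorem \ref{Projectors X exists}, generalized to incorporate the crossings and possibly varying orientations of $D$ outside the cabled twisting regions, should supply the uniform control needed, after which the telescoping argument of \cite{MW} can be adapted to close the proof.
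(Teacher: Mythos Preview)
Your outline is in the right spirit (lift Rozansky's argument from \cite{Roz2} to spectra via finite-twist approximations), but there is a genuine gap in step two, and it is not the one you flag.

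The quantum-grading bounds established in the proof of Theorem \ref{Projectors X exists} control the stabilization in the number of twists $k$, not in the color $n$. They tell you that, once $k$ is large, the finite-twist approximant computes the colored homotopy type in a given $q$-degree; they say nothing about comparing the $n$-colored and $(n+1)$-colored objects. Passing from $n$ to $n+1$ adds an entire strand to each cable, and this is not analyzed by resolving crossings in a torus braid. The input that actually controls the $n\to\infty$ behavior is Rozansky's homological vanishing bound (Theorem 2.1 of \cite{Roz2}), which uses B-adequacy through a spectral sequence coming from the multicone presentation obtained by resolving the crossings of the cabled diagram away from the projectors. Without invoking that bound, you have no mechanism by which the cofiber of your stabilization map becomes highly connected as $n$ grows.

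The paper's route also differs from yours in how it handles the ``acyclicity versus contractibility'' issue: it sidesteps it entirely. Rather than showing the cofiber is contractible, one lifts Rozansky's maps $f_n$ to maps $F_{n,j}$ of spectra and then argues, for each fixed $q$-degree, that once $n$ is large enough $F_{n,j}$ induces an isomorphism on \emph{all} cohomology groups. This uses two facts in tandem: Rozansky's maps are isomorphisms in the homological range $i_R\leq n-1$, and his vanishing bound forces the colored homology in that $q$-degree to be zero for $i_R>n-1$ once $n>\chi^!-2j+1$. Whitehead's theorem then gives the stable equivalence directly. No filtration of the framed flow category and no connectivity estimate for a cofiber are needed.

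Finally, your step one underestimates what is required to build the comparison map. Rozansky's $f_n$ is a composition of several distinct moves (Reidemeister moves, crossing resolutions, braid-straightening next to a projector, idempotent absorption of projectors, sliding projectors past strands, and the cone decomposition $P_{n+1}\simeq(\text{non-identity terms}\to\I_{n+1})$), and each must be lifted separately. The first five lift via Lemma \ref{Lemma3.32} and the properties in Corollaries \ref{X straighten braids} and \ref{X idempotent}; the last lifts because the cone structure of the categorified projector is itself realized on finite-twist approximations as a long composition of cofibrations from Lemma \ref{Lemma3.32}.
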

For a more detailed statement and an illustration of the stabilization, see Theorem \ref{B-adequate tail better} and Tables \ref{Colored table} and \ref{Hopf link Colored table} in Section 5.  Theorem \ref{B-adequate tail} gives us the stable tail behavior for the Khovanov homotopy types of colored B-adequate links, matching the behavior of the colored homology and colored Jones polynomials.  This theorem is a lifting of Theorem 2.2 in \cite{Roz2} to the stable homotopy category.  The proof will be based on two main ideas.  First we verify that all of the isomorphisms constructed in \cite{Roz2} between colored Khovanov homology groups lift to maps between the corresponding homotopy types.  Second we ensure that the homological range of isomorphism for these maps (which depends on $n$) can be translated into a range of $q$-degrees for which all non-zero homology is isomorphic via these same maps.  This will allow Whitehead's theorem to guarantee that the maps are stable homotopy equivalences.

Finally, we will also provide a more direct argument for the tail of the Khovanov homotopy type of the colored unknot; that is, for $\X(T(n,\infty))$ as $n\rightarrow\infty$.

\begin{theorem}
\label{n to infty}
In the case of the unknot, the $n$-colored Khovanov homotopy types $\X(T(n,\infty))$ defined in \cite{MW} stabilize as $n\rightarrow\infty$ and the stable limit $\X(T(\infty,\infty)):=\bigvee_{j\in\left(2\N\cup 0\right)}\X^j(T(\infty,\infty))$ satisfies
\[\X^j(T(\infty,\infty))\simeq\X^0(T(j,\infty))\simeq\X^{(j-1)^2}(T(j,j-1)) \hspace{.3in}\text{for }j>0\]
\[\X^{0}(T(\infty,\infty))\simeq\X^{-1}(T(1,\infty))\simeq S^0\]
where $S^0$ denotes the standard sphere spectrum.
\end{theorem}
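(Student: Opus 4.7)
The plan is to upgrade the chain-level stabilization of \cite{MW} from a one-parameter limit in $m$ to a simultaneous $q$-graded limit. I would begin by recalling from \cite{MW} that $\X(T(n,\infty))$ is a stable homotopy colimit of appropriately shifted spectra $\X(T(n,m))$ in which each fixed $q$-degree $j$ summand is already constant at some finite $m=m(n,j)$. Unwinding this directly yields the second claimed equivalence $\X^0(T(j,\infty))\simeq \X^{(j-1)^2}(T(j,j-1))$: the minimum non-vanishing $q$-degree of $\X(T(j,\infty))$ stabilizes already at $m=j-1$, and the normalization shift relating $\X(T(j,j-1))$ to the infinite twist is precisely the crossing count $(j-1)^2$ of $T(j,j-1)$. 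Similarly, $\X^{-1}(T(1,\infty))\simeq S^0$ is immediate, since $T(1,m)$ is the unknot for every $m$ and its Khovanov homotopy type has $S^0$ as its $q$-degree $-1$ wedge summand.

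For the stabilization in $n$, I would construct comparison maps $\X(T(n,\infty))\to \X(T(n{+}1,\infty))$ at the spectrum level, induced by including the $n$-strand subsystem into the $(n{+}1)$-strand infinite twist and then cancelling the added strand by a categorified Reidemeister-I move; these maps must be built compatibly with the $m$-direction stabilization of \cite{MW}. The crucial estimate is a $q$-degree bound showing that the cofiber of this map is concentrated in $q$-degrees above some threshold that grows with $n$, so that in each fixed $q$-degree $j$ the map becomes a stable equivalence once $n$ is sufficiently large. Whitehead's theorem, exactly as used in the proof of Theorem \ref{B-adequate tail}, then delivers a well-defined stable limit $\X(T(\infty,\infty))$ together with the wedge decomposition indexed by even $j\geq 0$.

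To identify the stable $q$-degree $j$ summand with $\X^0(T(j,\infty))$ for $j>0$, I would give a direct chain-level comparison. For $n$ large enough, the cellular model of the $j$-th $q$-degree summand of $\X(T(n,\infty))$ should match, after re-indexing by the difference of the two normalization shifts, the cellular model of the $0$-th $q$-degree summand of $\X(T(j,\infty))$. Concretely, both summands can be presented as the Khovanov spectrum of a particular finite torus braid with its minimum-$q$-degree component extracted, and I would argue these finite models agree after the appropriate shift. Combining with the previous paragraphs produces the full chain of equivalences $\X^j(T(\infty,\infty))\simeq \X^0(T(j,\infty))\simeq \X^{(j-1)^2}(T(j,j-1))$.

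The hardest part will be establishing the $q$-degree bound on the cofibers of the $n$-direction stabilization maps. This requires isolating, uniformly in $n$, how much new cellular information the added strand of the $(n{+}1)$-strand infinite twist contributes beyond the $n$-strand version. The estimate must be sharp enough to match the observed stabilization thresholds, and the resulting maps need to be constructed coherently with the $m$-direction stabilization of \cite{MW}; these compatibility requirements are the most delicate technical ingredient.
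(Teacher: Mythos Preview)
Your proposed construction of the comparison maps is not clearly workable, and it misses the key observation that makes the paper's argument simple. You suggest building $\X(T(n,\infty))\to\X(T(n{+}1,\infty))$ by ``including the $n$-strand subsystem into the $(n{+}1)$-strand infinite twist and then cancelling the added strand by a categorified Reidemeister-I move.'' But there is no natural inclusion of an $n$-strand torus braid into an $(n{+}1)$-strand one, and once any twisting is present an extra parallel strand links nontrivially with the rest; it cannot be removed by a Reidemeister~I. A map of this general flavor \emph{can} be built---it is essentially what Rozansky's $f_n$ do, and what Section~5 lifts to spectra---but that construction is far more elaborate than a single Reidemeister move, and your proposed cofiber estimate would then amount to reproving the B-adequate tail in the special case of the unknot, which is precisely what this theorem is meant to bypass.

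The paper's route is entirely different and avoids any new maps or cofiber bounds. The single observation is the isotopy $T(n,n{+}1)\cong T(n{+}1,n)$, which lets one splice the already-existing $m$-direction maps of \cite{MW} into a diagonal
\[
\X(T(n,n{-}1))\hookrightarrow\X(T(n,n))\hookrightarrow\X(T(n,n{+}1))\simeq\X(T(n{+}1,n))\hookrightarrow\cdots,
\]
so the passage from color $n$ to color $n{+}1$ is just two steps of the old sequence followed by a Reidemeister equivalence. Every arrow is already a stable equivalence in the relevant $q$-degree once the $m$-direction bound is sharp enough; no cofiber analysis is needed. The one technical input is a slight sharpening of the \cite{MW} bound from $m\geq n$ to $m\geq n{-}1$ (proved in the Appendix via a crossing count), without which stabilization does not kick in at $m=j{-}1$ and the identification with $\X^{(j-1)^2}(T(j,j{-}1))$ fails. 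Your claim that this identification follows ``directly'' from \cite{MW} overlooks this: the original bound only guarantees stabilization at $m=j$.
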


For a more detailed visual representation of the statement of Theorem \ref{n to infty}, see Table \ref{Unknot colored table} in Section 6.  The proof of Theorem \ref{n to infty} will use much simpler stable homotopy equivalences than the maps used to prove Theorem \ref{B-adequate tail}, and will also provide a sharper bound on the coloring $n$ needed for stabilization in a given $q$-degree.

This paper is arranged as follows.  In Section 2 we review the necessary background on Khovanov homology, the Khovanov homotopy type, and the categorified Jones-Wenzl projectors as constructed in \cite{Roz}.  We also set our grading conventions for Khovanov homology used throughout the paper.  In Section 3 we build the Khovanov homotopy type for arbitrary diagrams involving Jones-Wenzl projectors, proving Theorem \ref{Projectors X exists}, and then derive some simple properties for these homotopy types similar to those satisfied by the projectors themselves.  Section 4 contains a short description of quantum spin networks and colored links, allowing quick proofs of Theorems \ref{Colored X exists} and \ref{Spin Network X exists} via Theorem \ref{Projectors X exists}.  Section 5 is devoted to proving Theorem \ref{B-adequate tail}.  Finally, Section 6 contains the proof of Theorem \ref{n to infty}, with one proof from this section placed in the Appendix.

The author would like to thank: the referee of his prior paper \cite{MW} for bringing up the question of allowing $n\rightarrow\infty$ as in Theorem \ref{n to infty}; Matt Hogancamp for calling attention to the properties of linking a 1-colored unknot with a colored link; and his advisor Slava Krushkal for his continued support and advice while preparing this paper.

\section{Background}

\subsection{Our Grading Conventions for Khovanov Homology}
For the original definition of the Khovanov homology of a link, see \cite{Khov}.  We quickly summarize here the main points.  Any crossing (\crossing) in a link diagram can be resolved in one of two ways: with a 0-resolution (\vres) or with a 1-resolution (\hres).  The Khovanov chain complex $KC^{i,j}(L)$ of a link diagram $L$ is a bigraded chain complex built out of a cube of resolutions of the diagram $L$.  The generators of $KC^{i,j}(L)$ correspond to assignments of $v_+$ or $v_-$ to each circle in any given resolution.  There are several different conventions in the literature for the precise meaning of the two gradings $i$ and $j$.  In this paper, following \cite{LS} and \cite{BN}, we shall let $i$ refer to the homological grading and $j$ refer to the $q$-grading, which we define by
\begin{equation}
\label{homdeg}
\deg_h(\cdot) := \#(\text{1-resolutions}) - n^-
\end{equation}
\begin{equation}
\label{qdeg}
\deg_q(\cdot) := \#(\text{1-resolutions}) + (\#(v_+) - \#(v_-)) + (n^+-2n^-)
\end{equation}
where $n^+$ and $n^-$ denote the number of positive and negative crossings respectively in the diagram for $L$.  Under these grading conventions, the Khovanov differential increases $\deg_h$ by one and respects $\deg_q$, allowing $KC^{i,j}(L)$ to split as a direct sum over $q$-degree.  The resulting homology groups are then bigraded invariants of the link, with no shifts necessary for any Reidemeister moves on the diagram used.  In what follows, the $q$-grading normalization shift $n^+-2n^-$ will often be denoted by $N$.

\subsection{The Khovanov Homotopy Type}
Given a link $L$ in $S^3$, we shall let $\X(L)=\bigvee_{j\in\mathbb{Z}} \X^j(L)$ denote the Khovanov homotopy type of the link $L$.  For the full description of this invariant, see \cite{LS}.  We summarize the important points about $\X(L)$ here:
\begin{itemize}
\item $\X(L)$ is the suspension spectrum of a CW complex.
\item $\tilde{H}^i(\X^j(L))=Kh^{i,j}(L)$, the Khovanov homology of the link $L$ (see Equations \ref{homdeg} and \ref{qdeg} for our grading conventions).
\item Each $\X^j(L)$ is constructed combinatorially using the Khovanov chain complex $KC^j(L)$ in $q$-degree $j$, together with a choice of ``Ladybug Matching'' that uses the diagram for $L$ (see section 5.4 in \cite{LS}).  Note that since $KC^j(L)$ is nontrivial for only finitely many $q$-degrees $j$, the wedge sum above is actually finite.
\item Each $\X^j(L)$ is an invariant of the link $L$.  That is to say, the stable homotopy type of $\X^j(L)$ does not depend on the diagram used to portray $L$, nor on the various choices that are made during the construction.
\item Non-trivial Steenrod square operations on $\tilde{H}^i(\X^j(L))=Kh^{i,j}(L)$ can serve to differentiate links with isomorphic Khovanov homology \cite{LS2} and also give rise to slice genus bounds \cite{LS3}.  One corollary of the work in \cite{MW} gives the existence of non-trivial $Sq^2$ for infinitely many 3-strand torus links.  See further calculations in \cite{LOS}.
\end{itemize}

The most important property of $\X(L)$ for our purposes comes from the following `Collapsing Lemma', a slight generalization of that appearing in section 2.2 of \cite{MW}.  Fixing $j\in\mathbb{Z}$, we consider the Khovanov chain complex $KC(L)$ represented as the mapping cone of a chain map:
\begin{equation}
\label{Lemma3.32format}
KC^{j+N_L}(L) = \big(KC^{j+N_{L''}}(L'') \longrightarrow KC^{j-1+N_{L'}}(L')\big)
\end{equation}
where $L'$ and $L''$ are the links resulting from taking the 1-resolution and 0-resolution, respectively, of a single crossing in the diagram for $L$.  The superscripts stand for $q$-gradings, with $N_L$ denoting the $q$-degree normalization shift $n^+-2n^-$ in the link diagram $L$, and similarly for $N_{L'}$ and $N_{L''}$.  There is a corresponding cofibration sequence of homotopy types (see Theorem 2 in \cite{LS}):
\begin{equation}
\label{Lemma3.32onX}
\Sigma^a\X^{j+N_{L''}}(L'')\hookrightarrow\X^{j+N_L}(L)\twoheadrightarrow\Sigma^b\X^{j-1+N_{L'}}(L')
\end{equation}
where the $\Sigma$ stands for suspensions allowing for shifts in homological degree, with  $a=n^-_L-n^-_{L''}$ and $b=n^-_L-n^-_{L'}-1$, the differences in the count of negative crossings $n^-$ for the various diagrams (the extra $-1$ for $L'$ takes into account the loss of a 1-resolution from the point of view of $L'$).  See Equations \ref{homdeg} and \ref{qdeg} above to clarify the grading shifts.

\begin{lemma}
\label{Lemma3.32}
With $KC^{j+N_L}(L) = \big(KC^{j+N_{L''}}(L'') \longrightarrow KC^{j-1+N_{L'}}(L')\big)$ as above, we have:
\begin{itemize}
\item If $KC^{j-1+N_{L'}}(L')$ is acyclic, then the induced inclusion $\Sigma^a\X^{j+N_{L''}}(L'')\hookrightarrow\X^{j+N_L}(L)$ is a stable homotopy equivalence.
\item If $KC^{j+N_{L''}}(L'')$ is acyclic, then the induced surjection $\X^{j+N_L}(L)\twoheadrightarrow\Sigma^b\X^{j-1+N_{L'}}(L')$ is a stable homotopy equivalence.
\end{itemize}
\end{lemma}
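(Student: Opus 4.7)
The plan is to invoke Whitehead's theorem for finite CW spectra together with the identification $\tilde H^i(\X^k(M))\cong Kh^{i,k}(M)$ already recorded as a basic property of the Lipshitz--Sarkar homotopy type. The key observation is that acyclicity of $KC^k(M)$ in some $q$-grading $k$ translates directly into triviality of the reduced cohomology of the finite CW spectrum $\X^k(M)$, and hence, since each $\X^k(M)$ is a suspension spectrum of a finite CW complex, into stable contractibility.

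First, I would apply $\tilde H^*$ to the cofibration sequence~(\ref{Lemma3.32onX}) to obtain a long exact sequence whose three terms are the Khovanov cohomology groups $Kh^{*,j+N_{L''}}(L'')$, $Kh^{*,j+N_L}(L)$, and $Kh^{*,j-1+N_{L'}}(L')$, up to the suspension shifts $a$ and $b$. By Theorem~2 of \cite{LS}, this topological long exact sequence agrees with the algebraic long exact sequence of the mapping cone decomposition~(\ref{Lemma3.32format}). In the first case, where $KC^{j-1+N_{L'}}(L')$ is acyclic, the third term of this long exact sequence vanishes, so the inclusion induces isomorphisms on every $\tilde H^i$; Whitehead's theorem for finite CW spectra then promotes it to a stable homotopy equivalence. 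The second case is entirely symmetric: acyclicity of $KC^{j+N_{L''}}(L'')$ kills the first term of the long exact sequence, forcing the surjection to be a cohomology isomorphism and therefore a stable homotopy equivalence.

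The only real obstacle will be verifying that the topological connecting map in the long exact sequence of~(\ref{Lemma3.32onX}) matches, under $\tilde H^*(\X^k)\cong Kh^{*,k}$, the chain-level differential of the algebraic mapping cone appearing in~(\ref{Lemma3.32format}). This compatibility between the Lipshitz--Sarkar construction and the resolution-cube decomposition of Khovanov homology is precisely the content built into Theorem~2 of \cite{LS}, so no genuinely new work is required here. After that identification, the lemma reduces to the standard fact that in a cofibration sequence of finite CW spectra, a stably contractible end term forces the remaining map to be a stable homotopy equivalence.
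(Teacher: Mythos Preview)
Your argument is correct and is essentially the content of the references the paper cites (Lemma~3.32 and Theorem~2 of \cite{LS}, together with the sketch in \cite{MW}): apply cohomology to the cofibration sequence, kill one end by acyclicity, and invoke Whitehead for finite CW spectra. One small simplification: you do not actually need to identify the topological connecting map with the algebraic one---once you know $\tilde H^*(\X^k(M))\cong Kh^{*,k}(M)$ termwise, acyclicity of the relevant $KC$ forces the corresponding $\tilde H^*$ to vanish, and the long exact sequence of the cofibration then yields the desired isomorphism regardless of what the connecting map is.
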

\begin{proof}
See the brief sketch in \cite{MW}, which describes the first case for positive crossings.  Both cases are special cases of Lemma 3.32 in \cite{LS}, presented as in Theorem 2 from the same paper.
\end{proof}

Lemma \ref{Lemma3.32} says that we can resolve crossings in a diagram one a time, and if one resolution of a crossing results in a diagram with acyclic chain complex in the specified $q$-degree, this entire part of the full chain complex can be collapsed and we are left with the chain complex using only the other resolution (up to some potential suspensions).  Just as in \cite{MW}, we will want to make repeated use of this idea here.

\subsection{A Categorified Jones-Wenzl Projector}

In the Temperley-Lieb algebra $TL_n$ on $n$ strands over coefficient field $\C(q)$, we have a special idempotent element $P_n$ characterized by the following axioms:
\begin{enumerate}[I.]
\item $P_n\cdot e_i = e_i\cdot P_n = 0$ for any of the standard multiplicative generators $e_i=\TLei\in TL_n$.  This is often described by stating that $P_n$ is ``killed by turnbacks''.
\item The coefficient of the $n$-strand identity tangle in the expression for $P_n$ is 1.
\end{enumerate}
(For the original definition of the $P_n$, see \cite{Wenzl}; for an account of the Temperley-Lieb algebra, the $P_n$, and some of their uses in 3-manifold theory, see \cite{KL}.)

In \cite{Roz} Lev Rozansky provided a categorification for any $P_n$ via an infinite torus braid.  If we let $\sigma_1,\dots,\sigma_{n-1}$ denote the standard generators of the braid group $B_n$, we introduce the following notation for full twists on $n$ strands:
\begin{equation}
\label{T notation}
\T^k_{n}:=(\sigma_1\sigma_2\cdots\sigma_{n-1})^{nk}.
\end{equation}
After giving a well-defined notion for a stable limit of chain complexes, Rozansky proved the following theorem.

\begin{theorem}[Rozansky]
\label{Roz Inf Twist}
The Khovanov chain complexes associated to the braids $KC(\T_n^k)$ stabilize up to chain homotopy as $k\rightarrow\pm\infty$.  The limiting complex $KC(\T_n^{\pm\infty})$ satisfies the following properties:
\begin{enumerate}[I.]
\item Adding a turnback onto the top or bottom of $KC(\T_n^{\pm\infty})$ causes the entire complex to be chain homotopic to a trivial complex.
\item The resulting complex can be viewed as a mapping cone of a map from (for $+\infty$) or to (for $-\infty$) the 1-term complex of the identity tangle, where the other terms involve only non-identity tangles in non-zero homological degrees.
\end{enumerate}
\end{theorem}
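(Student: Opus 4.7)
The plan is to realize $KC(\T_n^{\pm\infty})$ as the homotopy colimit of the finite twist complexes along an explicit sequence of structure maps, and then extract properties I and II from the combinatorial shape of that limit. Throughout I would work inside Bar-Natan's cobordism category for tangles, where the Khovanov complex of a braid on $n$ strands is built from flat planar tangles on $2n$ endpoints.

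First I would construct canonical chain maps $\alpha_k\colon KC(\T_n^k)\to KC(\T_n^{k+1})$ (for the $+\infty$ case; the arrows reverse for $-\infty$). To do so, examine the Khovanov cube of a single full twist $\T_n^1$: after Bar-Natan delooping and Gaussian elimination on the crossings, the cube splits off the flat identity tangle $\I_n$ in a single extremal homological degree, while every remaining summand factors through some standard generator $e_i$ and occupies a bounded range of strictly smaller homological degrees. This produces a canonical inclusion $\I_n\hookrightarrow KC(\T_n^1)$, and tensoring with $KC(\T_n^k)$ defines $\alpha_k$.

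Next I would verify stabilization by tracking homological support. Because each additional full twist shifts the support of the non-$\I_n$ summands relative to the $\I_n$ summand by an amount that grows linearly in $k$, in any fixed homological window $[-M,M]$ the map $\alpha_k$ becomes a chain isomorphism once $k$ is sufficiently large (depending on $M$). Hence the direct system $\{KC(\T_n^k),\alpha_k\}$ stabilizes up to chain homotopy degree-by-degree, producing a well-defined colimit $KC(\T_n^{+\infty})$. The identity-vs-turnback splitting passes to the limit, presenting $KC(\T_n^{+\infty})$ as a mapping cone between the one-term complex $\I_n$ (in homological degree $0$) and a complex whose nonzero terms all factor through some non-identity flat tangle and are supported in strictly positive homological degrees. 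This is exactly property II.

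For property I, I would use that attaching a turnback $e_i$ to the top or bottom of $\I_n$ yields a non-identity flat tangle, which may be absorbed into the ``non-identity'' side of the mapping cone above. Every remaining summand already factors through some $e_j$, so adjoining a further turnback allows one to apply Reidemeister II cancellations combined with iterated Gaussian elimination. The homological bounds established in the previous step guarantee that this cascade converges in the appropriate categorical sense, producing an explicit chain contraction of the turnbacked complex. The main obstacle I anticipate is precisely pinning down the linear growth of the homological support of non-identity summands under iterated full twists, and justifying convergence of the infinite sequence of eliminations; once that bookkeeping is secure, both advertised properties follow, and the sign choice $\pm\infty$ dictates which direction the structure maps and corresponding mapping cone point.
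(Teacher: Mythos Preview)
The paper does not actually prove this theorem: its entire ``proof'' is a citation to Rozansky's original papers \cite{Roz} and \cite{Roz2}. So there is no argument in the paper to compare against; you are sketching Rozansky's proof itself.

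Your outline for stabilization and for property II is broadly in line with Rozansky's approach: one works in Bar-Natan's tangle category, uses that the all-zero (resp.\ all-one) resolution of a full twist is the identity tangle sitting in extremal homological degree while all other resolutions factor through some $e_i$, and obtains structure maps together with the cone presentation. The key analytic input is exactly the linear growth of the homological shift of the non-identity part under iterated twisting, which you correctly flag as the main point to pin down.

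Your argument for property I, however, has a genuine gap. Knowing that every term of the turnbacked complex factors through some $e_j$ is not by itself enough to produce a contraction via ``Reidemeister II cancellations combined with iterated Gaussian elimination''; a semi-infinite complex of non-identity tangles need not be null-homotopic. Rozansky's actual mechanism is geometric: the isotopy $e_i\cdot \T_n^k \simeq \T_{n-2}^k\cdot e_{i'}$ (pulling the turnback through the full twists) reduces the number of strands while introducing a definite grading shift. Combining that shift with the same homological support estimate used for stabilization shows, by induction on $n$, that the turnbacked limiting complex has no terms in any fixed degree, hence is contractible. Your sketch misses this reduction-to-fewer-strands step and the accompanying induction; without it the ``cascade of eliminations'' does not obviously converge to zero.
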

\begin{proof}
See \cite{Roz}, and also section 1.6 in \cite{Roz2}.
\end{proof}

This theorem means that, to obtain a chain complex categorifying the Jones-Wenzl projectors up through a given homological degree, it is enough to replace any $P_n$ in a diagram with a copy of $\T_n^{\pm k}$ for large $k$ (we shall often refer to this as a `finite-twist approximation').  The exact size of $k$ needed depends on the homological degree we are interested in.  The graded Euler characteristic of this complex stabilizes as $k\rightarrow\infty$ to give a power series representation of the rational terms appearing in the usual formulas for the $P_n$.  Positive (right-handed) twisting gives a power series in $q$, while negative (left-handed) twisting gives a power series in $q^{-1}$.

\begin{remark}
\label{CKs and Khov cat}
At around the same time, Ben Cooper and Slava Krushkal independently constructed a categorification of the Jones-Wenzl projectors in \cite{CK}.  We are unsure if it is possible to lift their construction to the Khovanov homotopy type; see \cite{LOS} for some further remarks.  Also, in \cite{Khov2}, Mikhail Khovanov introduced separate categorifications for the colored Jones polynomial using renormalizations to eliminate the denominators present in the terms of the Jones-Wenzl projectors.  Our approach here aims to recover Rozansky's version outlined above rather than these alternative categorifications (although the categorified projectors in \cite{CK} are chain homotopic to those produced in \cite{Roz}; see section 3 of \cite{CK}).
\end{remark}

The first goal of this paper is to properly lift Theorem \ref{Roz Inf Twist} to a similar statement about Khovanov homotopy types.  However, Theorem \ref{Roz Inf Twist} is a statement about complexes of tangles.  As mentioned in the introduction, the Khovanov homotopy type has not yet been defined for tangles in general, and only exists for links.  This is the reason for the slightly indirect phrasing of Theorem \ref{Projectors X exists}, which can be viewed as a statement about having Khovanov homotopy types for Jones-Wenzl projectors that are closed up in any fashion in $S^3$.  We turn now to the proof of Theorem \ref{Projectors X exists}.

\section{A Khovanov Homotopy Type For Diagrams Involving Jones-Wenzl Projectors}

\subsection{Basic Notations and a Key Counting Lemma}

We begin with some general notation for use throughout this section.

\begin{itemize}
\item $n\in\N$ will always denote a number of strands for various purposes (typically for a given torus braid or, later, for an $n$-strand cabling of a link diagram).
\item Boldface capital letters will refer to braids and/or tangles within a diagram.
\item $\I_n$ will denote the identity braid on $n$ strands.
\item $\T_n^k$ will denote a torus braid on $n$ strands with $k$ full right-handed (positive) twists (see Equation (\ref{T notation})).
\item $\T_n^{-k}$ will denote such a torus braid with $k$ full left-handed (negative) twists.
\item $\Z$ will often be used to denote an arbitrary tangle.
\item We will use the inner product notation $\la \Z_1,\Z_2 \ra$ to indicate connecting two tangles top to top and bottom to bottom.  This notation is meant to imitate the inner product in the Temperley-Lieb algebra.  See Figure \ref{TZcap}.
\item $\Z^{\cap i}$ will be used to indicate that the $i^{\text{th}}$ and $(i+1)^{\text{st}}$ strands at the top of the tangle $\Z$ are being capped off.  Similarly, $\Z_{\cup i}$ will indicate capping off the $i$ and $i+1$ strands at the bottom.  See Figure \ref{TZcap}.
\end{itemize}

\begin{figure}[h]
\centering
\vspace{.1in}\includegraphics[scale=.2]{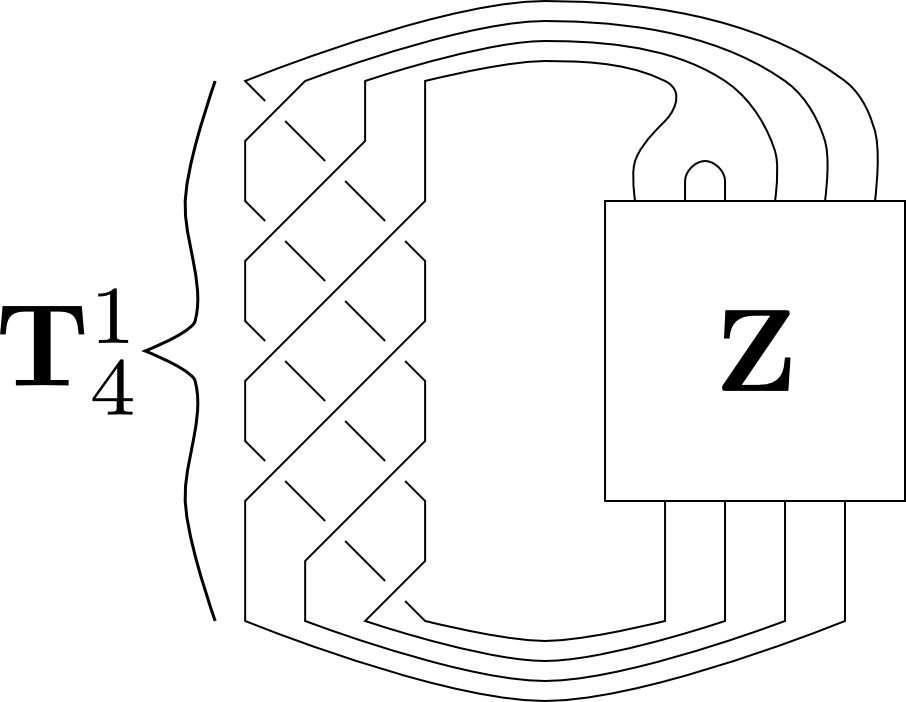}
\caption{The diagram $\la \T_4^1 , \Z^{\cap 2}\ra$.  $\T_4^1$ indicates the full right-handed twist on 4 strands, and $\Z$ is some fixed $(6,4)$-tangle.  The $\cap 2$ indicates a cap on the 2nd and 3rd strands above $\Z$.}
\vspace{.1in}
\label{TZcap}
\end{figure}

In many link diagrams in this paper, a single copy of $\T_n^{\pm k}$ will be singled out for consideration, allowing the diagram to be viewed as $\la \T_n^{\pm k} , \Z\ra$ for some tangle $\Z$ (similar to Figure \ref{TZcap}, but without the cap).  In these situations, we will also view the normalization shifts of Equations (\ref{homdeg}) and (\ref{qdeg}) as split into contributions based on the $\T_n$ and the $\Z$ as in the following definition.

\begin{definition}
\label{tau N def}
In a link diagram $L$ viewed as $L = \la \T_n^{\pm k} , \Z \ra$ as above, the symbol $\tau$ will be used to denote the $q$-normalization shift $n^+-2n^-$ counting only crossings within \emph{one full twist} of the $n$ strands (that is, within $\T_n^{\pm 1}$).  Similarly, the symbol $\eta$ will be used to denote the homological normalization shift $n^-$ counting only crossings within \emph{one full twist}.  The symbol $N_\Z$ will be used to denote the $q$-normalization shift $n^+-2n^-$ counting only crossings within the tangle $\Z$.  More generally, $N_D$ will denote the shift $n^+-2n^-$ counting crossings within a diagram $D$ (whether tangle or otherwise).
\end{definition}

We will have no need for the homological normalization shift $n^-$ counting only crossings in $\Z$.

\begin{remark}
\label{tau N rmk}
Notice that these shifts $\tau$, $\eta$ and $N_\Z$ depend on the orientation of the strands, and allowable orientations are affected by the \emph{full link diagram involved}, not just the piece being counted.  In Figure \ref{TZcap} for example, the value of $\tau$ depends heavily on the tangle $\Z$, despite the fact that it only counts crossings within the $\T_4^1$.  In cases where the tangle $\Z$ is changing, subscripts will be attached to the symbol $\tau$ as necessary to indicate which full diagram is being considered.  Similarly, if there are multiple $\T_{n_i}$ to consider within some single link diagram, the subscript $i$ will be used for the shifts $\tau_i$ and $\eta_i$ to indicate which twist is being considered.
\end{remark}

In order to illustrate these notations, we prove the following very simple observation about full twists that indicates why they are preferable to work with (as opposed to the fractional twists that were sufficient in \cite{MW}).

\begin{lemma}
\label{NZlemma}
For any $(n,n)$-tangle $\Z$, consider the diagram $D(k):=\la \T_n^{k} , \Z \ra$.  Then all of the $D(k)$ are links with the same number of strands, which can be oriented equivalently for all $k$.  Thus $N_{D(k)}=k\tau+N_\Z$ with $N_\Z$ and $\tau$ independent of $k$ (in particular, $N_\Z$ can be determined by the diagram $D(0)=\la \I,\Z \ra$).  Similarly for such a diagram, $\eta$ is also independent of $k$.
\end{lemma}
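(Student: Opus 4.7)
The plan is to exploit the fact that $\T_n^1 = (\sigma_1\sigma_2\cdots\sigma_{n-1})^n$ is a pure braid, i.e., a central element of $B_n$ lying in the kernel of the natural homomorphism $B_n \to S_n$. My first step would be to observe that consequently $\T_n^k$ induces the identity permutation on strands for every $k$: the $i$-th top endpoint of $\T_n^k$ is connected through the braid to the $i$-th bottom endpoint. Therefore the combinatorial pattern by which $\T_n^k$ glues the $2n$ boundary points of $\Z$ into closed loops is the same as for $\T_n^0 = \I_n$. In particular, $D(k)$ and $D(0) = \la \I_n, \Z \ra$ have the same number of link components, with the same partition of the arcs of $\Z$ among components.

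Next I would argue that orientations of the $D(k)$ are in canonical bijection for varying $k$. An orientation of $D(0)$ restricts to an orientation of $\Z$ whose induced directions at the $2n$ boundary points are compatible with the trivial strands of $\I_n$. Since $\T_n^k$ pairs up the same top and bottom boundary points as $\I_n$, the same boundary directions extend uniquely to an orientation of $D(k)$, by orienting each strand of $\T_n^k$ consistently with its two endpoints. This produces coherently oriented diagrams $D(k)$ for all $k$ simultaneously, determined entirely by a single orientation of $\Z$.

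For the normalization counts, the crossings of $D(k)$ split into those inside $\Z$ (contributing $N_\Z$, independent of $k$) and those distributed among the $k$ stacked copies of $\T_n^1$ that make up $\T_n^k$. The key observation is that along any single strand of $\T_n^k$ the orientation never reverses, so the $n$ strands have the same orientations when read off at every horizontal slice. Hence the $k$ copies of $\T_n^1$ inside $\T_n^k$ are identical not just as unoriented diagrams but as oriented diagrams, and each contributes the same number of positive and negative crossings. Summing gives $N_{D(k)} = N_\Z + k\tau$ and shows that $\eta$, counting negative crossings within one twist, depends only on a single copy of $\T_n^1$ and is independent of $k$.

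The main (and essentially only) subtle point is the orientation-extension step: one must verify that the boundary data coming from $\Z$ is always compatible with the strand structure of $\T_n^k$, and moreover that corresponding crossings in different copies of $\T_n^1$ carry literally the same sign. Both follow immediately from the purity of $\T_n^k$, since strands do not swap places and the orientation propagates unchanged from top to bottom through the braid.
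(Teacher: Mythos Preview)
Your proof is correct and follows essentially the same approach as the paper: both arguments hinge on the observation that a full twist $\T_n^1$ is a pure braid (sending the $i^{\text{th}}$ top point to the $i^{\text{th}}$ bottom point), so that orientations on $\Z$ extend identically for all $k$ and each of the $k$ copies of $\T_n^1$ contributes the same $\tau$ and $\eta$. Your version is more explicit about the orientation-extension step and the stacking of copies, but the underlying idea is the same.
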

\begin{proof}
In a full twist, any strand takes the $i^\text{th}$ point at the top to the $i^\text{th}$ point on the bottom, so for the purposes of counting and orienting the strands, this is equivalent to the identity braid $\I$.  The orientations of the strands are all that matters for calculating $N_\Z$, and also for calculating $\tau$ and $\eta$.  Since $\tau$ counts positive and negative crossings for one full twist, $k$ full twists will contribute $k\tau$.
\end{proof}

\begin{remark}
\label{NZlemma neg} The previous observation was written and notated for positive full twists, but it is clear that the exact same argument holds for negative full twists as well.  This will be typical of several of the arguments later in this section.
\end{remark}

We conclude this section with the key counting lemma which is used essentially throughout the paper.  This lemma can be viewed as a generalization of Lemma 3.5 in \cite{MW}, which itself was just a restatement of Marko Sto\v{s}i\'{c}'s Lemma 1 in \cite{Sto}.

\begin{lemma}
\label{Counting lemma}
Fix $n\geq 2$ in $\N$.  Then for any $i\in\{1,2,\dots,n-1\}$ and for any $(n-2,n)$-tangle $\Z$, consider the link diagram $D_\pm = \la (\T_n^{\pm k})^{\cap i} , \Z \ra$.  That is, consider any closure of $\T_n^{\pm k}$ involving at least one turnback at the top.  Then for any chosen orientation of the strands we have:
\begin{itemize}
\item This link diagram is isotopic to $D'_\pm = \la \T_{n-2}^{\pm k} , \Z_{\cup n-i} \ra$
\item Letting $\tau_\pm$ count $n^+-2n^-$ for crossings from $\T_n$ in $D_\pm$ and letting $\tau'_\pm$ count this shift for crossings from $\T_{n-2}$ in  $D'_\pm$, we have
\begin{equation}
\label{pos tau'}
\tau'_+ = \tau_+ +2n
\end{equation}
\begin{equation}
\label{neg tau'}
\tau'_- = \tau_- +2n - 6
\end{equation}
\end{itemize}
\end{lemma}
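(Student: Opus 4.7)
My plan is to establish the lemma in two stages: first prove the isotopy $D_\pm \simeq D'_\pm$, then verify the numerical identities for $\tau'_\pm$. For the isotopy, I would use that $\T_n^{\pm 1}$ is central in the braid group $B_n$ and geometrically implements a full $2\pi$ rotation of the disk containing the $n$ strands. A cap at top positions $i, i+1$ of $\T_n^{\pm k}$ joins two adjacent strands into a single ``cable'' which, thanks to this centrality, can be slid around the exterior of the braid rather than through its interior, leaving the remaining $n-2$ strands still undergoing $\T_{n-2}^{\pm k}$ among themselves. Tracking where the cap ultimately lands at the bottom requires care: the reflection inherent in sliding around the outside of a full twist produces the final cap at bottom positions $n-i, n-i+1$. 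I would verify this first in the base case $k=1$ by an explicit braid-word manipulation and then extend to general $k$ using iteration together with the centrality of $\T_n^1$.

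For the counting identities, note that the cap at the top forces the two capped strands to have opposite orientations within $D_\pm$; the remaining $n-2$ strands retain their orientations in passing to $D'_\pm$. So if $\T_n$ carries $k_u$ up-oriented and $k_d = n - k_u$ down-oriented strands, then $\T_{n-2}$ inherits $k_u - 1$ up- and $k_d - 1$ down-oriented strands. In one positive full twist, each same-oriented pair of strands contributes $+2$ to $n^+ - 2n^-$ (via two positive crossings) while each opposite-oriented pair contributes $-4$ (via two negative crossings), yielding
\[
\tau_+ = k_u(k_u-1) + k_d(k_d-1) - 4 k_u k_d,
\]
and the analogous formula for $\tau'_+$ uses $k_u - 1$ and $k_d - 1$ in place of $k_u$ and $k_d$. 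A short combinatorial computation then gives $\tau'_+ - \tau_+ = 2n$. The negative-twist case swaps the roles of positive and negative crossings, producing
\[
\tau_- = -2 k_u(k_u-1) - 2 k_d(k_d-1) + 2 k_u k_d,
\]
and the analogous subtraction yields $\tau'_- - \tau_- = 2n - 6$.

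The main obstacle is clearly the isotopy step, and in particular correctly identifying the final cap position as $n-i$ rather than some other index. Once the isotopy is pinned down, the counting reduces to elementary algebra driven by the orientation bookkeeping above. A potential subtlety is that pulling the cap around the braid may introduce additional crossings outside of $\T_{n-2}$; these affect the writhe of $\Z_{\cup n-i}$ but do not enter into the per-twist shifts $\tau'_\pm$, so the identities as stated remain unaffected by any such framing adjustments.
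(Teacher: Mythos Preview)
Your isotopy argument is essentially the same as the paper's: both pull the cap through (or around) the full twists, using that a full twist returns each strand to its starting position, and both identify the resulting cup position as $n-i$ via the reflection inherent in swinging around the outside. The paper is equally informal here, so your sketch is adequate.

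Your counting argument, however, takes a genuinely different route. The paper never tracks orientations explicitly. Instead it observes that pulling the cap through eliminates $n(n-1)-(n-2)(n-3)=4n-6$ crossings per full twist; two of these come from Reidemeister~1 moves on the self-twisting of the two-strand cable, and the remaining $4n-8$ from Reidemeister~2 moves. Each Reidemeister~2 removes one positive and one negative crossing regardless of orientation, contributing $+1$ to $n^+-2n^-$, while the two Reidemeister~1 curls are always negative in a positive twist (and positive in a negative twist), again independent of how the strands are oriented. Summing gives $\tau'_+-\tau_+ = (4n-8)\cdot 1 + 2\cdot 2 = 2n$ per twist, wait, let me recompute: each R2 removes one $+$ and one $-$, so $\Delta(n^+-2n^-) = -1 -(-2) = +1$; two R1's removing negatives give $\Delta = -2\cdot(-2) = +4$; total $(2n-4)+4 = 2n$. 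For the negative twist the two R1's remove positives, giving $(2n-4)-2 = 2n-6$.

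Your approach instead parametrizes by $(k_u,k_d)$ and computes $\tau_\pm$ directly as a quadratic in these, then subtracts. This is correct and has the virtue of being purely algebraic once one accepts that each pair of strands crosses exactly twice in a full twist with sign determined by relative orientation. The paper's approach has the complementary virtue of making the orientation-independence manifest from the outset and tying the computation directly to the isotopy moves themselves. Either is fine; your final remark about possible extra crossings affecting $N_{\Z_{\cup n-i}}$ is unnecessary here (the cap routes around the outside of $\Z$ without introducing any), but you are right that it would not disturb the $\tau$ identities in any case.
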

\begin{proof}
We pull the turnback through the full twists, which corresponds to pulling out two `parallel' strands wrapping around the cylinder defining the torus braid.  As in Lemma \ref{NZlemma}, using full twists ensures that the turnback `exits' the torus braid at the same two points that it entered, which swing around to give the $(n-i)^{\text{th}}$ and $(n-i+1)^{\text{st}}$ points at the bottom of $\Z$.  This leaves us with $n-2$ strands for the torus braid, still with the same amount of twisting.  See Figure \ref{CountingLemmaPic}.  This proves the first point.

\begin{figure}[h]
\centering
\vspace{.1in}\includegraphics[scale=.3]{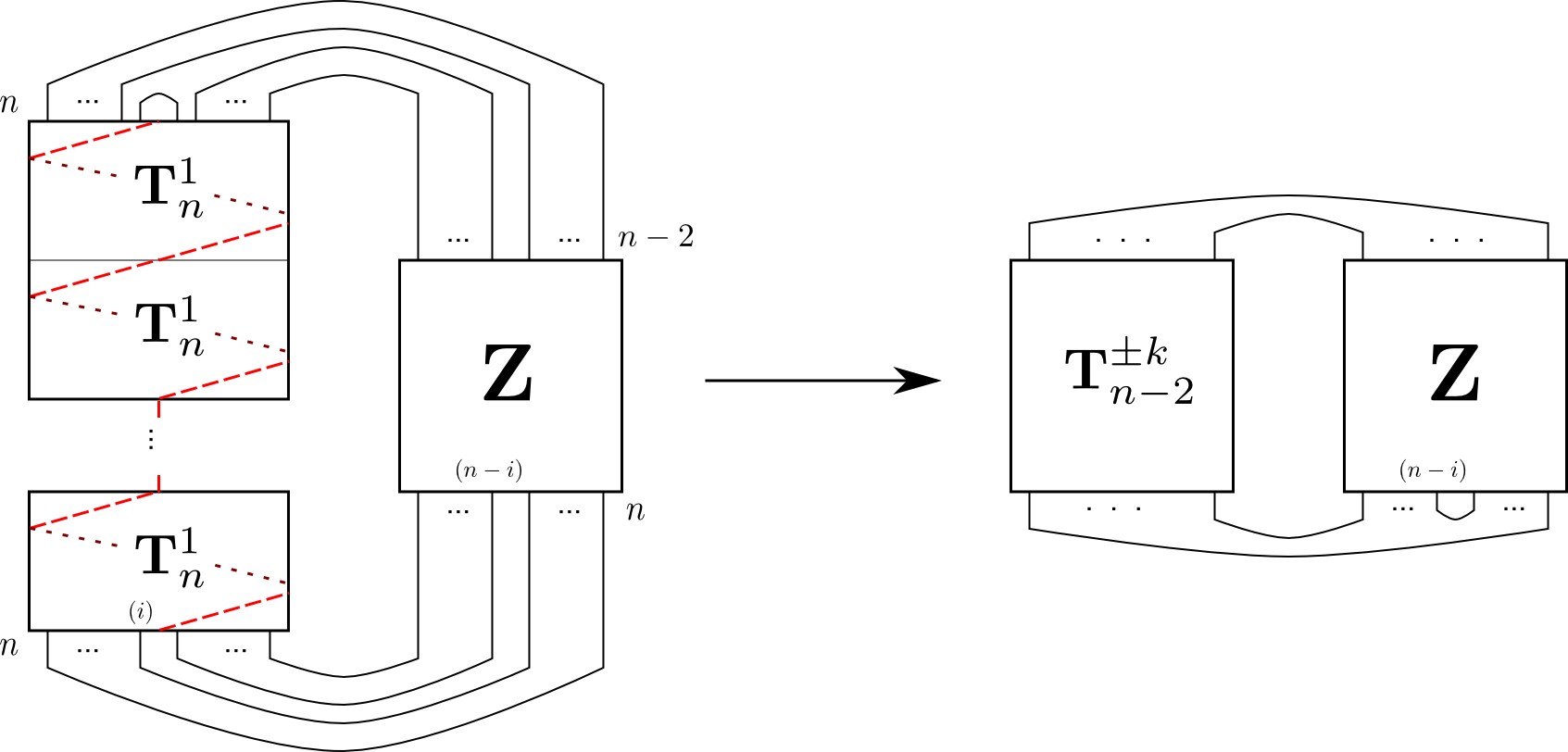}
\caption{The diagram $\la (\T_n^{\pm k})^{\cap i} , \Z \ra$ with the $\T_n^{\pm k}$ drawn as separate $\T_n^1$'s.  The cap is pulled through the twists as shown (the dashed red line would be for $+k$; the opposite direction would be taken for $-k$).  The $n$ and $n-2$ show the number of strands entering and exiting at various points.  The $(i)$ at the bottom of the twisting indicates the $i^\text{th}$ strand counted from the left, and similarly for the $(n-i)$ at the bottom of $\Z$.}
\vspace{.1in}
\label{CountingLemmaPic}
\end{figure}

To prove the second point, we first note that the total number of crossings in a full twist on $n$ strands is $n(n-1)$, while the total number for a full twist on $n-2$ strands is $(n-2)(n-3)$.  This means that when pulling the turnback through, we managed to eliminate $4n-6$ crossings.  One full twist of these two strands corresponds to two Reidemeister 1 moves; the other $4n-8$ eliminations all must have come from Reidemeister 2 moves.  Regardless of the type of twist and the orientation of the strands, all of these Reidemeister 2 moves would have eliminated one positive and one negative crossing each.  The two Reidemeister 1 moves would have eliminated negative crossings from a positive twist, or eliminated positive crossings from a negative twist.  Again, this is independent of the orientation of the strands.  Calculating the effect of these eliminations on the normalization $n^+-2n^-$ gives the result.
\end{proof}

\begin{remark}
There is no difference in having the turnback at the bottom of the $\T_n^{\pm k}$.  The proof makes it clear that it ends up at the top of the $\Z$ in that case.
\end{remark}

\subsection{Proving Theorem \ref{Projectors X exists}}

Let $D$ denote a link diagram involving a finite number of Jones-Wenzl projectors.  More precisely, $D$ is obtained from a link diagram by formally replacing a finite number of identity braids $\I_{n_i}$ with Jones-Wenzl projectors $P_{n_i}$.  (Figure \ref{GenWenzEx} in the introduction provides clarification).  Just as in section 7 of \cite{MW}, we would like to define $\X^j(D)$ as the homotopy colimit of a sequence of homotopy types of finite link diagrams that stabilizes as the twisting in the diagram goes to infinity.  To do this we focus on a single projector at a time.  Towards that end, we combine Lemmas \ref{Lemma3.32} and \ref{NZlemma} to establish the following two sequences.

\begin{proposition}
\label{building seq}
Fix $n\in\N$ and $j\in\mathbb{Z}$.  Let $\Z$ be an arbitrary $(n,n)$-tangle.  Then the maps of Lemma \ref{Lemma3.32} provide the following two sequences (one for right-handed twists, one for left-handed twists):
\begin{equation}
\label{pos seq}
\X^{j+N_\Z}\left( \la \T_n^0 , \Z \ra \right) \hookrightarrow
\Sigma^{-\eta}\X^{j+N_\Z+\tau}\left( \la \T_n^1 , \Z \ra \right) \hookrightarrow \cdots \hookrightarrow
\Sigma^{-k\eta}\X^{j+N_\Z+k\tau}\left( \la \T_n^k , \Z \ra \right) \hookrightarrow \cdots
\end{equation}
\begin{equation}
\label{neg seq}
\X^{j+N_\Z}\left( \la \T_n^0 , \Z \ra \right) \twoheadleftarrow \cdots \twoheadleftarrow
\Sigma^{k(-\eta+n(n-1))}\X^{j+N_\Z+k\tau+kn(n-1)}\left( \la \T_n^{-k} , \Z \ra \right) \twoheadleftarrow \cdots
\end{equation}
where the symbols $\eta$, $\tau$, and $N_\Z$ are as defined in Definition \ref{tau N def}.
\end{proposition}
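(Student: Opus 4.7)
The plan is to build both sequences by iterating Lemma \ref{Lemma3.32}'s cofibration (\ref{Lemma3.32onX}) one crossing at a time through each extra full twist going from $\la\T_n^k,\Z\ra$ to $\la\T_n^{k\pm 1},\Z\ra$. The crucial input is Lemma \ref{NZlemma}, which guarantees that the strand orientations in any diagram $\la\T_n^k,\Z\ra$ are the same for every $k$, so the signs of the $n(n-1)$ crossings inside any single full twist are identical at every step of the iteration. This justifies treating $\tau$ and $\eta$ as constants across the whole sequence, and reduces the bookkeeping to a single computation of per-crossing contributions.

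For the right-handed sequence (\ref{pos seq}), I would introduce one additional full twist $\T_n^{1}$ with its $n(n-1)$ crossings between $\la\T_n^k,\Z\ra$ and $\la\T_n^{k+1},\Z\ra$ and resolve these crossings one at a time via (\ref{Lemma3.32onX}). At each crossing I take the side of the cofibration which preserves the vertical (identity) braid structure at that crossing: the $0$-resolution for a positive crossing (yielding an inclusion) and the $1$-resolution for a negative crossing. Once all $n(n-1)$ crossings have been handled, the composed chain of maps passes through the vertex of the cube of resolutions where every crossing has been smoothed vertically, which is exactly $\la\I_n\cdot\T_n^k,\Z\ra=\la\T_n^k,\Z\ra$; so we obtain the desired map into $\X(\la\T_n^{k+1},\Z\ra)$.

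Tracking the grading shifts crossing by crossing: at a positive crossing the cofibration gives $a=n^-_L-n^-_{L''}=0$ and $N_L-N_{L''}=+1$, contributing $+1$ to absolute $q$-degree and $0$ to homological suspension; at a negative crossing it contributes $-2$ to absolute $q$ and $+1$ to suspension, following the same formulas. Summing across the whole full twist gives a total absolute $q$-shift of $n^+_{\text{twist}}-2n^-_{\text{twist}}=\tau$ and a total homological suspension of $-n^-_{\text{twist}}=-\eta$, which matches (\ref{pos seq}). For the left-handed sequence (\ref{neg seq}) the same strategy applies, but the extra twist is now $\T_n^{-1}$, which swaps the roles of $0$- and $1$-resolutions at each crossing and replaces each inclusion by the corresponding surjection. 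The extra $+n(n-1)$ correction in both the $q$-grading and the homological suspension in (\ref{neg seq}) records the net effect of using the other half of (\ref{Lemma3.32onX}) at each of the $n(n-1)$ crossings, which shifts the pre-normalization parameter $j$ by $-1$ and adds $+1$ to the suspension per crossing.

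The main obstacle is handling crossings of mixed sign within a single full twist, since the direction of the relevant arrow from (\ref{Lemma3.32onX}) depends on the sign. The resolution is that Lemma \ref{NZlemma} fixes the sign pattern in every copy of $\T_n^{\pm 1}$, so a single uniform choice of which half of the cofibration to apply at each crossing works across all $k$, and the grading shifts $\tau$ and $\eta$ alone are enough to summarize the cumulative behavior. Once this is in hand, the sequences (\ref{pos seq}) and (\ref{neg seq}) are obtained by simply concatenating the one-twist-at-a-time maps.
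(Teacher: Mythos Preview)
Your overall strategy---iterate the cofibration (\ref{Lemma3.32onX}) across the $n(n-1)$ crossings of a single extra full twist, and invoke Lemma \ref{NZlemma} to make $\tau,\eta,N_\Z$ independent of $k$---is exactly what the paper does. But there is a genuine conceptual error in how you decide which side of the cofibration to take. You conflate the \emph{sign} of a crossing (positive versus negative, which depends on orientation) with its \emph{type} (its over/under pattern, which does not). The $0$- and $1$-resolutions are determined entirely by the type. In a right-handed twist $\T_n^1$ every crossing is a standard generator $\sigma_\iota$, and for every such crossing the $0$-resolution is the vertical smoothing, \emph{regardless of how the strands are oriented}. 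The paper makes this explicit: ``the all-zero resolution of $\T_n^1$ is precisely $\I_n$.'' So one takes the $0$-resolution at \emph{every} crossing of $\T_n^1$, and each step is an inclusion via the left arrow of (\ref{Lemma3.32onX}); dually, every crossing of $\T_n^{-1}$ gets the $1$-resolution, and each step is a surjection.

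Your proposal to take the $1$-resolution at negative crossings therefore fails on two counts: it would insert horizontal smoothings and hence not produce $\I_n$ after all crossings are resolved, and it would mix inclusions with surjections, yielding a zigzag rather than the directed sequence (\ref{pos seq}). The ``main obstacle'' you identify---that the arrow direction depends on the sign---is a phantom; the direction is uniform across the whole twist, fixed by the handedness of the twist alone. Lemma \ref{NZlemma} is needed only to ensure the grading shifts accumulate to exactly $\tau$ and $\eta$ per twist independently of $k$; it plays no role in pinning down arrow directions. Once this is corrected (simply take all $0$-resolutions for $\T_n^1$, all $1$-resolutions for $\T_n^{-1}$), your per-crossing grading bookkeeping is fine and the argument goes through as in the paper.
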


\begin{proof}
To build the right-handed sequence (\ref{pos seq}), we `start' with the $(k+1)^\text{st}$ term and resolve crossings within one of the full twists one at a time until we reach the $k^\text{th}$ term.  Specifically, we consider the diagram $\la \T_n^{k+1} , \Z \ra = \la \T_n^k , \T_n^1 \cdot \Z \ra$ where we use the product notation to indicate concatenation (see Figure \ref{DandEposseq}).  We number the crossings of the $\T_n^1$ sitting above $\Z$ starting from the `topmost' such crossing.  Then each inclusion in (\ref{pos seq}) will be defined as the composition of $n(n-1)$ inclusions coming from (\ref{Lemma3.32onX}) by resolving these numbered crossings as 0-resolutions in this order (note that the all-zero resolution of $\T_n^1$ is precisely $\I_n$).

We now introduce some notation similar to the notation in section 3 of \cite{MW}.  Let $\D_0:=\T_n^1\cdot\Z$.  Then for each $i=1,2,\dots,n(n-1)$, let $\D_i$ denote the diagram obtained from $\D_{i-1}$ by resolving the $i^\text{th}$ crossing with a 0-resolution, and let $\E_i$ denote the diagram obtained from $\D_{i-1}$ (\emph{not} from $\E_{i-1}$; this will change for the left-handed sequence) by resolving the $i^\text{th}$ crossing with a 1-resolution.  Thus $\D_i$ will have all crossings up to the $i^\text{th}$ resolved as 0-resolutions, while $\E_i$ will have all crossings up to the $(i-1)^\text{st}$ resolved as 0-resolutions, but the $i^\text{th}$ as a 1-resolution.  This arrangement allows us to see, at each step $i$, the cofibration sequence (ignoring the homological shifts)
\begin{equation}
\label{pos cofib seq}
\X^{j+N_{\D_i}+k\tau_{\D_i}}\left( \la \T_n^k , \D_i \ra \right) \hookrightarrow
\X^{j+N_{\D_{i-1}}+k\tau_{\D_{i-1}}}\left( \la \T_n^k, \D_{i-1} \ra \right) \twoheadrightarrow
\X^{j+N_{\E_i}+k\tau_{\E_i}-1}\left( \la \T_n^k , \E_i \ra \right).
\end{equation}
For further clarification, see Figure \ref{DandEposseq}.  Notice the subscripts on the $\tau$ terms - the orientations (and thus positive/negative crossing information) of the strands within $\T_n^k$ may change when resolving crossings (see Remark \ref{tau N rmk}).  However, we also know from Lemma \ref{NZlemma} that all of the $\tau_*$ terms and $N_*$ terms are \emph{independent of $k$}.  The final term $\D_{n(n-1)}$ is precisely $\Z$, so Lemma \ref{NZlemma} allows the $\tau$ and $N_{\Z}$ terms to be preserved as indicated in the sequence (\ref{pos seq}).  The suspensions giving the homological shifts are clear: we are counting the number of negative crossings introduced in a new twist.

\begin{figure}
\centering
%\vspace{.2in}
\includegraphics[scale=.2]{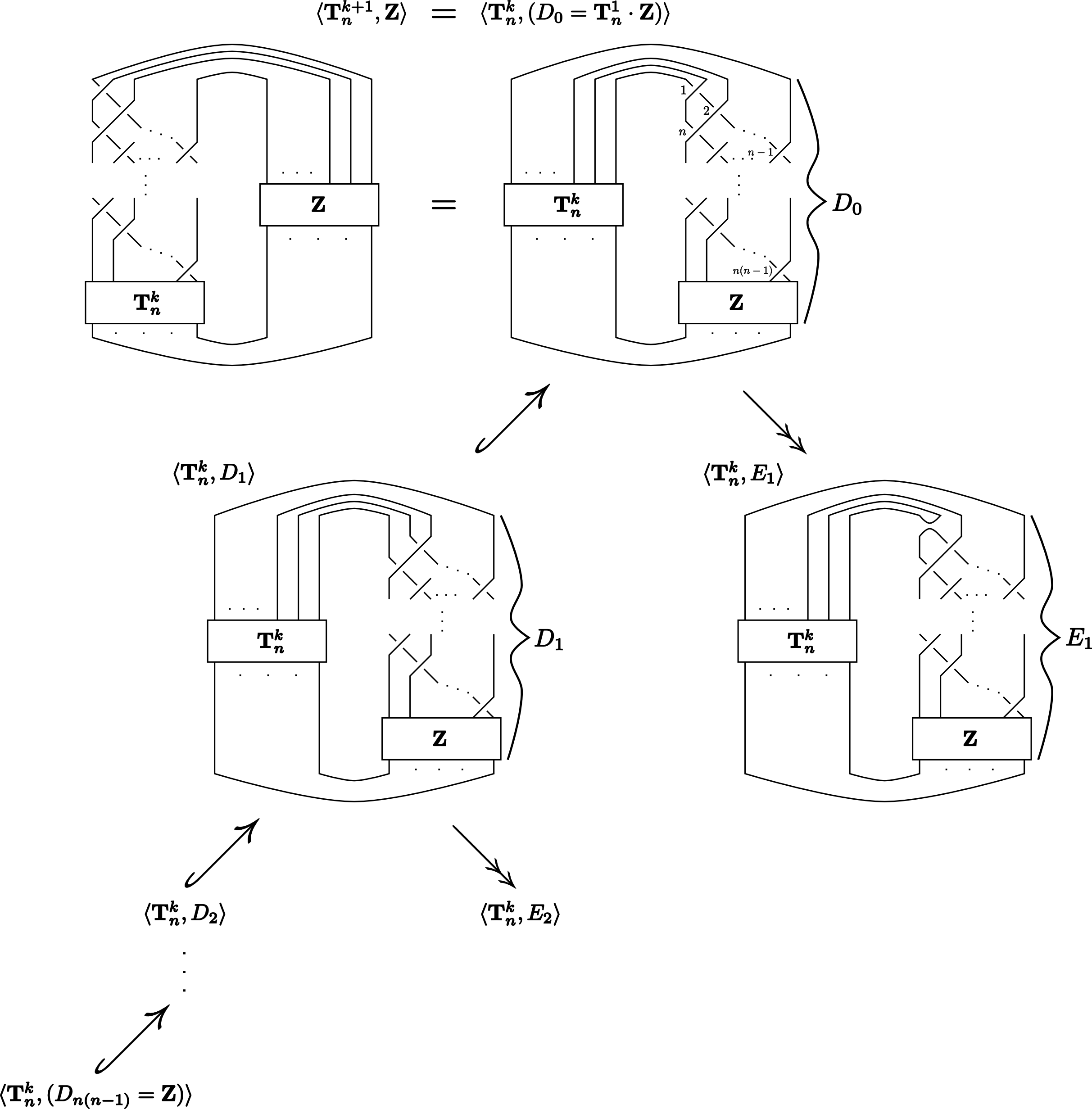}
\caption{Building a single map in the sequence (\ref{pos seq}) as a composition of inclusions coming from resolving crossings as in Lemma \ref{Lemma3.32}.  The numbering on the crossings in the diagram $\la \T_n^k , D_0 \ra$ indicates the order in which we resolve crossings.  $D_1$ and $E_1$ are illustrated as well, with the first crossing resolved.  Note that $E_2$ is obtained from $D_1$, not from $E_1$.  Thus any $E_i$ will have precisely one cup/cap.}
%\vspace{.2in}
\label{DandEposseq}
\end{figure}

The left-handed sequence (\ref{neg seq}) is built using compositions of the surjections of the cofibration sequence (\ref{Lemma3.32onX}), since the left-handed twist $\T_n^{-1}$ needs an all-one resolution to give the identity braid $\I_n$.  For this reason, the roles of the $\D_i$ and $\E_i$ are swapped, and their definitions change slightly.  To prevent confusion, we use new names $\F_i$ and $\G_i$ and define $\G_0:=\T_n^{-1}\cdot\Z$, and let $\G_i$ denote the diagram obtained from $\G_{i-1}$ by resolving the $i^\text{th}$ crossing with a 1-resolution, and let $\F_i$ denote the diagram obtained from $\G_{i-1}$ by resolving the $i^\text{th}$ crossing with a 0-resolution.  Pictorially the $\G_i$ match the $\D_i$ from above, and the $\F_i$ match the $\E_i$, but in the cofibration sequences we see (again ignoring homological shifts)
\begin{equation}
\label{neg cofib seq}
\begin{aligned}
\X^{j+(k+1)n(n-1)-(i-1)+N_{\F_i}+k\tau_{\F_i}}\left( \la \T_n^{-k} , \F_i \ra \right) &\hookrightarrow
\X^{j+(k+1)n(n-1)-(i-1)+N_{\G_{i-1}}+k\tau_{\G_{i-1}}}\left( \la \T_n^{-k}, \G_{i-1} \ra \right)\\ &\twoheadrightarrow
\X^{j+(k+1)n(n-1)-i+N_{\G_i}+k\tau_{\G_i}}\left( \la \T_n^{-k} , \G_i \ra \right).
\end{aligned}
\end{equation}
Notice the extra shifts of $i-1$ and $i$, which occur because we have been `losing' 1-resolutions along the way.  We can see that, once $i=n(n-1)$, we arrive at $j+kn(n-1)$ together with the normalization terms, as desired for $\la \T_n^{-k},\Z\ra$ in sequence (\ref{neg seq}).  We use Lemma \ref{NZlemma} in the same way to guarantee that the $N_{\Z}$ and $\tau$ terms don't change, and we also see the extra homological shift due to losing 1-resolutions as we go.
\end{proof}

\begin{proposition}
\label{main seq stab}
Fix $j\in\mathbb{Z}$ and $n\geq 2$ in $\N$.  Then for any $(n,n)$-tangle $\Z$, both sequences (\ref{pos seq}) and (\ref{neg seq}) stabilize.  That is, there exist bounds $b^+$ and $b^-$ such that, for $k\geq b^+$, the maps in (\ref{pos seq}) are all stable homotopy equivalences, and similarly for $k\geq b^-$ for the maps in (\ref{neg seq}).  Furthermore, $b^+$ depends only on $j$ and the all-zero resolution of $\Z$, while $b^-$ depends on $j$, the number of crossings in $\Z$, and the all-one resolution of $\Z$.
\end{proposition}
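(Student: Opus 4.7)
My strategy is to decompose each map in the sequences into its $n(n-1)$ elementary cofibration steps (\ref{pos cofib seq}) and (\ref{neg cofib seq}), and then invoke Lemma \ref{Lemma3.32} to upgrade each such step to a stable homotopy equivalence once $k$ is large enough. By that lemma, it suffices that the complementary term $\X^*(\la \T_n^{\pm k}, \E_i\ra)$ or $\X^*(\la \T_n^{\pm k}, \F_i\ra)$ is acyclic in the relevant $q$-degree. Since $\E_i$ (respectively $\F_i$) arises from $\D_{i-1}$ (respectively $\G_{i-1}$) by the non-identity resolution of a single $\T_n^{\pm 1}$ crossing while every other crossing of that $\T_n^{\pm 1}$ has been resolved into the identity, the tangle is essentially $e_\ell \cdot \Z$ for a standard Temperley-Lieb generator $e_\ell$. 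Consequently $\la \T_n^{\pm k}, \E_i\ra$ fits the hypothesis of Lemma \ref{Counting lemma}, so it can be isotoped to $\la \T_{n-2}^{\pm k}, \Z'\ra$ for an explicit tangle $\Z'$ on $n-2$ strands that still carries a turnback (coming from the cup produced by the Counting Lemma).

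Once in this reduced form, I would invoke Rozansky's Theorem \ref{Roz Inf Twist}: property I guarantees $KC(\la \T_{n-2}^{\pm\infty}, \Z'\ra)\simeq 0$ because the turnback in $\Z'$ kills the infinite twist. The stabilization half of the same theorem then upgrades this to the finite-twist statement: for any fixed $q$-degree, $KC(\T_{n-2}^{\pm k})$ agrees with $KC(\T_{n-2}^{\pm\infty})$ up to chain homotopy once $k$ exceeds a certain bound, so $KC(\la \T_{n-2}^{\pm k}, \Z'\ra)$ is likewise acyclic in that $q$-degree for all sufficiently large $k$. Feeding this acyclicity back into Lemma \ref{Lemma3.32} at each of the $n(n-1)$ elementary steps then delivers the desired stable homotopy equivalence for the $k^{\text{th}}$ map in the sequence.

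To produce the bounds $b^\pm$ with the asserted dependencies, I would translate the overall target $q$-degree into a $q$-degree inside the reduced closure $\la \T_{n-2}^{\pm k}, \Z'\ra$ using the Counting Lemma adjustments $\tau'_+ = \tau_+ + 2n$ and $\tau'_- = \tau_- + 2n - 6$, and then apply Rozansky's quantitative stabilization range there. In the positive case, the controlling quantity is a lower $q$-degree bound determined by $j$ together with the circle count in the all-zero resolution of $\Z$, which bounds from below the $q$-degrees produced by any resolution of $\Z'$. In the negative case, the analogous upper $q$-degree bound additionally brings in the total crossing count of $\Z$ (via the normalization shift in the reduced diagram) and the all-one resolution of $\Z$, accounting for the extra dependence claimed for $b^-$. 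I expect the main technical obstacle to be the grading bookkeeping: each elementary step can shift the local $\tau$'s with the resolved orientations (Remark \ref{tau N rmk}), and the normalization shifts $N_{\D_i}, N_{\E_i}, N_{\F_i}, N_{\G_i}$ must be consistently compared to the shifts on the reduced side after Counting Lemma, so isolating acyclicity at exactly the correct $q$-degree and extracting a uniform bound with the asserted dependencies will require careful tracking across all $n(n-1)$ intermediate diagrams.
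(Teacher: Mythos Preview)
Your overall architecture matches the paper exactly: decompose each map into the $n(n-1)$ elementary cofibrations, apply the Counting Lemma to the turnback in each $\E_i$ (resp.\ $\F_i$) to pass from $\la \T_n^{\pm k},\E_i\ra$ to $\la \T_{n-2}^{\pm k},\E_i'\ra$, and then argue acyclicity at the target $q$-degree so that Lemma~\ref{Lemma3.32} applies.

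Where you diverge is at the acyclicity step. You propose to invoke Rozansky's Theorem~\ref{Roz Inf Twist}: the remaining turnback in $\E_i'$ kills $KC(\T_{n-2}^{\pm\infty})$, and then Rozansky's stabilization transports this to $KC(\T_{n-2}^{\pm k})$ for $k$ large. The paper does \emph{not} do this. After one application of the Counting Lemma it simply bounds the support of the Khovanov chain complex directly: for the positive case it observes that $\min_q\bigl(\la \T_{n-2}^k,\E_i'\ra\bigr)$ is at least the $q$-degree of the all-$v_-$ generator in the all-zero resolution, namely $-\#\text{circ}(\cdot)+k\tau'_{\E_i}+N_{\E_i}$. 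Since $\tau'_{\E_i}=\tau_{\E_i}+2n$ by the Counting Lemma, this lower bound outpaces the target $q$-degree $j+N_{\E_i}+k\tau_{\E_i}-1$ by a margin of $2nk$, so the target falls below the support as soon as $k$ exceeds the explicit $b^+$ in (\ref{b+ definition}). The negative case is the mirror argument using $\max_q$, which is where the crossing count of $\Z$ enters (via the number of 1-resolutions in the all-one resolution).

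Your route can presumably be pushed through, but it is less economical and the asserted dependencies are harder to extract. Rozansky's stabilization is phrased for tangle complexes in a homological range, and your target $q$-degree already moves with $k$; untangling what ``Rozansky's quantitative stabilization range'' means for the closed-up diagram at that moving $q$-degree would in the end force you to reproduce exactly the elementary extremal-$q$-degree estimate the paper uses. There is also a mild awkwardness in appealing to Rozansky's stabilization as a black box inside a proposition whose purpose is to lift that very stabilization to spectra. The paper's direct bound avoids all of this: it is self-contained, uses nothing beyond the Counting Lemma shift $\tau'=\tau+2n$ (resp.\ $+2n-6$) and the trivial bound on $\min_q$/$\max_q$ of a Khovanov complex, and reads off the dependencies of $b^{\pm}$ by inspection from (\ref{b+ definition}) and (\ref{b- definition}).
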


\begin{proof}
We will prove the stabilization of the two sequences separately to highlight the slight differences between the two.  The notations $\D_i$, $\E_i$, $\F_i$ and $\G_i$ introduced in the previous proof will be used throughout.  Both cases will be similar to the arguments in \cite{MW}. 

Focusing first on the right-handed case, we consider the cofibration sequences (\ref{pos cofib seq}).  According to Lemma \ref{Lemma3.32}, as long as all of the Khovanov chain complexes $KC^{j+N_{\E_i}+k\tau_{\E_i}-1}\left( \la \T_n^k , \E_i \ra \right)$ are acyclic, the inclusions in Equation (\ref{pos cofib seq}) will be stable homotopy equivalences for all $i=1,\dots,n(n-1)$, allowing us to conclude that their composition (which is the map in the sequence (\ref{pos seq})) is as well.  Let $\min_q(\cdot)$ be the minimal $q$-degree of non-zero Khovanov homology for a link diagram.  Our goal now is to find a bound $b^+$ so that, for all $i=1,\dots,n(n-1)$,
\begin{equation}
\label{b+ goal}
j+N_{\E_i}+k\tau_{\E_i}-1 < \min_q\left( \la \T_n^k , \E_i \ra \right) \hspace{.3in} \text{for all }k\geq b^+.
\end{equation}

Figure \ref{EitoEi'} illustrates the key point of the proof.  The diagram $\la \T_n^k , \E_i \ra$ has a turnback at the `top' of $E_i$ that can be swung around and `pulled through' the twisting $\T_n^k$ and then back around to the bottom of $E_i$, just as in Lemma \ref{Counting lemma}.  Let $E'_i$ denote the resulting tangle, so that we have $\la \T_n^k , \E_i \ra$ isotopic to $\la \T_{n-2}^k , \E'_i \ra$.  Since Khovanov homology is an isotopy invariant, we must have
\begin{equation}
\label{Ei minq isotopy}
\min_q\left( \la \T_n^k , \E_i \ra \right) = \min_q\left( \la \T_{n-2}^k , \E'_i \ra\right).
\end{equation}

\begin{figure}
\centering
\vspace{.2in}
\includegraphics[scale=.4]{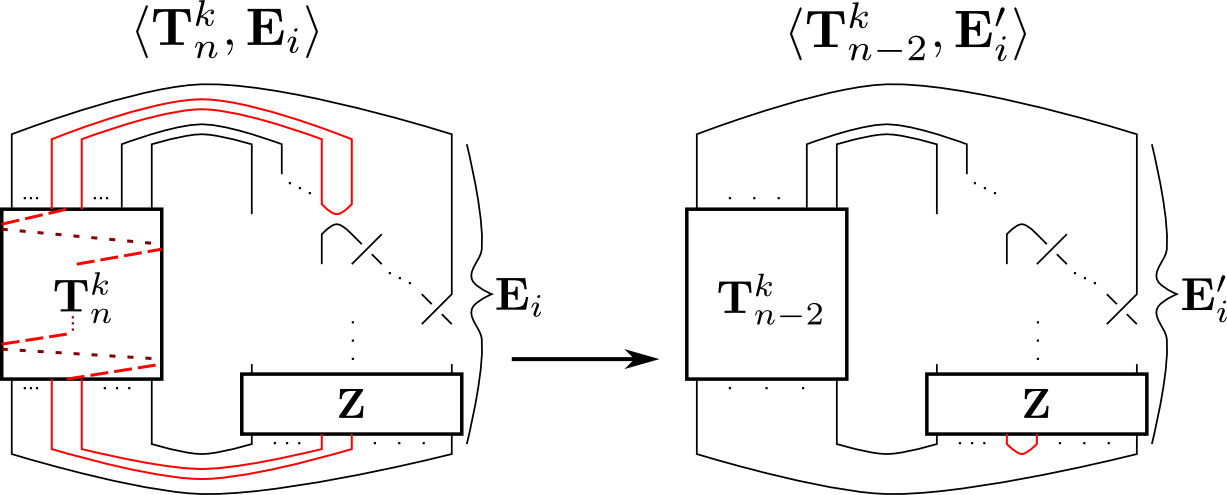}
\caption{Pulling the turnback in $\la \T_n^k , \E_i \ra$ through the twists to get $\la \T_{n-2}^k , \E'_i \ra$.  The turnback and its path are indicated in red.  Note that none of the crossings in $\mathbf{E_i}$ (including $\mathbf{Z}$) are affected.}
\vspace{.2in}
\label{EitoEi'}
\end{figure}

Now just as in the proof of Lemma 3.12 in \cite{MW}, the minimal $q$-degree of non-zero Khovanov homology for a diagram is bounded below by the minimal possible $q$-degree in the entire Khovanov chain complex, which occurs in the all-zero resolution by decorating all of the circles with $v_-$.  The all-zero resolution of the crossings coming from $\T$'s give identity braids, and so we have
\begin{equation}
\min_q\left( \la \T_{n-2}^k , \E'_i \ra \right) \geq -\#\text{circ}\left( \la \I_{n-2} , \Z^{\cap \iota}_{\cup \iota , \text{all-zero} } \ra \right) + k\tau'_{\E_i'} + N_{\E'_i}.
\end{equation}
Here $\#\text{circ}(\cdot)$ indicates the number of circles present in the planar diagram, while $\iota:= i \mod(n-1)$.  The ``all-zero'' subscript indicates that all of the crossings in $\Z^{\cap \iota}_{\cup \iota}$ have been resolved into zero-resolutions.  The $\tau'$ term and the $N$ term are the $n^+-2n^-$ normalization terms as usual.  The $\tau'$ indicates that we are counting positive and negative crossings from $\T_{n-2}$ as opposed to $\tau$ that was counting such crossings in $\T_n$ (see the notation used in Lemma \ref{Counting lemma}).

Now because we performed an isotopy to get from $\E_i$ to $\E'_i$, the orientations of the strands did not change.  Furthermore, no crossings were added to or removed from $\E_i$.  Thus $N_{\E_i}=N_{\E'_i}$, and $\tau'_{\E'_i}$ can be viewed as $\tau'_{\E_i}$.  We then use Equation (\ref{pos tau'}) from Lemma \ref{Counting lemma} to deduce
\begin{equation}
\label{pos minq bound}
\min_q\left( \la \T_{n-2}^k , \E'_i \ra \right) \geq -\#\text{circ}\left( \la \I_{n-2} , \Z^{\cap \iota}_{\cup \iota , \text{all-zero} } \ra \right) + k(\tau_{\E_i}+2n) + N_{\E_i}.
\end{equation}
Combining Equations (\ref{b+ goal}), (\ref{Ei minq isotopy}), and (\ref{pos minq bound}) gives us the following new goal for our bound $b^+$:
\begin{equation}
\label{b+ new goal}
j+N_{\E_i}+k\tau_{\E_i}-1 <  -\#\text{circ}\left( \la \I_{n-2} , \Z^{\cap \iota}_{\cup \iota , \text{all-zero} } \ra \right) + k(\tau_{\E_i}+2n) + N_{\E_i} \hspace {.2 in} \text{for all }k\geq b^+
\end{equation}
which is clearly satisfied for all $i$ by choosing
\begin{equation}
\label{b+ definition}
b^+:= \max_{\iota=1,\dots,n-1}\frac{j + \#\text{circ}\left( \la \I_{n-2} , \Z^{\cap \iota}_{\cup \iota , \text{all-zero} } \ra \right)}{2n}.
\end{equation}
We see clearly from the definition of $b^+$ that it depends only on $j$ and the all-zero resolution of $\Z$, as claimed.  The final homological shift is clear.

We now turn to the left-handed sequence (\ref{neg seq}).  The strategy is very similar so we will be brief.  This time we consider the cofibration sequence (\ref{neg cofib seq}) where our goal is to bound $k$ to ensure that all of the $KC^{j-(i-1)+N_{\F_i}+k\tau_{\F_i}}\left( \la \T_n^{-k} , \F_i \ra \right)$ are acyclic, ensuring that the surjections give stable homotopy equivalences.

Since the $\F_i$ pictorially match the $\E_i$ from before, we can still use Lemma \ref{Counting lemma} in the same way to arrive at $\la \T_{n-2}^{-k}, \F'_i \ra$ with corresponding $\tau'$.  Now comes the main difference between the left- and right-handed sequences.  For the right-handed twist, the all-zero resolution of $\T_{n-2}^k$ is just $\I_n$; in particular, it is independent of $k$.  Taking 0-resolutions motivates bounding based on the minimal $q$-degree.  But for the left-handed twist, it is the all-one resolution of $\T_n^{-k}$ that is just $\I_n$.  Taking 1-resolutions motivates bounding based on the maximal $q$-degree.  So we define $\max_q(\cdot)$ to be the maximal $q$-degree of non-zero Khovanov homology for a given diagram, which is bounded above by the maximal $q$-degree for the full Khovanov chain complex.  Following the logic of the right-handed case, we get
\begin{align*}
\max_q\left( \la \T_n^{-k} , \F_i \ra \right) &= \max_q\left( \la \T_{n-2}^{-k}, \F'_i \ra \right)\\
&\leq \#\text{cros}\left( \la \T_{n-2}^{-k}, \F'_i \ra \right) + \#\text{circ}\left( \la \I_{n-2} , \Z^{\cap \iota}_{\cup \iota , \text{all-one} } \ra \right) + k(\tau'_{\F'_i}) + N_{\F_i}\\
&= \left( k(n-2)(n-3) + (n(n-1)-i) + \#\text{cros}(\Z) \right) +  \#\text{circ}\left( \la \I_{n-2} , \Z^{\cap \iota}_{\cup \iota , \text{all-one} } \ra \right)\\
&\hspace{.2in}+ k(\tau_{\F_i}+2n-6) + N_{\F_i}\\
&= k(n^2-3n+\tau_{\F_i}) + (n(n-1)-i) + \#\text{cros}(\Z) + \#\text{circ}\left( \la \I_{n-2} , \Z^{\cap \iota}_{\cup \iota , \text{all-one} } \ra \right)\\
&\hspace{.2in}+ N_{\F_i}.
\end{align*}
The $\#\text{cros}(\cdot)$ denotes the total number of crossings.  This term appears because the $q$-degree counts the number of 1-resolutions taken (which will be all of the crossings).  The third line breaks this term into several self-explanatory pieces; the $n(n-1)-i$ term handles the crossings `above' the $\Z$ (See Figure \ref{EitoEi'}).  Meanwhile, the changing of $\tau'$ to $\tau+2n-6$ in the third line follows from the left-handed version of Lemma \ref{Counting lemma}.

From the cofibration (\ref{neg cofib seq}) above, we see that our goal for the left-handed twists is to ensure that, for all $i=1,\dots,n(n-1)$, and for all $k\geq b^-$,
\begin{align*}
j+(k+1)&n(n-1)-(i-1)+N_{\F_i}+k\tau_{\F_i} >\\
&k(n^2-3n+\tau_{\F_i}) + (n(n-1)-i) + \#\text{cros}(\Z) + \#\text{circ}\left( \la \I_{n-2} , \Z^{\cap \iota}_{\cup \iota , \text{all-one} } \ra \right) + N_{\F_i}.
\end{align*}
This is clearly achieved by setting
\begin{equation}
\label{b- definition}
b^-:=\max_{\iota=1,\dots,n-1}\frac{-j+ \#\text{cros}(\Z) + \#\text{circ}\left( \la \I_{n-2} , \Z^{\cap \iota}_{\cup \iota , \text{all-one} } \ra \right)}{2n}
\end{equation}
which clearly depends only on $j$, the number of crossings in $\Z$, and the all-one resolution of $\Z$ as desired.
\end{proof}

\begin{remark}
\label{b+ vs b-}
Notice the similarity between the bounds $b^+$ and $b^-$.  In both cases, the bound involves $\frac{\pm j}{2n}$ plus a constant term (independent of $j$).  Thus all of the careful tracking of normalization shifts `cancel out' in precisely the same way regardless of using right- or left-handed twists.  The sign change of $j$ versus $-j$ also makes sense when we recall that the graded Euler characteristic of these spaces is meant to give a power series expansion of the corresponding rational functions coming from the Jones-Wenzl projectors, in $q$ for right-handed twists (so using positive $j$ terms) and in $q^{-1}$ for left-handed twists (so using negative $j$ terms).  With this in mind, the only real difference between $b^+$ and $b^-$ comes from the use of the all-zero resolution of $D$ versus the all-one resolution, and the need to count crossings away from the left-handed twists.
\end{remark}

We are now ready to prove Theorem \ref{Projectors X exists}.  Let $D$ denote a diagram obtained from a link diagram by formally replacing a finite number of identity braids $\I_{n_i}$ with Jones-Wenzl projectors $P_{n_i}$.  Let $m\in\N$ denote the total number of projectors in $D$.  For any $(k_1,\dots,k_m)\in(\N\cup 0)^m$, let $D^{\pm}(k_1,\dots,k_m)$ denote the diagram $D$ with each $P_{n_i}$ replaced by $T_{n_i}^{\pm k_i}$.  Note that it is very important that the diagrams have either all right-handed twists, or all left-handed twists.  We do not allow any mixing of the two.

We focus on the right-handed case first.  Fixing $j\in\mathbb{Z}$, we consider the infinite $m$-dimensional cube of maps built as follows.  The vertices of the cube correspond to $(k_1,\dots,k_m)\in(\N\cup 0)^m$.  At each such vertex we place the space
\begin{equation}
\label{pos cube vx}
\X_+^{j+N_D}(k_1,\dots,k_m):=\Sigma^{-\sum_{i=1}^m k_i\eta_i}\X^{j+N_D+\sum_{i=1}^m k_i\tau_i}(D(k_1,\dots,k_m)).
\end{equation}
Here, the subscripts on the normalization shifts $\tau$ and $\eta$ indicate which $\T_{n_i}$ is being referred to (see Definition (\ref{tau N def}) and Remark (\ref{tau N rmk})).  Meanwhile, the $N_D$ is referring to the normalization shift $n^+-2n^-$ for all crossings totally separate from any of the inserted twists (ie, crossings present in the original diagram $D$, discounting the Jones-Wenzl projectors).  Now between any two adjacent vertices of $(\N\cup 0)^m$, we see all of the $k_i$ remain constant except one of them, say $k_{\hat{i}}$, which differs by one between the two vertices.  To this edge we assign the map
\begin{equation}
\label{pos cube edge}
\X_+^{j+N_D}(k_1,\dots,k_{\hat{i}},\dots,k_m) \hookrightarrow \X_+^{j+N_D}(k_1,\dots,k_{\hat{i}}+1,\dots,k_m)
\end{equation}
induced by Lemma \ref{Lemma3.32} as in Proposition \ref{building seq}.  

\begin{definition}
\label{X+ def}
Given a diagram $D$ involving Jones-Wenzl projectors, the \emph{(right-handed) Khovanov homotopy type of $D$} is defined to be the wedge sum $\X_+(D):=\bigvee_{j\in\Z}\X_+^{j+N_D}(D)$ where for each $q$-degree $j+N_D$, the spectrum $\X_+^{j+N_D}(D)$ is defined to be the homotopy colimit of the cube of maps described by Equations (\ref{pos cube vx}) and (\ref{pos cube edge}).
\end{definition}

\begin{proof}[Proof of Theorem \ref{Projectors X exists} (Right-handed Case)]
We wish to show that the cube of maps defining $\X_+^{j+N_D}(D)$ `stabilizes' in a particular sense.  To do this we isolate a single projector $P_{n_{\hat{i}}}$ and fix all of the $k_{i\neq \hat{i}}$.  This allows us to view the maps (\ref{pos cube edge}) as (ignoring homological shifts)
\begin{equation}
\label{isolate pos Pn}
\X^{j+k_{\hat{i}}\tau_{\hat{i}} + N_D +\sum_{i\neq\hat{i}}k_i\tau_i} \left( \la \T_{n_{\hat{i}}}^{k_{\hat{i}}} , \Z \ra \right)
\hookrightarrow
\X^{j+(k_{\hat{i}}+1)\tau_{\hat{i}} + N_D +\sum_{i\neq\hat{i}}k_i\tau_i} \left( \la \T_{n_{\hat{i}}}^{k_{\hat{i}}+1} , \Z \ra \right)
\end{equation}
where the tangle $\Z$ includes all of the other $\T_{n_i}^{k_i}$.  Having fixed $j$, these maps are all stable homotopy equivalences for $k_{\hat{i}}>b^+_{\hat{i}}$ for some bound $b^+_{\hat{i}}$ that depends only on the all-zero resolution of $\Z$.  Since the all-zero resolution of any $\T_{n_i}^{k_i}$ is just $\I_{n_i}$ regardless of $k_i$, this bound $b^+_{\hat{i}}$ is \emph{independent of the other $k_i$} (this is the point that requires we do not mix right- and left-handed twists in our construction).  Thus we can find the various bounds $b^+_i$ one projector at a time effectively ignoring the rest.  Since there are only finitely many projectors, we can find a global bound $b^+$ which works for all of the $k_i$ at once and declare that the cube is stable for all $k_i>b^+$.  This also allows us to use a simpler notation: let $D(k):=D(k,\dots,k)$, and similarly for $\X_+^{j+N_D}(k)=\X_+^{j+N_D}(k,\dots,k)$.  Our proof then shows that, for any fixed $j\in\Z$, the `diagonal sequence' $\X_+^{j+N(D)}(k)$ stabilizes as $k\rightarrow\infty$, and so the hocolim $\X_+^{j+N_D}(D) \simeq \X_+^{j+N_D}(k)$ for some large enough $k$ depending on $j$.  Since the chain complexes of the twists are known to stabilize to the categorified Jones-Wenzl projectors, the wedge sum $\X_+(D)=\bigvee_{j\in\Z}\X_+^{j+N_D}(D)$ satisfies the requirements of Theorem \ref{Projectors X exists}.
\end{proof}

The left-handed twists work in exactly the same fashion, so we only mention the slight differences.  We populate the vertices of the cube by spaces
\begin{equation}
\label{neg cube vx}
\X_-^{j+N_D}(k_1,\dots,k_m):=\Sigma^{-\sum_{i=1}^m k_i\eta_i}\X^{j+\sum_{i=1}^m k_in_i(n_i-1) + N_D+\sum_{i=1}^m k_i\tau_i}(D(k_1,\dots,k_m))
\end{equation}
and the edges are maps
\begin{equation}
\label{neg cube edge}
\X_-^{j+N(D)}(k_1,\dots,k_{\hat{i}},\dots,k_m) \twoheadleftarrow \X_-^{j+N_D}(k_1,\dots,k_{\hat{i}}+1,\dots,k_m)
\end{equation}
induced by Lemma \ref{Lemma3.32} once again.  Notice the extra grading shift $\sum_{i=1}^m k_in_i(n_i-1)$, which counts the number of crossings available in all of the $\T_{n_i}^{-k_i}$.

\begin{definition}
\label{X- def}
Given a diagram $D$ involving Jones-Wenzl projectors, the \emph{(left-handed) Khovanov homotopy type of $D$} is defined to be the wedge sum $\X_-(D):=\bigvee_{j\in\Z}\X_-^{j+N_D}(D)$ where for each $q$-degree $j+N_D$, the spectrum $\X_-^{j+N_D}(D)$ is defined to be the homotopy colimit of the cube of maps described by Equations (\ref{neg cube vx}) and (\ref{neg cube edge}).
\end{definition}

\begin{proof}[Proof of Theorem \ref{Projectors X exists} (Left-handed Case)]
Focusing on one projector ($\hat{i}$) at a time as before, the formula (\ref{b- definition}) for $b^-_{\hat{i}}$ does appear to depend on the other $k_i$ since the term $\#\text{cros}(\Z)$ will count crossings in the other twists.  However, this count is cancelled out precisely by the extra grading shift $\sum_{i=1}^m k_in_i(n_i-1)$, and the bounds $b_i^-$ are again mutually independent allowing the same argument as for the right-handed case to go through.  The details here are left to the reader.
\end{proof}

Thus we have two equally eligible candidates, $\X_+(D)$ and $\X_-(D)$, for a spectrum that satisfies the requirements of Theorem \ref{Projectors X exists}, depending on whether we want to view the Euler characteristic as a power series representation of the corresponding rational function in $q^{+1}$ or $q^{-1}$.  In either case, the wedge summand in a specific $q$-degree can be computed using a finite-twist approximation $D(k)$ where the amount of twisting $k$ needed depends both on the diagram $D$ and on the $q$-degree being considered.

\begin{remark}
\label{proj one at a time}
The independence of the various $k_i$ used in the proofs above has been used to take the homotopy colimit `diagonally', simplifying the notation by tracking only a single value of $k$.  However, this independence can also be viewed as allowing us to take the colimit one projector at a time, in any order we like.  This is already implicit in the diagonal version in the passage from $D(k)$ to $D(k+1)$, where it does not matter in what order we treat all of the projectors going from their individual $k$-twists to their individual $(k+1)$-twists.
\end{remark}

\subsection{Properties of $\X(D)$}
Before going on to establish the connection to spin networks and colored links, we state and prove some properties for $\X_+(D)$ for diagrams $D$ with Jones-Wenzl projectors as above.  The propositions in this section will be stated and proved for right-handed twists only; the left-handed versions for $\X_-(D)$ are proved analogously, using alterations similar to those discussed in the previous section.  As such, we drop the $+$ notation for the time being.  In addition, as seen in the proofs in the previous section, the homological shifts are irrelevant from the point of view of establishing results about stabilizations of sequences, and so we will also be ignoring the various suspensions throughout the proofs in this section.

All of the proofs in this section follow the same pattern.  We prove properties of $\X(D)$ one $q$-degree at a time.  To do so, we replace $D$ by the corresponding finite-twist approximation $D(k>b^+)$ as in the proof of Theorem \ref{Projectors X exists}, and focus on one particular projector, say $P_{n_1}$ (now replaced by $\T_{n_1}^k$).  We prove the property we are interested in using this `honest' link diagram (no projectors) $D(k)$, and then conclude the same result about $\X(D)$.

Our first property is perhaps the most fundamental one.  Recall that the first axiom used to characterize both the Jones-Wenzl projectors and their categorifications is that they are `killed by turnbacks'.  The following proposition gives the analogous statement for our spectra $\X(D)$.

\begin{proposition}
\label{X killed by turnbacks}
For any diagram $D$ involving at least one Jones-Wenzl projector that is capped by at least one turnback, $\X(D)\simeq *$.
\end{proposition}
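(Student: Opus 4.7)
The plan is to prove the proposition one $q$-degree at a time by pushing the relevant $q$-degree out of the range of non-zero Khovanov homology. Fix $j \in \mathbb{Z}$; I will show $\X_+^{j+N_D}(D) \simeq *$. (The left-handed case is analogous, using $\max_q$ in place of $\min_q$ and Equation (\ref{neg tau'}) in place of Equation (\ref{pos tau'}) of Lemma \ref{Counting lemma}.) By the proof of Theorem \ref{Projectors X exists}, $\X_+^{j+N_D}(D)$ is stably equivalent to a suspension of $\X^{j+N_{D(k)}}(D(k))$ whenever the multi-index $(k_1,\dots,k_m)$ lies in the stable range of each projector. Since suspension preserves contractibility, it suffices to exhibit a single such multi-index for which $\X^{j+N_{D(k)}}(D(k)) \simeq *$.

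Let $P_{n_1}$ denote the projector capped by the turnback, say between strands $i$ and $i+1$ on top, so that $D(k) = \la (\T_{n_1}^{k_1})^{\cap i}, \Z \ra$ for some tangle $\Z$ containing the remainder of the diagram. By Lemma \ref{Counting lemma}, $D(k)$ is isotopic to $\tilde D(k) := \la \T_{n_1-2}^{k_1}, \Z_{\cup n_1-i} \ra$, and the $n^+ - 2n^-$ count of a single twist jumps by $2n_1$ when going from $\T_{n_1}$ to $\T_{n_1-2}$; since nothing else in the diagram is altered, $N_{\tilde D(k)} = N_{D(k)} + 2k_1 n_1$. The minimum $q$-degree of any generator of $KC(\tilde D(k))$ (attained at the all-zero resolution with every circle labeled $v_-$) equals $-\#\text{circ}(\text{all-zero}(\tilde D(k))) + N_{\tilde D(k)}$, and since the all-zero resolution of every $\T_{n_i}^{k_i}$ is the identity braid, this circle count is some constant $C$ independent of all the $k_i$. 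Combining these gives
\[ \min_q\bigl(KC(\tilde D(k))\bigr) - \bigl(j + N_{D(k)}\bigr) \;=\; 2k_1 n_1 - C - j, \]
which is strictly positive once $k_1 > (C+j)/(2n_1)$.

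For such a $k_1$ (with the remaining $k_i$ large enough to lie in the stable range), the target $q$-degree $j + N_{D(k)}$ sits strictly below the minimum $q$-degree of any chain generator of $KC(\tilde D(k))$, so $Kh^{*, j+N_{D(k)}}(\tilde D(k)) = 0$, and by isotopy invariance $Kh^{*, j+N_{D(k)}}(D(k)) = 0$ as well. Since the Lipshitz--Sarkar spectrum $\X^{j+N_{D(k)}}(D(k))$ is a bounded-below CW spectrum whose reduced cohomology is precisely this Khovanov homology, it is stably contractible, and the proposition follows. The main technical point is the careful tracking of the normalization shift $2k_1 n_1$ through the isotopy of Lemma \ref{Counting lemma}; once this is in hand, the resulting inequality on $k_1$ is independent of $\tau$ and of the other projectors, matching the ``one projector at a time'' philosophy emphasized throughout Section 3.
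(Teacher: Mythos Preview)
Your proof is correct and follows essentially the same approach as the paper's: both fix a $q$-degree, pass to a finite-twist approximation, pull the turnback through the twisting via Lemma \ref{Counting lemma} to pick up the $+2n_1$ shift per twist, and then observe that for $k_1$ large enough the target $q$-degree falls strictly below the minimum $q$-degree of the chain complex. The only difference is packaging: the paper tracks $\tau_1$ and $N_\Z$ separately while you bundle everything into $N_{D(k)}$ and record the isotopy as $N_{\tilde D(k)} = N_{D(k)} + 2k_1 n_1$; just make explicit that your chosen $k_1$ also exceeds the stability bound $b^+_1$ so that the finite-twist approximation is valid.
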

\begin{proof}
Fix $j\in\mathbb{Z}$ and focus in on a projector that is capped by a turnback.  Label this projector $P_{n_1}$.  As in the proof of Theorem \ref{Projectors X exists}, let $m$ denote the total number of projectors, and replace the diagram $D$ by the diagram $D(k)$ with $\T_{n_i}^{k}$ in place of all of the projectors.  Singling out the $\T_{n_1}^k$ allows us to consider the diagram $D(k)=\la (\T_{n_1}^k)^{\cap \iota} , \Z \ra$ for some $\iota\in\{1,\dots,n_1-1\}$, where the $\Z$ contains all of the other $\T_{n_i}^k$ (the picture is precisely that of Figure \ref{CountingLemmaPic}).  The corresponding term in the diagonal sequence is $\X^{j + N_D + k\tau_1 + k\sum_{i=2}^m \tau_i}(D(k)) = \X^{j+k\tau_1+N_\Z}\left( \la (\T_{n_1}^k)^{\cap \iota} , \Z \ra \right)$, where $N_\Z$ is accounting for both $N_D$ (crossings away from any twists) and $k\sum_{i=2}^m \tau_i$ (crossings from the other twists).  We use Lemma \ref{Counting lemma} and pull the cap through the twisting and around to the bottom of $\Z$ so that
\[\X^{j+k\tau_1+N_\Z}\left( \la (\T_{n_1}^k)^{\cap \iota} , \Z \ra \right) \simeq \X^{j+k\tau_1+N_\Z}\left( \la (\T_{n_1-2}^k) , \Z_{\cup n_1-\iota} \ra \right).\]
Here note that the $\tau_1$ refers to counting crossings in $\T_{n_1}$ on both sides of this equivalence, despite the fact that $\T_{n_1}$ has been replaced with $\T_{n_1-2}$ on the right side.  Meanwhile, as in the proof of Proposition \ref{main seq stab}, we can calculate
\[\min_q\left( \la (\T_{n_1-2}^k) , \Z_{\cup n_1-\iota} \ra \right) \geq
-\#\text{circ}\left( \la \I_{n_1-2} , \Z_{\cup n_1-\iota , \text{all-zero} } \ra \right) + k\tau'_1 + N_{\Z}\]
where $\tau'_1$ now refers to counting crossings in $\T_{n_1-2}$ as in Lemma \ref{Counting lemma}, which tells us that $\tau'_1=\tau_1+2n_1$.  Thus for $k > \frac{j+\#\text{circ}\left( \la \I_{n_1-2} , \Z_{\cup n_1-\iota , \text{all-zero} } \ra \right)}{2n_1}$ we can perform some simple algebra to conclude
\[j+k\tau_1+N_\Z < \min_q\left( \la (\T_{n_1-2}^k) , \Z_{\cup n_1-\iota} \ra \right)\]
forcing
\[\X^{j + N_D + k\tau_1 + k\sum_{i=2}^m \tau_i}(D(k))=
\X^{j+k\tau_1+N_\Z}\left( \la (\T_{n_1}^k)^{\cap \iota} , \Z \ra \right) \simeq *\]
and so the sequence needed to build $\X^{j+N_D}(D)$ stabilizes to a trivial homotopy type as $k\rightarrow\infty$, regardless of $j$.  We can conclude that 
\[\X(D) = \bigvee_j \X^{j+N_D}(D) \simeq \bigvee_j (*) \simeq *\]
as desired.  Note that the independence of the various twisting, allowing a single $k$ to be used, is implicit in this proof and allows our estimates to essentially ignore the other projectors in $D$.
\end{proof}

\begin{corollary}
\label{X straighten braids}
For any diagram $D$ involving an $n$-strand Jones-Wenzl projector $P_n$ concatenated with a braid $\beta$ on those $n$ strands, $\X^{j+N_D}(D) \simeq \Sigma^a\X^{j+N_{D\setminus\beta}-\beta^-}(D\setminus\beta)$ where $D\setminus\beta$ is used to denote the diagram created by replacing $\beta$ with $\I_n$, the identity braid on those same $n$ strands (this replacement is referred to as \emph{straightening the braid $\beta$}), and $\beta^-$ is the number of crossings of the form \ncrossing in $\beta$ viewed vertically (ie the number of crossings that require 1-resolutions to transform $\beta$ into $\I_n$).  The homological shift $a$ is the difference between the number of negative crossings in the two diagrams, as in Lemma \ref{Lemma3.32}.
\end{corollary}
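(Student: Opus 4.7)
The plan is to prove the corollary by induction on the number of crossings of $\beta$, straightening the braid one crossing at a time, always starting with the top crossing (the one immediately below $P_n$). The base case $\beta = \I_n$ is immediate with $a = 0$, since $D$ and $D\setminus\beta$ coincide.

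For the inductive step, let $c$ denote the top crossing of $\beta$, so that only identity strands separate $c$ from $P_n$. I would apply Lemma \ref{Lemma3.32} to resolve $c$ in the finite-twist approximation $D(k)$. If $c$ is positive, the 0-resolution replaces $c$ by the identity smoothing \vres (straightening $c$) and the 1-resolution replaces $c$ by \hres (a turnback); if $c$ is negative, the two roles are reversed. In either case the non-straightening resolution produces a diagram with a turnback literally capping $P_n$ at the two strands where $c$ used to sit, so by Proposition \ref{X killed by turnbacks} applied to this projector the resulting spectrum has contractible $\X$ in every $q$-degree. Lemma \ref{Lemma3.32} then collapses the cofibration of Equation (\ref{Lemma3.32onX}) to a stable homotopy equivalence between $\X^{j+N_D}(D)$ and an appropriate suspension of $\X$ of $D\setminus c$, where the $j$-index is unchanged if $c$ is positive and decreased by one if $c$ is negative.

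Iterating this procedure over all $\beta^+ + \beta^-$ crossings of $\beta$, the $j$-index accumulates a total decrement of $\beta^-$ (one for each negative crossing) while the normalization shift $N_D$ becomes $N_{D\setminus\beta}$. This lands us in the $q$-degree $j + N_{D\setminus\beta} - \beta^-$ appearing in the statement. The homological suspensions from each step combine into a single $\Sigma^a$, with $a$ computed as the sum of the individual $n^-_L - n^-_{L''}$ or $n^-_L - n^-_{L'} - 1$ contributions from Lemma \ref{Lemma3.32}, matching the net change in the count of negative crossings between $D$ and $D\setminus\beta$ as asserted.

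The main obstacle is verifying that the turnback side of each resolution truly has contractible $\X$. By insisting that $c$ be the topmost crossing of $\beta$ at every step of the induction, the turnback produced by the non-straightening resolution sits directly against the bottom of $P_n$, so the bounding arguments from the proof of Proposition \ref{X killed by turnbacks} (which combine Lemma \ref{Counting lemma} with the minimum-$q$-degree estimate of Proposition \ref{main seq stab}) apply verbatim. This avoids any need to isotope a turnback through the interior of the braid, and ensures that after each straightening the next crossing of the remaining shorter braid is again adjacent to $P_n$, so the induction carries through cleanly until $\beta$ has been replaced entirely by $\I_n$.
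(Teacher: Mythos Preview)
Your argument is correct and follows essentially the same route as the paper: reduce to a single elementary crossing adjacent to the projector, resolve it via Lemma~\ref{Lemma3.32}, and invoke (the proof of) Proposition~\ref{X killed by turnbacks} to kill the turnback side, then iterate. One small caution on terminology: when you call $c$ ``positive'' or ``negative'' you mean $\sigma_\iota$-type (\crossing) versus $\sigma_\iota^{-1}$-type (\ncrossing), not the orientation-dependent sign of the crossing---this matters because $\beta^-$ in the statement counts \ncrossing-type crossings regardless of strand orientation, and it is this crossing type (not the oriented sign) that determines which resolution is the straight one.
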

\begin{proof}
Fix $j\in\mathbb{Z}$.  Let $P_{n_1}$ be the projector with $\beta$ concatenated.  Since any braid $\beta$ is a product of elementary generators $\sigma_\iota^{\pm 1}$ in the braid group $B_{n_1}$ (so $\iota\in\{1,\dots,n_1-1\}$), it is enough to prove the statement for such generators (ie, for a single crossing above the $P_{n_1}$).

Again, we consider $k$ large enough so that
\[\X^{j+N_D}(D) \simeq \X^{j+N_D+k\sum_{i=1}^m \tau_i} (D(k)).\]
We focus in on $\T_{n_1}^k$ and see
\[\X^{j+N_D+k\sum_{i=1}^m \tau_i} (D(k)) \simeq
\X^{j+N_D+k\sum_{i=1}^m \tau_i}\left( \la \sigma_\iota^{\pm 1} \cdot \T_{n_1}^k , \Z \ra \right).\]
Now we use Lemma \ref{Lemma3.32} to resolve the crossing $\sigma_\iota^{\pm 1}$ creating a cofibration sequence.  From the proof of Proposition \ref{X killed by turnbacks}, we see that for large enough $k$ (and once $k>b^+$, we are always free to increase it without changing the stable homotopy type), whichever resolution has a turnback contributes a trivial term to the cofibration sequence, so that Lemma \ref{Lemma3.32} ensures that the homotopy type of the `straight' resolution is stably homotopy equivalent to the original, up to precisely the grading shifts indicated in the statement of this Corollary.  The extra $-1$ for $\sigma_\iota^{-1}$ (which builds the $-\beta^-$ term) is also clear since in this case, it is the 1-resolution that is the `straight' resolution.  The proof works for all $j$, and so the result follows.
\end{proof}

The next corollary can be viewed as lifting the idempotency of the Jones-Wenzl projectors and their categorifications to the realm of the stable homotopy types.
\begin{corollary}
\label{X idempotent}
Let $D$ be a diagram involving two concatenated projectors of possibly different sizes, say $P_{n_1}\cdot P_{n_2}$ with $n_1\leq n_2$ (see Figure \ref{Proj Concat} for clarification on this notion).  Let $D'$ be obtained from $D$ by replacing the smaller projector $P_{n_1}$ with an identity braid $\I_{n_1}$.  Then $\X(D)\simeq\X(D')$.
\end{corollary}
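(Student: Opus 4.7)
The plan is to reduce to Corollary \ref{X straighten braids} by approximating only the smaller projector $P_{n_1}$ by a finite right-handed twist $\T_{n_1}^k$ while leaving $P_{n_2}$ intact. In the resulting diagram $D^k$, the twist $\T_{n_1}^k$, extended by the identity on the remaining $n_2-n_1$ strands, is an honest $n_2$-strand braid sitting on top of the genuine Jones-Wenzl projector $P_{n_2}$, which is exactly the hypothesis of Corollary \ref{X straighten braids}.

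Fix $j\in\mathbb{Z}$. By Remark \ref{proj one at a time} we may perform the homotopy colimit defining $\X_+^{j+N_D}(D)$ one projector at a time, so that for $k$ larger than the bound $b_1^+$ attached to $P_{n_1}$ there is a stable homotopy equivalence
\[
\X_+^{j+N_D}(D) \simeq \Sigma^{-k\eta_1}\X_+^{j+N_D+k\tau_1}(D^k),
\]
where $D^k$ retains $P_{n_2}$ and all other projectors while replacing $P_{n_1}$ by $\T_{n_1}^k$. Applying Corollary \ref{X straighten braids} to $D^k$ with $\beta:=\T_{n_1}^k\cdot\I_{n_2-n_1}$ straightens $\beta$ to $\I_{n_2}$, so that $D^k\setminus\beta=D'$, yielding
\[
\X_+^{j+N_{D^k}}(D^k) \simeq \Sigma^{k\eta_1}\X_+^{j+N_{D'}}(D'),
\]
where $\beta^-=0$ because $\T_{n_1}^k$ is right-handed. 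Using $N_{D^k}=N_{D'}+k\tau_1$ and composing, the suspensions $\Sigma^{-k\eta_1}$ and $\Sigma^{k\eta_1}$ cancel, giving $\X_+^{j+N_D}(D)\simeq\X_+^{j+N_{D'}}(D')$ for every $j$, and wedging over $j$ produces $\X_+(D)\simeq\X_+(D')$.

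The only real obstacle is the arithmetic bookkeeping: one must verify that the homological shift $a=n^-_{D^k}-n^-_{D'}$ provided by Corollary \ref{X straighten braids} equals $k\eta_1$, and that $N_{D^k}-N_{D'}=k\tau_1$. Both identities follow immediately from Definition \ref{tau N def}, since the $k$ full right-handed twists of $\T_{n_1}^k$ contribute precisely $k\eta_1$ to $n^-$ and $k\tau_1$ to $n^+-2n^-$ over an otherwise unchanged diagram. The left-handed case proceeds identically using $\T_{n_1}^{-k}$ in place of $\T_{n_1}^k$, where the nonzero $\beta^-$ appearing in the straightening step is exactly absorbed by the extra grading shift built into Definition \ref{X- def}.
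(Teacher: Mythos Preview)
Your proposal is correct and follows essentially the same argument as the paper: both fix the approximation of $P_{n_1}$ at a finite $k$ (the paper by invoking the independence of the $k_i$ directly, you by citing Remark \ref{proj one at a time}) and then invoke Corollary \ref{X straighten braids} with $\T_{n_1}^k$ viewed as a braid sitting on $P_{n_2}$, after which the $k\tau_1$ and $k\eta_1$ shifts cancel and $N_D=N_{D'}$ since full twists do not alter strand orientations. Your bookkeeping of the suspensions and $q$-shifts is accurate and matches the paper's.
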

\begin{figure}
\centering
\vspace{.1in}\includegraphics[scale=.4]{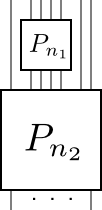}
\caption{An example of two concatenated projectors $P_{n_1}\cdot P_{n_2}$ with $n_1\leq n_2$, for which Corollary \ref{X idempotent} allows us to absorb $P_{n_1}$ into $P_{n_2}$ on the level of the homotopy types.}
\vspace{.1in}
\label{Proj Concat}
\end{figure}
\begin{proof}
As usual, we fix $j\in\mathbb{Z}$ and replace $D$ by $D(k)$ for $k>b^+$ as in the proof of Theorem \ref{Projectors X exists}.  This time however we will make stronger use of the independence of the various $k_i$ to fix $k_1>b^+_1$, while still allowing the other $k_i$ to limit towards infinity together.  In symbols, we are considering $\X^{j+N(D)+k_1\tau_1 +k\sum_{i=2}^m \tau_i}(D(k_1,k,\dots,k))$.  Having fixed $k_1$ in this way, we can view the $\T_{n_1}^{k_1}$ as a braid that is allowed to be straightened as in Corollary \ref{X straighten braids}.  When doing this, the grading shift effectively removes the $k_1\tau_1$, and there are no $-1$ terms because all of the crossings are of the form \crossing.  This leaves us with precisely 
\[\X^{j+N(D)+k_1\tau_1 +k\sum_{i=2}^m \tau_i}(D(k_1,k,\dots,k))\simeq
\X^{j+N(D)+k\sum_{i=2}^m \tau_i}(D'(k))\]
and since the $\T_{n_1}^{k_1}$ contributed only full twists to the diagram, the strand orientations before and after the straightening can be the same so that $N(D)=N(D')$.  Thus we are left with $\X^{j+N(D')+k\sum_{i=2}^m \tau_i}(D'(k))$ which is precisely the sequence needed to build $\X(D')$.
\end{proof}

\section{Applications to Quantum Spin Networks and Colored Links}
The aim of this section is to provide the necessary background in order to view Theorems \ref{Colored X exists} and \ref{Spin Network X exists} as corollaries of Theorem \ref{Projectors X exists}, and then to prove them accordingly.  In short, the `proof' for both statements is that categorified quantum invariants of spin networks and colored links are defined using diagrams involving Jones-Wenzl projectors, for which Theorem \ref{Projectors X exists} supplies a well-defined Khovanov homotopy type.  In the case of colored links, the proof of invariance requires only a few more remarks related to Reidemeister moves and framing.  The reader who is already familiar with these subjects can safely skim this section, although the notation used for colored links will be used again in the following sections related to tails.  We also state and prove a property about the colored Khovanov homotopy type of a 1-colored unknot linking as simply as possible with another colored link related to some discussions in \cite{Hog}.

\subsection{Quantum Spin Networks}

A (closed) quantum spin network (the notion dates back to Roger Penrose in \cite{Pen}) consists of a trivalent graph where each edge has been labelled with a natural number.  The labels are not entirely independent: for each vertex where three edges labelled $n_1,n_2,n_3$ meet, we must have
\begin{equation}
\label{spin network conditions}
\begin{gathered}
n_i \leq n_j+n_k \text{ }\forall\text{ } \{i,j,k\}=\{1,2,3\}\\
n_1+n_2+n_3 \equiv 0 \mod 2.
\end{gathered}
\end{equation}

From such a spin network $G$, a $q$-deformed quantum invariant can be defined as follows (see chapter 4 in \cite{KL}).  First we replace each $n$-labelled edge by a cable of $n$ parallel strands together with a copy of the Jones-Wenzl projector $P_n$.  Then we replace each vertex having edge labels $n_1,n_2,n_3$ with a `balanced splitting' of the cables as in Figure \ref{Spin vertex}.  Call the resulting diagram $D(G)$.  The final step is to evaluate the Jones polynomial of $D(G)$, using the rational expressions for the Jones-Wenzl projectors present.

\begin{figure}[h]
\centering
\vspace{.1in}\includegraphics[scale=.4]{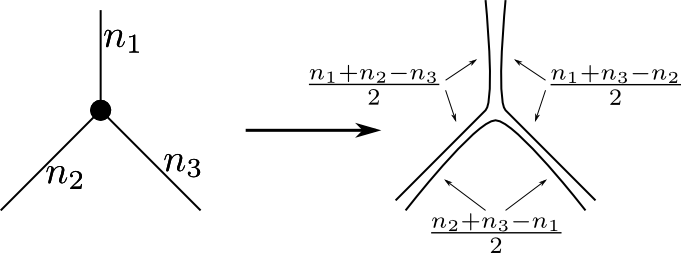}
\caption{Building the $q$-deformed invariant of a quantum spin network; the $n_i$ are labels in the original network, and the fractions on the right hand side tell how many parallel strands to send each direction from the vertex}
\vspace{.1in}
\label{Spin vertex}
\end{figure}

In \cite{CK}, Cooper and Krushkal replace the projectors in $D(G)$ with their own categorified projectors, thus defining a categorified spin network.  If instead of this we replace the projectors with Rozansky's categorifications using infinite twists, we see a diagram of the form covered by Theorem \ref{Projectors X exists}.

\begin{definition}
\label{Spin Network X def}
Given a quantum spin network $G$, we define the \emph{Khovanov homotopy type of the spin network $G$} to be $\X(G):=\X(D(G))$ as defined in Theorem \ref{Projectors X exists} for the diagram $D(G)$.
\end{definition}

\begin{proof}[Proof of Theorem \ref{Spin Network X exists}]
$\X(G)$ as defined using Theorem \ref{Projectors X exists} is clearly well-defined with regards to isotopies of the graph of $G$, which induce isotopies of $D(G)$.  There is also a `twist' move at a vertex, shown in Figure \ref{Spin twist}.  This move is accomplished by a framing twist on the strand labelled $n_1$, which would result in a shift of $q$-degree for the stable homotopy type (a framing twist creates a torus braid on the relevant cable, which can be straightened at the cost of such a shift using Corollary \ref{X straighten braids}).  This corresponds to the shift described in section 4.2 of \cite{KL}.
\end{proof}

\begin{figure}[h]
\centering
\vspace{.1in}\includegraphics[scale=.4]{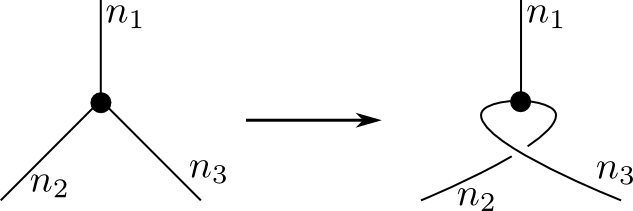}
\caption{A twist move on a spin network coming from a framing twist on the strand labelled $n_1$}
\vspace{.1in}
\label{Spin twist}
\end{figure}

\subsection{Colored Links}
%Here `colored' refers to assigning an irreducible $\mathfrak{sl}_2(\C)$ representation, as determined by its dimension $n\in\N$, to each component of the link
A coloring of a link $L$ refers to assigning an irreducible $\mathfrak{sl}_2(\C)$ representation to each component of $L$.  Such representations are characterized by their dimension $n\in\N$, allowing us to simply consider colorings as assignments of non-negative integers to each link component.

We now describe some quantum invariants of such a link.  Let $L$ be an oriented link diagram with $\ell$ components.  For each $h=1,\dots,\ell$, we color the $h^\text{th}$ component with a natural number $n_h$.  Call such a coloring $\gamma$, and the colored link $L_\gamma$.  The \emph{colored Jones polynomial} of the link $L_\gamma$ is obtained by cabling each component with its designated $n_h$ number of strands, inserting a copy of the $n_h^\text{th}$ Jones-Wenzl projector $P_{n_h}$ into each such cabled component, and then taking the usual Jones polynomial.  See Figure \ref{Colored diagrams}.

\begin{figure}[h]
\centering
\vspace{.1in}\includegraphics[scale=.5]{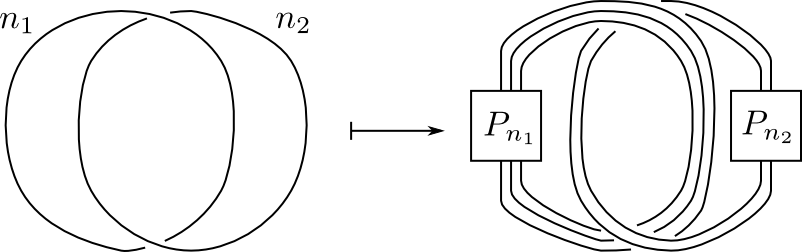}
\caption{On the left is an example $L_\gamma$, with components colored $n_1$ and $n_2$; on the right is the resulting diagram $D_{L_\gamma}$ for taking the colored Jones polynomial}
\vspace{.1in}
\label{Colored diagrams}
\end{figure}

\begin{remark}
\label{framing remark}
Note that a framing must also be designated to each component of the link to complete the definition of the invariant, but these framings can be accounted for by adding positive or negative kinks (Reidemeister 1 moves) into the diagram.  Hereafter the choice of framing will be considered specified by the presence of such kinks in the diagram for $L$, allowing all relevant information to be based on the diagram.
\end{remark}

Using categorified Jones-Wenzl projectors as in \cite{Roz} or \cite{CK} we can build a colored chain complex for the $L_\gamma$ in the same way, whose homology groups are referred to as the colored Khovanov homology of the colored link $L_\gamma$ (see \cite{CK} and \cite{Roz2}).  Using Rozansky's version of the categorified projectors allows us to prove Theorem \ref{Colored X exists}.

\begin{definition}
\label{Colored X def}
Given a colored link $L_\gamma$, we define the \emph{colored Khovanov homotopy type of $L_\gamma$} to be $\X_{c}(L_\gamma):=\X(D_{L_\gamma})$ as defined by Theorem \ref{Projectors X exists} for the diagram $D_{L_\gamma}$.
\end{definition}

\begin{proof}[Proof of Theorem \ref{Colored X exists}]
As indicated above, the colored Khovanov homology groups for a colored link $L_\gamma$ are defined by a link diagram $D_{L_\gamma}$ involving Jones-Wenzl projectors.  Therefore Theorem \ref{Projectors X exists} gives the existence of a colored Khovanov homotopy type that properly recovers the colored homology.  There is a choice of where to place the projector on each cabled component when creating $D_{L_\gamma}$.  The invariance of $\X_{c}$ with respect to such a choice is proved one $q$-degree at a time.  Since each $\X^j_c(L_\gamma)$ is equivalent to $\X^j(D_{L_\gamma}(k))$ for some large enough $k$, and $D_{L_\gamma}(k)$ is just an honest link diagram with $\T_{n_h}^k$ in place of the $P_{n_h}$, we see that these twists $\T_{n_h}^k$ can be slid up and down along the cablings, including above or below other cablings, as desired.  Similarly invariance under Reidemeister moves II and III is proved by considering the finite approximation for each $j$, where such moves give clear isotopies of honest link diagrams.  Meanwhile, Reidemeister I moves give framing shifts as expected, since undoing a kink corresponds to adding a full twist on a cable.
\end{proof}

We end this short section with a quick property of the colored Khovanov homotopy types inspired by the discussion in section 3.8 of \cite{Hog}.

\begin{definition}
\label{Hogancamp def}
Let $L_\gamma$ denote a colored link with $\ell$ components, and let $\alpha_h$ denote the component of $L_\gamma$ colored with $n_h$.  Define $L_\gamma^{o(h)}$ to be the colored link obtained from $L_\gamma$ by introducing a new unknotted, 1-colored component $\alpha_{\ell+1}$ that links positively once around the component $\alpha_h$ as in Figure \ref{Hopflike linking}.
\end{definition}

\begin{figure}[h]
\centering
\vspace{.1in}\includegraphics[scale=.4]{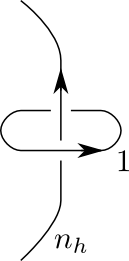}
\caption{The new unknotted, 1-colored component $\alpha_{\ell+1}$ linking positively once around the component $\alpha_h$ (colored with $n_h$), forming $L_\gamma^{o(h)}$}
\vspace{.1in}
\label{Hopflike linking}
\end{figure}

\begin{proposition}
\label{Hogancamp prop}
For any colored link $L_\gamma$ with $\ell$ components as above, the colored homotopy types of $L_\gamma$ and $L_\gamma^{o(h)}$ for any $h\in\{1,\dots,\ell\}$ fit in the following cofibration sequence:
\begin{equation}
\label{Hogancamp cofib}
\X_c^{j+1-2n_h} (L_\gamma) \hookrightarrow
\X_c^j (L_\gamma^{o(h)}) \twoheadrightarrow
\Sigma^{-2n_h}\X_c^{j-1-4n_h} (L_\gamma).
\end{equation}
\end{proposition}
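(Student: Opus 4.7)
Plan.  Following the pattern used throughout Section~3, fix $j \in \mathbb{Z}$ and pass to the finite-twist approximation of Theorem~\ref{Projectors X exists} by replacing $P_{n_h}$ with $\T_{n_h}^k$ for $k$ sufficiently large.  Isotope the meridian $\alpha_{\ell+1}$ so that it sits as a small loop encircling the $n_h$-cable of $\alpha_h$ immediately adjacent to $\T_{n_h}^k$; with orientations compatible with the positive-linking convention of Definition~\ref{Hogancamp def}, this linking region contains exactly $2n_h$ positive crossings between $\alpha_{\ell+1}$ and the cable strands, which yields the normalization identity $N_{L_\gamma^{o(h)}} = N_{L_\gamma} + 2n_h$.

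I would then resolve these $2n_h$ crossings iteratively via Lemma~\ref{Lemma3.32}, expressing $\X_c^j(L_\gamma^{o(h)})$ as an iterated mapping cone over a $2n_h$-dimensional cube of resolutions, and identify the two surviving corners of this cube.  At the all-zero vertex the meridian becomes a disjoint unknot alongside $L_\gamma$, giving $L_\gamma \sqcup U$; tracking the $q$-degree shift of $2n_h$ from the normalization difference and selecting the $v_-$ decoration of the new disjoint circle produces exactly the left term $\X_c^{j+1-2n_h}(L_\gamma)$ of~(\ref{Hogancamp cofib}).  At the all-one vertex the meridian is absorbed into the cable through $2n_h$ saddles, and after isotopy and the braid-straightening of Corollary~\ref{X straighten braids} the result is once again $L_\gamma \sqcup U$, but now shifted by an additional $2n_h$ in $q$-degree (from the $2n_h$ 1-resolutions) and by $2n_h$ in homological degree; selecting the $v_+$ decoration of the new unknot then produces the right term $\Sigma^{-2n_h}\X_c^{j-1-4n_h}(L_\gamma)$.

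The main obstacle lies in collapsing the full $2n_h$-dimensional cube down to this cofibration at the level of stable homotopy types rather than merely chain complexes.  My expectation is that each of the $2^{2n_h}-2$ intermediate cube vertices carries a partial smoothing which forces a local cup or cap adjacent to $\T_{n_h}^k$, so that (for $k$ large enough) the argument of Proposition~\ref{X killed by turnbacks} kills that vertex's contribution in the fixed $q$-degree $j$.  The ``wrong-parity'' decorations at the two extreme vertices---the $v_+$ at the all-zero corner and the $v_-$ at the all-one corner---then cancel out through the induced differentials of the cube, leaving precisely~(\ref{Hogancamp cofib}) after repeated application of Lemma~\ref{Lemma3.32}.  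Carrying out the combinatorial analysis of the partial smoothings one at a time, and verifying that the iterated collapse realizes the cofibration in the stable category, is the main technical work.
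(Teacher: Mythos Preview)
Your approach has a genuine gap, and the paper's route is both different and much shorter.  The paper resolves only a \emph{single} crossing of the meridian (the ``upper-left'' one), producing two diagrams $D_0$ and $D_1$.  The key observation is that each $D_i$ is \emph{isotopic} to a diagram $D'_i$ in which the remaining $2n_h-1$ meridian crossings have become an honest braid sitting directly atop $\T_{n_h}^k$: all crossings positive for $D'_0$, all negative for $D'_1$.  Corollary~\ref{X straighten braids} then straightens these braids, yielding $L_\gamma$ itself (no disjoint unknot appears) on both sides of the cofibration, with the grading shifts of~(\ref{Hogancamp cofib}) falling out of the bookkeeping.  No cube beyond a single edge is ever needed.

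Your plan to resolve all $2n_h$ crossings and then collapse the cube breaks at the step where you assert that every intermediate vertex carries a turnback.  This is false: the vertex obtained by $1$-resolving exactly the upper-left crossing and $0$-resolving the others is (up to isotopy) precisely the paper's $D_1$, which becomes a \emph{braid} adjacent to $\T_{n_h}^k$, not a turnback, and so survives by Corollary~\ref{X straighten braids} rather than dying by Proposition~\ref{X killed by turnbacks}.  For $n_h=1$ there is no projector at all, and the two intermediate vertices of the $2$-cube are each just $L_\gamma$ with a kink.  Moreover, even if the intermediates \emph{were} all contractible, the all-zero and all-one corners of a cube of dimension $\geq 2$ are not joined by any edge, so the totalization would split as a wedge rather than sit in a cofibration; you would then be left with both $v_+$ and $v_-$ summands of $L_\gamma\sqcup U$ at each corner---four terms rather than two---and no differential available to perform the ``wrong-parity'' cancellation you propose.
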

\begin{proof}
We focus on $\X_c^j(L_\gamma^{o(h)})$, which we build by first cabling the components and adding in right-handed (the left-handed proof is exactly the same; see Remark \ref{hog cofib rmk} at the end of the proof) $\T_n^k$'s for large $k$, resulting in the diagram $D_{L_\gamma^{o(h)}}(k)$.  In this diagram we slide the specified $\T_{n_h}^k$ along the cabling to be drawn directly below the `new' unknot $\alpha_{\ell+1}$, which is colored by $1$ so that we need no cabling or twisting for this component.  We then construct the cofibration sequence of Lemma \ref{Lemma3.32} by resolving the `upper-left' crossing (see Figure \ref{Hopflike resolutions}).

\begin{figure}[h]
\centering
\vspace{.1in}\includegraphics[scale=.4]{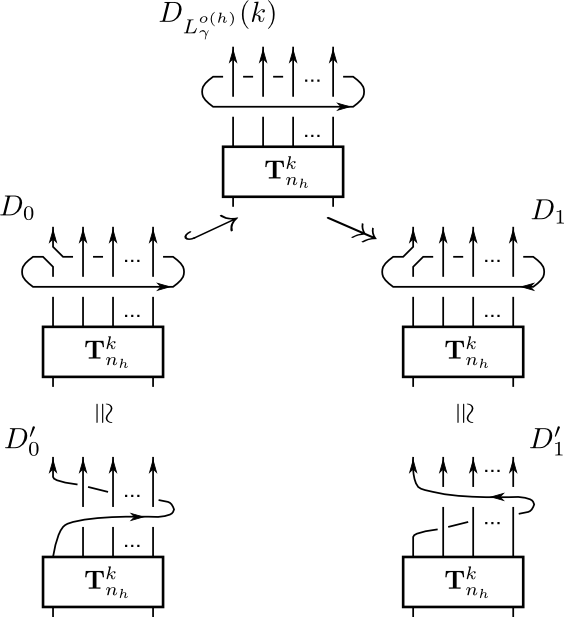}
\caption{Resolving the upper-left crossing in $D_{L_\gamma^{o(h)}}(k)$ to create a cofibration sequence.  The resulting diagrams $D_0$ and $D_1$ are isotopic to $D'_0$ and $D'_1$ which allow the use of Corollary \ref{X straighten braids}.}
\vspace{.1in}
\label{Hopflike resolutions}
\end{figure}

As illustrated in Figure \ref{Hopflike resolutions}, we denote the resulting diagrams $D_0$ and $D_1$ for the 0-resolution and 1-resolution respectively.  The 0-resolution is also the oriented one, and so the resulting shift in $q$-degree is only $-1$ for the loss of a positive crossing.  The 1-resolution allows for an orientation as shown in the diagram, where all of the previously positive crossings (there were originally $2n_h$ of them, but one was resolved) become negative.  Thus we have a $q$-degree shift of $-1$ for the loss of the resolved positive crossing, $-1$ for the loss of a 1-resolution, and $-3(2n_h-1)$ for the positive crossings becoming negative ($-1$ each for losing a positive crossing, and $-2$ each for adding a negative crossing).  We also have a homological shift of $-2n_h$ given the loss of a 1-resolution and the addition of $2n_h-1$ negative crossings.  The diagrams also make it clear that crossings away from this area retain their sign, so that these shifts are the only shifts present and we see:
\begin{equation}
\label{Hog cofib 1}
\X^{j-1+k\sum_{i=1}^\ell \tau_i} (D_0) \hookrightarrow
\X^{j+k\sum_{i=1}^\ell \tau_i} (D_{L_\gamma^{o(h)}}(k)) \twoheadrightarrow
\Sigma^{-2n_h}\X^{j+1-6n_h + k\sum_{i=1}^\ell \tau_i} (D_1).
\end{equation}

At this point, we first use an isotopy (Reidemeister moves) to rearrange $D_0$ and $D_1$ into $D'_0$ and $D'_1$ respectively (also shown in the diagram).  The $D'_0$ and $D'_1$ are then diagrams with braids above the $\T_{n_h}^k$.  The shifts in Equation (\ref{Hogancamp cofib}) are obtained from those in Equation (\ref{Hog cofib 1}) by straightening these braids (all positive crossings for $D'_0$, and all negative for $D'_1$) as in Corollary \ref{X straighten braids}.
\end{proof}

\begin{remark}
\label{hog cofib rmk}
There are similar cofibration sequences for a $(-1)$-linking unknot (ie switching the orientation of the unknot $\alpha_{\ell+1}$ in Figures \ref{Hopflike linking} and \ref{Hopflike resolutions}, and tracking the resulting $q$-degree shifts).  We can also see that in this proof, the choice of right- or left-handed twists is irrelevant, since none of the crossings or orientations of the strands passing through the twists were affected at any point.
\end{remark}

\section{A Tail for the Colored Khovanov Homotopy Type of B-Adequate Links}

\subsection{Discussion and Strategy}

This section is dedicated to proving Theorem \ref{B-adequate tail}.  Before investigating the details of the proof, we outline the general strategy and logic, expanding on the summary given in the introduction.  The goal is to adapt Rozansky's proof in \cite{Roz2} of the fact that the colored Khovanov homology groups of B-adequate links stabilize as the color goes to infinity.  The proof in that paper builds maps $f_n$ that give isomorphisms of colored homology groups between the $n$-colored and $(n+1)$-colored link $L$, but only within a certain homological range.  In order to prove Theorem \ref{B-adequate tail} then, it is enough to show that
\begin{itemize}
\item The maps $f_n$ in \cite{Roz2} are induced by maps $F_n$ between colored homotopy types, at least within the homological range of isomorphism.
\item If $n$ is large enough, the homological range of isomorphism guaranteed by Rozansky is enough to cover all non-zero homology of the corresponding colored homotopy types (and thus the $F_n$ induce isomorphisms on all homology, and so give stable homotopy equivalences by Whitehead's theorem).
\end{itemize}

Neither of these statements is difficult to prove conceptually, but the notation involved becomes somewhat cumbersome.  The reason is that, on the one hand, the colored homotopy type is a homotopy colimit, and in order to build maps we resort to finite approximations (ie the corresponding diagram with high twisting of the cables).  This requires $q$-degree shifts depending on $k$.  On the other hand, the maps $f_n$ built by Rozansky are compositions of a large number of simpler maps, many of which themselves shift the $q$-degree which will lead to separate $q$-degree shifts depending on $n$.  In addition, the maps were built with the use of the categorified Jones-Wenzl projectors rather than finite-twist approximations of them.  Thus some care will be needed.

Throughout this section, following \cite{Roz2}, all of the twisting will be \emph{left-handed} (ie, using $\X_-(D)$ from the proof of Theorem \ref{Projectors X exists}).  We recall here that, in addition to shifts of the form $k\tau$ for the normalization shift $n^+-2n^-$, the left-handed sequence also requires shifts of the form $kn(n-1)$ for counting the total number of crossings within the twist, accounting for 1-resolutions needed to move backward in the sequence.  See Equation (\ref{neg seq}).

\subsection{Notation and a Restatement of Theorem \ref{B-adequate tail}}

We begin with some notation.  Some of this is repeated from previous sections but is recalled here for convenience.

\begin{itemize}
\item $L$ denotes an oriented B-adequate link diagram, with kinks added as necessary to allow for the blackboard framing to be used.
\item $\chi$ will denote the total number of crossings in $L$.
\item $\pi$ will denote the total number of positive crossings in $L$ (only important for the homological shift, which will be ignored as often as possible).
\item $\chi^!$ will denote the total number of crossings in a minimal diagram for $L$ (only important for one key bound).
\item $\zeta$ will denote the total number of circles present in the all-1 resolution of $L$.
\item $\X_c^j(L_n)$ will denote the colored Khovanov homotopy type, in $q$-degree $j$, of the link $L$ with all of its components colored with the natural number $n$; see Definition \ref{Colored X def}.
\item For each $(n,k)\in\N^2$, $L(n,k)$ will denote the diagram obtained from $L$ by cabling all components with $n$ parallel strands, and adding a twist of $\T^{-k}_n$ to each cabling between \emph{every crossing}.  That is, if we replace $L$ with the graph with vertices at crossings and edges for strands between them, then each edge would be assigned a $\T^{-k}_n$ (see the beginning of section 4 in \cite{Roz2}).
\item $m$ will denote the total number of twistings $\T_n^{-k}$ coming from Jones-Wenzl projectors in the diagram $L(n,k)$.  This plays a similar role to $\ell$, the number of components of the link $L$, in the previous section.  However, as the previous item suggests, $m>\ell$ for our diagrams since we will be placing many such twistings on each component.
\item For any oriented diagram (link or tangle) $D$, $N_D$ will denote the normalization shift $n^+-2n^-$ counting all crossings in $D$.
\end{itemize}

The following notation is important enough to warrant its own definition.
\begin{definition}
\label{snk}
For a given $L$, the \emph{Colored $q$-degree Shift} is the integer function $s(n,k)$ that counts the normalization shift, the number of crossings, and the number of circles in the all-1 resolution of the link $L(n,k)$.  That is, with notation as above,
\begin{equation}
\label{s(n,k) definition}
s(n,k):= N_{L(n,k)}+kmn(n-1)+n^2\chi+n\zeta.
\end{equation}
\end{definition}
\begin{remark}
\label{nzeta remark}
Note that $n\zeta$ is the proper count for the number of circles in the all-1 resolution of $L(n,k)$, since the $\T_n^{-k}$'s present will become $\I_n$'s, and the all-1 resolution of a crossing coming from the original diagram gives a cabled version of the same resolution as in Figure \ref{allonecrossing}.
\end{remark}

\begin{figure}[h]
\centering
\vspace{.1in}\includegraphics[scale=.4]{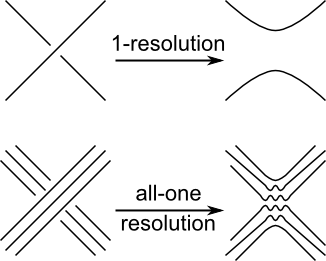}
\caption{Illustration of the all-1 resolution of a crossing in a cabled diagram}
\vspace{.1in}
\label{allonecrossing}
\end{figure}

Before moving forward, we illustrate the use of these notations to restate the result of Theorem \ref{Colored X exists}:
\begin{theorem}[Theorem \ref{Colored X exists} Restated For Uni-Colored Links]
\label{new Colored X exists}
For any oriented, colored link $L_n$ with the coloring $n$ on every component, there exists a colored Khovanov homotopy type $\X_c(L_n):=\bigvee_{j\in\mathbb{Z}} \X^{j+s(n,0)}_c(L_n)$ with wedge summands defined to be the homotopy colimits of the following sequences
\begin{equation}
\label{new Colored X equation}
\X^{j+s(n,0)}(L(n,0)) \twoheadleftarrow
\cdots
\twoheadleftarrow
\X^{j+s(n,k)}(L(n,k)) \twoheadleftarrow
\cdots
\end{equation}
which stabilize for large enough $k$.  In particular, for large enough $k$ we have a finite-twist approximation for $\X^{j+s(n,0)}_c(L_n)$ as
\begin{equation}
\label{finite-twist approx}
\X_c^{j+s(n,0)}(L_n) \simeq \X^{j+s(n,k)}(L(n,k))
\end{equation}
\end{theorem}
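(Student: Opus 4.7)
The plan is to treat Theorem \ref{new Colored X exists} as a bookkeeping translation of Theorem \ref{Colored X exists} (left-handed version) into the new $L(n,k),\,s(n,k)$ notation, rather than as an independent result. Everything I need is already supplied by Definition \ref{X- def}, Proposition \ref{main seq stab}, and Corollary \ref{X idempotent}; the work is to reconcile the grading shift $s(n,k)$ with the vertex grading formula of the left-handed cube, and then to invoke stabilization.

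First, I would explain why the diagram $L(n,k)$ is a legitimate finite-twist approximation for a diagram presenting $L_n$. Build an auxiliary diagram $D$ by cabling each component of $L$ with $n$ parallel strands and inserting a Jones-Wenzl projector $P_n$ on every arc between consecutive crossings; this places exactly $m$ projectors in the diagram. Corollary \ref{X idempotent} lets me absorb every consecutive pair of same-sized concatenated projectors along a component, so that the homotopy type $\X(D)$ agrees with the standard colored homotopy type $\X_c(L_n)$ defined from $D_{L_n}$ (one $P_n$ per component). Replacing each of the $m$ projectors in $D$ by $\T_n^{-k}$ produces exactly the diagram $L(n,k)$, so the left-handed construction of Definition \ref{X- def} describes $\X_c(L_n)$ as the homotopy colimit along a diagonal sequence of diagrams $L(n,k)$.

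Second, I would check that the grading shift $s(n,k)$ matches the vertex formula (\ref{neg cube vx}). Specializing that formula to the uniform case $n_i=n,\ k_i=k$ and $m$ twist regions gives
\begin{equation*}
\X_-^{j'+N_D}(k,\dots,k) = \Sigma^{-km\eta}\,\X^{\,j'+N_D+kmn(n-1)+k\sum_i\tau_i}\bigl(L(n,k)\bigr).
\end{equation*}
Since $N_{L(n,k)} = N_D + k\sum_i\tau_i$ by Lemma \ref{NZlemma} applied $m$ times, the exponent above equals $j' + N_{L(n,k)} + kmn(n-1)$. Reindexing by the constant $j = j' - (n^2\chi + n\zeta)$ converts this to $j + s(n,k)$, exactly as in the statement. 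The constant $n^2\chi + n\zeta$ is the natural choice aligning $j=0$ with the top of the $q$-range that survives in the stable limit: the maximum possible $q$-degree in $KC(L(n,k))$ is $N_{L(n,k)} + \#\mathrm{cros}(L(n,k)) + \#\mathrm{circ}(\text{all-one})$, and the latter two terms grow as $n^2\chi + kmn(n-1)$ and $n\zeta$ by Remark \ref{nzeta remark}, so $s(n,k)$ captures exactly the $k$-dependent part plus this natural $n$-dependent normalization, leaving $j$ as a $k$-independent index.

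Third, I would invoke the stabilization. The diagonal-colimit argument in the proof of Theorem \ref{Projectors X exists} (left-handed case) shows that each local bound $b_i^-$ from formula (\ref{b- definition}) is mutually independent, so there is a uniform threshold beyond which every edge map in the cube, restricted to $q$-degree $j+s(n,k)$, is a stable homotopy equivalence. This yields both the stabilization of the sequence in (\ref{new Colored X equation}) and the finite-twist approximation (\ref{finite-twist approx}). Finally, wedging over $j \in \mathbb{Z}$ just reindexes the wedge defining $\X_c(L_n)$ in Definition \ref{Colored X def} by the constant $s(n,0)$, which completes the translation. The main subtle point, and really the only place care is needed, is the accounting in the middle step: keeping straight which pieces of the shift depend on $k$ (and therefore must match the cube's edge maps) versus which are fixed constants used to move the index $j$ to the homologically meaningful location for the tail arguments of Section 5.
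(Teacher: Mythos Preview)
Your proposal is correct and follows essentially the same route as the paper: invoke Corollary \ref{X idempotent} to justify the extra projectors in $L(n,k)$, match the grading shift $s(n,k)$ against the left-handed vertex formula (\ref{neg cube vx}), and then cite the diagonal stabilization from the proof of Theorem \ref{Projectors X exists}. The one point you glide over is the suspension term: you display $\Sigma^{-km\eta}$ but the sequence (\ref{new Colored X equation}) you are matching has no suspensions at all, and you never say why that exponent vanishes. The paper handles this explicitly by observing that in a left-handed full twist on a parallel-oriented cable every crossing is negative, so $\eta=n(n-1)$ and the homological shift $k(-\eta+n(n-1))$ from (\ref{neg seq}) is zero; you should add that one sentence.
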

\begin{remark}
The term $s(n,0)$ is included in the original wedge summand for $\X_c(L_n)$ for convenience moving forward; note that the terms $n^2\chi$ and $n\zeta$ in Equation (\ref{s(n,k) definition}) are independent of $k$, and simply persist throughout the sequence (\ref{new Colored X equation}).
\end{remark}
\begin{proof}
This is essentially the sequence built in the proof of Theorem \ref{Projectors X exists} for $\X_-(D)$ as applied to Theorem \ref{Colored X exists}, except that extra projectors (and thus extra copies of $\T_n^{-k}$) are present.  These extra projectors cause no issues, however, thanks to Corollary \ref{X idempotent}.  In the proof of Theorem \ref{Projectors X exists}, the shift in the sequence includes a normalization term $-\sum k_i\tau_i$ and a crossing counting term $\sum k_in_i(n_i-1)$.  Here the $k_i$ and $n_i$ are all equal, and both terms are then absorbed into the shift $s(n,k)$.  Meanwhile, the left-handed twisting of a cabling where all strands are oriented the same way (in accordance with the orientation of $L$) means that all of the crossings involved are negative.  This ensures that the homological shifts cancel out (we lose negative crossings at the same rate that we lose 1-resolutions), so no suspensions are necessary.
\end{proof}

We now restate Theorem \ref{B-adequate tail} in a more precise fashion.

\begin{theorem}
\label{B-adequate tail better}
Fix an oriented B-adequate link diagram $L$.  With notation as above, there exist sequences of maps for each $j\in\mathbb{Z}$
\begin{equation}
\label{B-adequate grand sequence}
\X^{j+s(1,0)}_c(L_1) \twoheadleftarrow
\Sigma^{3\pi}\X^{j+s(2,0)}_c(L_2) \twoheadleftarrow
\cdots
\Sigma^{(n^2-1)\pi}\X^{j+s(n,0)}_c(L_n) \twoheadleftarrow
\cdots
\end{equation}
that become stable homotopy equivalences for $n>\chi^!-2j+1$.
\end{theorem}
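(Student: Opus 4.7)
The plan is to implement the two-step strategy sketched in the introduction. The first step is to produce, for each $n$, a map $F_n$ of stable homotopy types lifting Rozansky's chain-level map $f_n$ from \cite{Roz2} between the colored complexes of $L_{n+1}$ and $L_n$. Rozansky's $f_n$ decomposes as a composition of three types of elementary operations: (i) single-crossing resolutions in cabled diagrams, (ii) applications of the turnback-killing property of the categorified projectors, and (iii) applications of projector idempotency. Each of these has a spectrum-level analogue already established in Section 3: (i) is provided by the Collapsing Lemma \ref{Lemma3.32} applied to the cofibration sequences \eqref{Lemma3.32onX}; (ii) is Proposition \ref{X killed by turnbacks}; and (iii) is Corollary \ref{X idempotent}. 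I would verify, one elementary step at a time, that $F_n$ can be assembled as a composition of these spectrum-level maps, performed at the finite-twist level $L(n,k)$ with $k$ chosen sufficiently large (depending on $j$ and $n$) so that the finite-twist approximation \eqref{finite-twist approx} of Theorem \ref{new Colored X exists} realizes $\X_c^{j+s(n,0)}(L_n)$.

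The second step is to book-keep the grading shifts. The exponent $(n^2-1)\pi$ in the suspension $\Sigma^{(n^2-1)\pi}$ records the cumulative homological shifts introduced by extra negative crossings as we pass from $n$-coloring down to $1$-coloring across each of the $\pi$ positive crossings of $L$, while the $q$-degree shifts between $j+s(n+1,0)$ and $j+s(n,0)$ are absorbed by Definition \ref{snk} together with Remark \ref{nzeta remark}. The key compatibility to check is that each elementary move of Rozansky lands in the correct $q$-degree so that the composite $F_n$ respects the shifts declared in \eqref{B-adequate grand sequence}; this is a direct, if lengthy, translation of Rozansky's calculation into the spectrum-level language using the manipulations already validated for $\X_-(D)$ in Section 3.3.

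For the third step, I would invoke Whitehead's theorem in the stable homotopy category. The map $F_n$ induces $f_n$ on reduced cohomology, and Rozansky shows that $f_n$ is an isomorphism within a specific homological range that grows with $n$. To conclude a stable equivalence in a fixed $q$-degree $j+s(n,0)$, it suffices to ensure that all non-zero cohomology in that $q$-degree lies within Rozansky's range of isomorphism. A standard bound (following from Equations \eqref{homdeg} and \eqref{qdeg} applied to a minimal diagram) limits the homological spread of non-zero Khovanov cohomology in a fixed $q$-degree to a linear function of $\chi^!$; combining this with Rozansky's linear-in-$n$ range produces exactly the hypothesis $n > \chi^! - 2j + 1$. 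Since each $\X_c^{j+s(n,0)}(L_n)$ is a suspension spectrum of a CW complex, a cohomology isomorphism then upgrades to a stable homotopy equivalence.

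The hard part will be the compatibility of two distinct $k$-dependencies: Rozansky's $f_n$ is built with the full categorified projectors (morally $k = \infty$), while the present spectrum-level construction requires working at finite $k$ and then passing to the homotopy colimit. Each elementary operation in Rozansky's proof must be carried out at a finite but sufficiently large $k$, and the $q$-degree shifts introduced by the finite-twist approximation (encoded in $s(n,k)$) must exactly cancel with the shifts coming from the projector-level manipulations. I expect the bulk of the technical work to be this step-by-step verification that each of Rozansky's operations commutes with passing to the homotopy colimit of the finite-twist approximations, together with establishing the precise form of the hypothesis $n > \chi^! - 2j + 1$ from Rozansky's homological range of isomorphism.
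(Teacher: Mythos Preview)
Your overall architecture matches the paper's: build $F_n$ as a finite-twist lift of Rozansky's $f_n$, then invoke Whitehead once the fixed $q$-degree lies entirely inside Rozansky's range of isomorphism. Two points need correction.

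First, your enumeration of the elementary operations comprising $f_n$ is incomplete. Rozansky's maps use, beyond your (i)--(iii), Reidemeister moves away from projectors, sliding projectors past strands, and---crucially---the cone presentation of the categorified $P_{n+1}$ as a map $C\to\I_{n+1}$ with $C$ containing no identity tangle (see the proof of Lemma~\ref{lift Roz maps}). This last item is not subsumed by your (i)--(iii); it is what gives the short exact sequence $KC(\langle C,\Z\rangle)\hookrightarrow KC(\langle P_{n+1},\Z\rangle)\twoheadrightarrow KC(\langle\I_{n+1},\Z\rangle)$, and its lift to spectra requires returning to the finite-twist model of the projector as in \cite{Roz}. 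This is repairable, but you should not present the list as complete.

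Second, and more seriously, your third step contains a genuine gap. You claim the bound $n>\chi^!-2j+1$ follows from ``a standard bound (following from Equations~\eqref{homdeg} and~\eqref{qdeg} applied to a minimal diagram)'' on the homological spread in a fixed $q$-degree. This does not work. The colored homology is computed from the cabled diagram $L(n,k)$, whose crossing number is of order $n^2\chi+kmn(n-1)$; any elementary chain-level bound coming from \eqref{homdeg}--\eqref{qdeg} would involve $n^2$ and $k$, not $\chi^!$ alone, and would never yield a bound linear in $\chi^!$ and independent of $n$. The ingredient you are missing is Rozansky's Theorem~2.1 in \cite{Roz2} (restated as Theorem~\ref{jR bound} here): for B-adequate $L$, the shifted colored homology $\tilde{H}^{i_R,j_R}(L_n)$ vanishes when $j_R<-\tfrac{1}{2}(i_R+\chi^!)$. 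This is a nontrivial result proved via a spectral sequence built from the multicone presentation obtained by resolving the crossings of $L$ away from the projectors; B-adequacy enters here. The paper's Lemma~\ref{n bound} then converts this vanishing, via the grading dictionary $j_R=-j-i_R$, into the statement that for $n>\chi^!-2j+1$ all nonzero homology in $q$-degree $j+s(n,k)$ lies in Rozansky's isomorphism range $i_R\le n-1$. You must invoke this external vanishing theorem; it is not a ``standard bound.''
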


This version of Theorem \ref{B-adequate tail} is the desired final result.  However, as indicated in the previous section, we actually build the required maps by taking finite-twist approximations for the various $\X_c(L_n)$.  With the help of Equation (\ref{finite-twist approx}) we translate Theorem \ref{B-adequate tail better} into the following:

\begin{theorem}
\label{B-adequate tail with twists}
Fix an oriented B-adequate link diagram $L$.  With notation as above, for each $j\in\mathbb{Z}$ and for each $(n,k)\in\N^2$, there exists a map denoted $F_{n,k,j}$ as shown below:
\begin{equation}
\label{Fnk definition}
F_{n,k,j}:\Sigma^{((n+1)^2-1)\pi}\X^{j+s(n+1,k)}(L(n+1,k))\twoheadrightarrow
\Sigma^{(n^2-1)\pi}\X^{j+s(n,k)}(L(n,k))
\end{equation}
such that, for large enough $k$, the following properties both hold:
\begin{enumerate}
\item Both the $\X(L(n,k))$ and $\X(L(n+1,k))$ terms are stably homotopy equivalent to their respective colored Khovanov homotopy types, so that $F_{n,k,j}$ provides the map $F_{n,j}$ below:
\begin{equation}
\label{Fn definition}
\begin{gathered}
\Sigma^{((n+1)^2-1)\pi}\X^{j+s(n+1,0)}_c(L_{n+1})\\
\vsimeq\\
\Sigma^{((n+1)^2-1)\pi}\X^{j+s(n+1,k)}(L(n+1,k))\\
\twoheaddownarrow\\
\Sigma^{(n^2-1)\pi}\X^{j+s(n,k)}(L(n,k))\\
\vsimeq\\
\Sigma^{(n^2-1)\pi}\X^{j+s(n,0)}_c(L_n)
\end{gathered}
\end{equation}
which is used to construct the sequence (\ref{B-adequate grand sequence}).
\item For $n>\chi^!-2j+1$, the map $F_{n,k,j}$ (and thus, $F_{n,j}$) is a stable homotopy equivalence.
\end{enumerate}
\end{theorem}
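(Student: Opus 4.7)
The plan is to adapt Rozansky's proof of Theorem 2.2 in \cite{Roz2}, lifting each step from the chain-complex level to the stable homotopy category by working throughout with finite-twist approximations $L(n,k)$. At this level every object is an honest link homotopy type, and Lemma \ref{Lemma3.32} together with the properties proved in Section 3 (Proposition \ref{X killed by turnbacks}, Corollary \ref{X straighten braids}, and Corollary \ref{X idempotent}) supply enough geometric realizations of the local chain-level moves Rozansky uses to construct his $f_n$.

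First I would construct $F_{n,k,j}$. Rozansky's map between the $(n+1)$-colored and $n$-colored complexes is assembled from a composition of local operations: absorbing smaller projectors into larger ones, exploiting the turnback-killing property of projectors, and Reidemeister-type isotopies on the resulting tangle diagrams. In the finite-twist setting I would realize each such local operation one at a time: absorptions are handled by Corollary \ref{X idempotent} (applied to the $\T^{-k}$ replacements), turnback-killing is handled by Proposition \ref{X killed by turnbacks} combined with the collapse in the cofibration sequence (\ref{Lemma3.32onX}), and straightening the resulting braids uses Corollary \ref{X straighten braids}. Each local move contributes a predictable $q$- and $h$-degree shift, and carefully bookkeeping the accumulated shifts (using Definition \ref{snk} and Lemma \ref{NZlemma} to convert ``shifts from twists'' into the clean expression $s(n,k)$) should produce exactly the suspension $\Sigma^{((n+1)^2-1)\pi}$ versus $\Sigma^{(n^2-1)\pi}$ and the $q$-degree offsets $s(n+1,k)$ versus $s(n,k)$ appearing in (\ref{Fnk definition}).

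Next I would verify property (1). For any fixed $j$, Theorem \ref{new Colored X exists} guarantees that if $k$ exceeds the bound $b^-$ from Proposition \ref{main seq stab} (applied to both $L_n$ and $L_{n+1}$ with the appropriate $q$-degree), then the vertical equivalences in (\ref{Fn definition}) hold. The independence of $k$-bounds across projectors, noted in Remark \ref{proj one at a time}, lets a single $k$ work for both colors simultaneously. Compatibility of $F_{n,k,j}$ with the structure maps of the hocolim as $k$ increases (which follows because all the local operations above commute with adding more twisting, exactly as in Corollary \ref{X idempotent}) then provides a well-defined map $F_{n,j}$ between the colored homotopy types.

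Finally, for property (2), the plan is to invoke Whitehead's theorem in the stable homotopy category: $F_{n,j}$ is a stable homotopy equivalence iff the induced map on all reduced cohomology groups is an isomorphism. On cohomology, $F_{n,j}$ agrees (by construction) with Rozansky's $f_n$ restricted to the $q$-degree $j+s(n,0)$, and Theorem 2.2 of \cite{Roz2} asserts that $f_n$ is an isomorphism on colored Khovanov homology in a homological range that grows (essentially linearly) with $n$. To finish, I would translate this homological range into a $q$-degree condition by bounding the support of $\widetilde{H}^\ast(\X^{j+s(n,0)}_c(L_n))$: using the minimal-$q$-degree estimates derived in the proof of Proposition \ref{main seq stab} (of the same flavor as Lemma \ref{Counting lemma}) and Rozansky's own bounds phrased in terms of the minimal crossing number $\chi^!$, one checks that $n > \chi^! - 2j + 1$ is precisely the condition ensuring every nonzero cohomology group of $\X^{j+s(n,0)}_c(L_n)$ lies within Rozansky's range of isomorphism. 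Whitehead then upgrades the isomorphism-on-cohomology to a stable homotopy equivalence.

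The main obstacle I anticipate is not the individual lifting steps, which are each close to a direct application of a result already in Section 3, but rather the combined bookkeeping: simultaneously tracking the $q$-degree shifts coming from the finite-twist approximation (which scale with $k$ via $s(n,k)$), the shifts coming from Rozansky's sequence of local moves (which scale with $n$), and the homological suspensions $\Sigma^{(n^2-1)\pi}$, and then showing that the resulting condition for Rozansky's iso range to dominate the support of the cohomology reduces exactly to $n > \chi^! - 2j + 1$. Establishing this last inequality cleanly is where the real technical care will be required.
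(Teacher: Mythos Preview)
Your proposal is correct and follows essentially the same approach as the paper: lift Rozansky's $f_n$ move-by-move to the finite-twist approximations via Lemma \ref{Lemma3.32} and the Section 3 properties, then invoke Whitehead after translating Rozansky's homological-range bound (his Theorem 2.1, involving $\chi^!$) into the $q$-degree condition $n>\chi^!-2j+1$ by a grading conversion. One point to sharpen: the crucial color-changing step is not turnback-killing per se but Rozansky's presentation of the categorified $P_{n+1}$ as a cone on a map $C\to\I_{n+1}$ (with $C$ involving no identity tangles), which the paper lifts as a long composition of the cofibrations (\ref{Lemma3.32onX}) obtained by resolving all crossings in a $\T_{n+1}^{-k}$; this is what actually produces the surjection onto the $n$-colored side, and your list of local operations should include it explicitly.
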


Before discussing the proof of this theorem, we provide a table and example to illustrate the statement of Theorem \ref{B-adequate tail better}.  The following lemma and corollary are provided to avoid useless clutter.

\begin{lemma}
\label{max j for Colored table}
For any link $L$, and for any $n\in\N$, we have that $j=0$ gives the maximal possible $q$-degree for non-zero colored homotopy type $\X_c^{j+s(n,0)}(L_n)$.
\end{lemma}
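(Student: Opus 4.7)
The plan is to unpack the definition of $s(n,k)$ and observe that it was designed precisely to equal the maximum possible $q$-degree in the Khovanov chain complex of the finite-twist diagram $L(n,k)$. So the lemma should follow almost by construction, once we pass to a finite-twist approximation.

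First I would use the finite-twist approximation of Theorem \ref{new Colored X exists}: for any fixed $j$ and large enough $k$, we have $\X_c^{j+s(n,0)}(L_n)\simeq \X^{j+s(n,k)}(L(n,k))$. So it suffices to show that for every $j>0$ and every $k$ large enough, the chain complex $KC^{j+s(n,k)}(L(n,k))$ is trivial, which forces the corresponding space to be contractible.

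Next I would recall the standard upper bound on the $q$-grading of a generator in $KC(D)$ for any link diagram $D$: since $\deg_q$ counts $\#(\text{1-resolutions})+(\#(v_+)-\#(v_-))+N_D$, the maximum is achieved at the all-1 resolution with every circle decorated by $v_+$, giving
\begin{equation*}
\max_q(D)\;\leq\;\#\text{cros}(D)\;+\;\#\text{circ}(\text{all-1 res of }D)\;+\;N_D.
\end{equation*}
Applied to $D=L(n,k)$, the crossings decompose as $kmn(n-1)$ from the $m$ twists $\T_n^{-k}$ on the cables plus $n^2\chi$ from the cabled versions of the original crossings of $L$, while by Remark \ref{nzeta remark} the all-1 resolution contains exactly $n\zeta$ circles. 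Substituting yields
\begin{equation*}
\max_q\bigl(L(n,k)\bigr)\;\leq\;kmn(n-1)+n^2\chi+n\zeta+N_{L(n,k)}\;=\;s(n,k),
\end{equation*}
which is exactly the shift in Definition \ref{snk}.

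Therefore, for any $j>0$, the $q$-degree $j+s(n,k)$ exceeds $\max_q(L(n,k))$, so $KC^{j+s(n,k)}(L(n,k))$ is the zero complex and $\X^{j+s(n,k)}(L(n,k))\simeq *$. Combining with the finite-twist approximation gives $\X_c^{j+s(n,0)}(L_n)\simeq *$ for all $j>0$, so $j=0$ is the maximal $q$-degree in which a non-trivial summand can appear. There is no real obstacle here; the only subtle point is the bookkeeping in Definition \ref{snk}, which I would double-check to confirm that all three contributions (normalization, total crossings, all-1 circles) line up with the definition of $s(n,k)$ and do not require any additional suspension shift.
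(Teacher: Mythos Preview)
Your proposal is correct and is essentially identical to the paper's own proof: both pass to a finite-twist approximation via Theorem \ref{new Colored X exists}, then compute that the maximum possible $q$-degree of any generator in $KC(L(n,k))$ (attained at the all-1 resolution with all circles labeled $v_+$) is exactly $s(n,k)$, forcing $\X_c^{j+s(n,0)}(L_n)\simeq *$ for $j>0$. Your breakdown of the crossing count as $kmn(n-1)+n^2\chi$ and your appeal to Remark \ref{nzeta remark} for the circle count make the bookkeeping slightly more explicit than the paper does, but the argument is the same.
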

\begin{proof}
By the finite-twist approximation (\ref{finite-twist approx}), we have that
\[\X_c^{j+s(n,0)}(L_n) \simeq \X^{j+s(n,k)}(L(n,k))\]
for some large enough $k$.  The link $L(n,k)$ has Khovanov chain complex generator $z$ with maximal possible $q$-degree occurring in the all-one resolution, assigning $v^+$ to all of the circles.  Since the all-one resolution of the left-handed twists give identity braids, this generator $z$ has $q$-degree equal to:
\begin{align*}
\deg_q(z) &= \#(\text{1-resolutions}) + (\#(v_+) - \#(v_-)) + (n^+-2n^-)\\
&= \#(\text{crossings}) + \#(\text{circles}) + N_{L(n,k)}\\
&= s(n,k).
\end{align*}
which corresponds to $j=0$.
\end{proof}

\begin{corollary}
\label{no odd j for Colored table}
For any link $L$, and for any $n\in\N$, we have that $\X_c^{j+s(n,0)}(L_n)$ is trivial for odd $j$.
\end{corollary}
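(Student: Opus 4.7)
The plan is to show that, in the chain complex $KC^*(L(n,k))$ that computes $\X^{j+s(n,k)}(L(n,k))$, every Khovanov generator has $q$-degree congruent to $s(n,k)$ modulo $2$. Combined with the finite-twist approximation $\X_c^{j+s(n,0)}(L_n)\simeq \X^{j+s(n,k)}(L(n,k))$ from Theorem \ref{new Colored X exists}, this immediately gives that $\X_c^{j+s(n,0)}(L_n)$ is trivial whenever $j$ is odd.

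The first step is a standard parity observation about Khovanov $q$-degrees. Recall from Equation (\ref{qdeg}) that for a Khovanov generator $z$ (a choice of resolution together with an assignment of $v_\pm$ to each circle),
\[
\deg_q(z) = r + (\#v_+ - \#v_-) + N_{L(n,k)},
\]
where $r$ is the number of $1$-resolutions. For a fixed resolution with $c$ circles, $\#v_+ - \#v_- = c - 2\#v_-$, so $\deg_q(z)\equiv r + c + N_{L(n,k)}\pmod{2}$. Flipping a single resolution changes $r$ by $1$ and $c$ by $\pm 1$ (split or merge), so $r+c$ changes by an even number and hence its parity is independent of the resolution chosen.

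Next I evaluate this common parity on the all-one resolution, which is the most convenient to count since, by Remark \ref{nzeta remark}, the left-handed twists $\T_n^{-k}$ become identity braids and the surviving circles are the cabled versions of the all-one circles of $L$. Here $r$ equals the total number of crossings of $L(n,k)$, which is $n^2\chi + kmn(n-1)$ (cabled crossings plus twist crossings), and $c = n\zeta$. Therefore
\[
\deg_q(z) \equiv n^2\chi + kmn(n-1) + n\zeta + N_{L(n,k)} = s(n,k) \pmod{2}
\]
by the very definition of $s(n,k)$ in Equation (\ref{s(n,k) definition}). Combining with the previous paragraph, every generator of $KC^*(L(n,k))$ sits in $q$-degree of the form $s(n,k) + 2\mathbb{Z}$, so $KC^{j+s(n,k)}(L(n,k))$ vanishes for odd $j$, and hence so does $\X^{j+s(n,k)}(L(n,k))\simeq \X_c^{j+s(n,0)}(L_n)$.

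There is essentially no obstacle here: the only thing to be careful about is to verify that all the counts entering $s(n,k)$ indeed match the crossing count and all-one circle count modulo $2$, which is a direct bookkeeping check using the definition of $L(n,k)$ and Remark \ref{nzeta remark}. The homological suspensions appearing in Theorem \ref{new Colored X exists} play no role since the statement is purely about $q$-degree parity.
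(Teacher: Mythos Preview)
Your argument is correct and follows essentially the same approach as the paper: both use the finite-twist approximation and the standard fact that the parity of the Khovanov $q$-degree is constant across the chain complex, evaluating on the all-one resolution (where the $q$-degree is exactly $s(n,k)$, as in Lemma~\ref{max j for Colored table}). The only difference is that you spell out explicitly why $r+c$ has constant parity, while the paper simply cites this as a known fact and appeals to the previous lemma for the existence of a generator at $j=0$.
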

\begin{proof}
We see from the proof of Lemma \ref{max j for Colored table} that in any finite approximation for $\X_c^{j+s(n,0)}(L_n)$, there is a generator in $q$-degree corresponding to $j=0$.  The parity of $q$-degree is constant throughout the Khovanov chain complex, so we must have $j$ even.
\end{proof}

\begin{remark}
\label{meaning of j and s}
Lemma \ref{max j for Colored table} can be regarded as giving an alternative meaning for what the grading $j$, and the shift $s(n,k)$, are describing.  We see that $s(n,k)$ is precisely the maximum possible $q$-grading for the Khovanov chain complex of $L(n,k)$, and then $j$ is a measure of how far from that maximum we are.  This means $j\leq 0$, which correctly corresponds to building a power series in $q^{-1}$ for the rational terms in the decategorified setting of the projectors.
\end{remark}

We now present the general table of colored homotopy types for any link $L$ arranged to take advantage of Theorem \ref{B-adequate tail better}.

\begin{table}[h]
\vspace{.1in}
\begin{equation*}
\begin{array}{c|ccccccccc}
& j=0 && j=-2 && j=-4 && j=-6 && \dots\\
\hline
\X_c(L_1) & \X_c^{s(1,0)}(L_1) & \vee & \X_c^{s(1,0)-2}(L_1) & \vee & \X_c^{s(1,0)-4}(L_1) & \vee & \X_c^{s(1,0)-6}(L_1) & \vee & \cdots\\
& \\
\X_c(L_2) & \X_c^{s(2,0)}(L_2) & \vee & \X_c^{s(2,0)-2}(L_2) & \vee & \X_c^{s(2,0)-4}(L_2) & \vee & \X_c^{s(2,0)-6}(L_2) & \vee & \cdots\\
& \\
\X_c(L_3) & \X_c^{s(3,0)}(L_3) & \vee & \X_c^{s(3,0)-2}(L_3) & \vee & \X_c^{s(3,0)-4}(L_3) & \vee & \X_c^{s(3,0)-6}(L_3) & \vee & \cdots\\
\vdots & \vdots && \vdots && \vdots && \vdots
\end{array}
\end{equation*}
\caption{The table of uni-colored Khovanov homotopy types for a link $L$, with the vertical axis indicating color via subscript on $L$ and the horizontal axis indicating the suitably normalized $q$-degree; stabilization occurs vertically starting at a color that depends on both $j$ (the column) and $L$.}
\vspace{.1in}
\label{Colored table}
\end{table}

With Table \ref{Colored table} in mind, we can reinterpret some of the theorems stated above.

\begin{itemize}
\item Theorem \ref{new Colored X exists} guarantees that all of the colored Khovanov homotopy types in Table \ref{Colored table} exist, and Equation \ref{finite-twist approx} guarantees that any one of them is stably homotopy equivalent to the homotopy type of a finite-twist approximation $L(n,k)$.  Note that there is no single bound for $k$ that approximates all of the homotopy types in the table at once, since the bound would depend on both $j$ and $n$.
\item Theorem \ref{B-adequate tail better} asserts that there are `vertical' maps connecting all of the terms in any column of Table \ref{Colored table}, and furthermore that these maps are stable homotopy equivalences for $n>\chi^!-2j+1$.  Thus in any given column (fixed $j$) we see that the homotopy types are all stably equivalent for large enough $n$.  This is the general statement of Theorem \ref{B-adequate tail}.
\item Theorem \ref{B-adequate tail with twists} is the stepping stone to proving Theorem \ref{B-adequate tail better}.  It asserts the existence of the vertical maps \emph{after} replacing each entry in Table \ref{Colored table} by its corresponding finite-twist approximation as guaranteed by Equation \ref{finite-twist approx}.  Since we build the maps one at a time, we can focus on two adjacent entries in one column of the table (fix $j$ and focus on $n$ and $n+1$ for some $n$) and take $k$ to be larger than both stability bounds for these two entries.  Then this vertical map composes with the finite-twist approximation equivalences as in Equation (\ref{Fn definition}) to give the maps asserted by Theorem \ref{B-adequate tail better}.
\end{itemize}

To illustrate the stabilization as $n\rightarrow\infty$, we build the table for $L$ being the simplest non-trivial link, that is, the positive Hopf link.

\begin{example}
\label{Hopf link example}
Let $L$ be the positive Hopf link.  The reader can quickly verify that
\begin{gather*}
\chi=\chi^!=2\\
\zeta=2\\
N_L=2\\
\begin{aligned}
s(n,0) &= N_{L(n,0)}+0+n^2\chi+n\zeta\\
&= n^2N_L + n^2\chi + n\zeta\\
&= 4n^2+2n
\end{aligned}
\end{gather*}
which means that the bound $n>\chi^!-2j+1$ for stabilization becomes
\[n>3-2j.\]
Thus we have Table \ref{Hopf link Colored table} for the Hopf link.

\begin{table}[h]
\vspace{.1in}
\begin{equation*}
\begin{array}{c|ccccccccc}
& j=0 && j=-2 && j=-4 && j=-6 && \dots\\
\hline
\X_c(L_1) & \X_c^6(L_1) & \vee & \X_c^4(L_1) & \vee & \X_c^2(L_1) & \vee & \X_c^0(L_1)\\
& \\
\X_c(L_2) & \X_c^{20}(L_2) & \vee & \X_c^{18}(L_2) & \vee & \X_c^{16}(L_2) & \vee & \X_c^{14}(L_2) & \vee & \cdots\\
& \\
\X_c(L_3) & \X_c^{42}(L_3) & \vee & \X_c^{40}(L_3) & \vee & \X_c^{38}(L_3) & \vee & \X_c^{36}(L_3) & \vee & \cdots\\
& \\
\X_c(L_4) & \X_c^{72}(L_4) & \vee & \X_c^{70}(L_4) & \vee & \X_c^{68}(L_4) & \vee & \X_c^{66}(L_4) & \vee & \cdots\\
& \vsimeq\\
\X_c(L_5) & \X_c^{110}(L_5) & \vee & \X_c^{108}(L_5) & \vee & \X_c^{106}(L_5) & \vee & \X_c^{104}(L_5) & \vee & \cdots\\
& \vsimeq\\
\X_c(L_6) & \X_c^{156}(L_6) & \vee & \X_c^{154}(L_6) & \vee & \X_c^{152}(L_6) & \vee & \X_c^{150}(L_6) & \vee & \cdots\\
& \vsimeq\\
\X_c(L_7) & \X_c^{210}(L_7) & \vee & \X_c^{208}(L_7) & \vee & \X_c^{206}(L_7) & \vee & \X_c^{204}(L_7) & \vee & \cdots\\
& \vsimeq\\
\X_c(L_8) & \X_c^{272}(L_8) & \vee & \X_c^{270}(L_8) & \vee & \X_c^{268}(L_8) & \vee & \X_c^{266}(L_8) & \vee & \cdots\\
& \vsimeq & & \vsimeq\\
\vdots & \vdots && \vdots && \vdots && \vdots
\end{array}
\end{equation*}
\caption{The table of uni-colored Khovanov homotopy types for the positive Hopf link $L$; the vertical stable homotopy equivalences begin when $n>3-2j$, illustrated in the first two columns.}
\vspace{.1in}
\label{Hopf link Colored table}
\end{table}
Notice that in the second column of Table \ref{Hopf link Colored table}, stabilization begins after $n=8$ ($n>3-2(-2)=7$).  Also, note the absence of horizontal dots in the first row.  When $n=1$, the colored Khovanov homology (and homotopy type) is just the usual Khovanov homology (and homotopy type), which we know only exists in these 4 $q$-degrees for the positive Hopf link $L$.
\end{example}

\subsection{The Proof}

As mentioned in the discussion on strategy above, the maps $F_{n,k,j}$ will be lifts of the maps $f_n$ defined in Theorem 2.12 of \cite{Roz2}.  In that paper, Rozansky considers these as grading-preserving maps between `shifted colored Khovanov homology groups':
\begin{equation}
\label{Roz fn basic}
f_n:\tilde{H}^{i_R,j_R}(L_n) \longrightarrow \tilde{H}^{i_R,j_R}(L_{n+1})
\end{equation}
where we have used $i_R$ and $j_R$ to denote Rozansky's grading conventions.  \cite{Roz2} asserts the existence of these maps, and the fact that they are isomorphisms so long as $i_R\leq n-1$.

Here, we first provide the translation between Rozansky's grading conventions and our own.  The reader can verify from \cite{Roz2} that
\begin{align}
\label{Roz gradings}
i_R =& \#\text{cros}-\#\text{1-resolutions} = \#\text{0-resolutions}\\
j_R =& -(\#(v_+) - \#(v_-))+n\zeta
\end{align}
where the $\#\text{cros}$ term refers to the total number of crossings in the diagram $D_{L_n}$ (see Figure \ref{Colored diagrams} in Section 4).  From this and Equations (\ref{homdeg}) and (\ref{qdeg}) we see that
\begin{align}
\label{Roz to my gradings}
i &= -i_R+n^+\\
j &= (-i_R-j_R)+(n^+-2n^-)+\#\text{cros}+n\zeta
\end{align}
where the $n^+$ and $n^-$ are counting positive and negative crossings in the diagram $D_{L_n}$.  Although some further simplifications are possible, this format most clearly matches the format seen in the sequence (\ref{B-adequate grand sequence}) of Theorem \ref{B-adequate tail better} involving the $s(n,k)$ shift.

Now these colored homology groups use the diagrams $D_{L_n}$ containing the categorified Jones-Wenzl projectors.  In \cite{Roz} these categorified projectors are defined as stable limits of complexes using $\T_n^{-k}$ in place of the projectors, as in the proof of Theorem \ref{Colored X exists}.  This means that for large enough $k$ the following homology groups match:
\begin{gather*}
\tilde{H}^{i_R,j_R}(L_n)\cong \tilde{H}^{i_R,j_R}(D_{L_n}(k))\\
\tilde{H}^{i_R,j_R}(L_{n+1}) \cong \tilde{H}^{i_R,j_R}(D_{L_{n+1}}(k))
\end{gather*}
so long as $i_R\leq n-1$, the homological range which we are interested in.  Thus we may focus on these finite-twist approximations of the colored links $L_n$, and the maps $f_n$ in this context will give rise to the maps $F_{n,k,j}$ we seek.  The reader may check that the grading shifts now correspond to those present in Equation (\ref{Fnk definition}).

Now we prove the two lemmas that correspond to the two points discussed in the beginning of this section.  For the first lemma, we avoid going into detail about the precise definition of the maps $f_n$; the interested reader should consult sections 3 and 4 of \cite{Roz2}.
\begin{lemma}
\label{lift Roz maps}
The maps $f_n$ of Rozansky can be lifted to maps $F_{n,k,j}$ as in Equation (\ref{Fnk definition}).
\end{lemma}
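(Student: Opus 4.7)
The plan is to inspect the construction of Rozansky's $f_n$ in sections 3--4 of \cite{Roz2} and lift each of its elementary building blocks individually to the stable homotopy category, then compose them to obtain $F_{n,k,j}$. Rozansky assembles $f_n$ out of four basic ingredients: (a) chain maps induced by saddle cobordisms between link diagrams, (b) inclusion/projection chain maps arising when a single crossing is replaced by its 0- or 1-resolution (i.e.\ the chain-level incarnation of a mapping cone decomposition), (c) chain isomorphisms realizing planar isotopies and Reidemeister moves, and (d) chain-level implementations of the two defining properties of the categorified projectors, namely being killed by turnbacks and being idempotent. For every one of these four types of ingredients a spectrum-level analogue is already available, so the composition can be carried out one piece at a time.

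First I would fix $j\in\Z$ and choose $k$ large enough that both $\X^{j+s(n,k)}(L(n,k))$ and $\X^{j+s(n+1,k)}(L(n+1,k))$ have already stabilized to the respective colored Khovanov homotopy types; this reduces the lifting problem to the case of honest link diagrams, where the Lipshitz--Sarkar machinery applies directly. Second, I would lift each elementary ingredient: saddle cobordism maps are covered by the cobordism maps between Khovanov spectra constructed in \cite{LS}; single-crossing resolution inclusions and projections are the maps appearing in the cofibration sequence (\ref{Lemma3.32onX}); planar isotopies and Reidemeister moves are handled by the invariance of $\X(L)$ asserted in \cite{LS}; and the projector axioms are implemented at the spectrum level by Proposition \ref{X killed by turnbacks} and Corollary \ref{X idempotent}, which ensure that for $k$ sufficiently large the finite-twist approximations $\T_{n_h}^{-k}$ behave as the projectors do on the level of homotopy types.

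Third, I would verify that the composition of these spectrum-level maps induces Rozansky's $f_n$ on reduced cohomology. The identification $\tilde H^{*}(\X^{j+s(n,k)}(L(n,k))) \cong Kh^{*,j+s(n,k)}(L(n,k))$ from \cite{LS}, combined with the grading dictionary (\ref{Roz to my gradings}), means that each spectrum-level lift induces on cohomology exactly the chain-level map Rozansky uses, so the composition matches $f_n$ up to the indicated degree and grading shifts. This is where the prefactor $\Sigma^{((n+1)^2-1)\pi}$ versus $\Sigma^{(n^2-1)\pi}$ in (\ref{Fnk definition}) must be accounted for: the extra $(2n+1)\pi$ suspension reflects the difference in negative crossings between the $(n+1)$-cabled and $n$-cabled diagrams and is exactly the homological shift forced by the cofibration sequences whose projections we compose.

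The main obstacle is the bookkeeping: the composition in $f_n$ visits many intermediate diagrams whose crossing counts, circle counts and resolution counts all change, so one must carefully translate between Rozansky's $(i_R,j_R)$ grading conventions and the $(i,j)$ conventions of this paper via (\ref{Roz gradings})--(\ref{Roz to my gradings}) at every step, and keep track of $N_{L(n,k)}$, $kmn(n-1)$, $n^2\chi$, and $n\zeta$ contributions to the shift $s(n,k)$ throughout the composition. Once the shift dictionary is fixed and the ordering of the elementary operations is recorded, the existence of the lifts themselves is automatic, because each elementary operation is already realized by a map of Khovanov spectra in the works cited above or in Section 3 of the present paper; the content of the lemma is really the organization and grading compatibility, not the construction of any one new map.
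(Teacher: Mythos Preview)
Your overall strategy---decompose $f_n$ into elementary chain-level moves and lift each---matches the paper's, and most of your ingredient list is correct. However, your list of building blocks is both slightly wrong and, more importantly, incomplete in a way that matters.

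First, the minor point: saddle cobordism maps (your item (a)) do not actually appear in Rozansky's construction of $f_n$ as described in \cite{Roz2}, and the paper's proof does not invoke them. Including them is harmless but unnecessary, and referring to cobordism maps from \cite{LS} is beside the point.

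The genuine gap is that you omit the single most important ingredient. Rozansky's passage from the $(n+1)$-colored diagram to the $n$-colored one hinges on writing the categorified projector $P_{n+1}$ as a cone $C\to\I_{n+1}$ (Theorem \ref{Roz Inf Twist}, part II), which produces a short exact sequence of the shape
\[
KC\big(\la C,\Z\ra\big)\hookrightarrow KC\big(\la P_{n+1},\Z\ra\big)\twoheadrightarrow KC\big(\la \I_{n+1},\Z\ra\big).
\]
This is the paper's type 6 move, and it is \emph{not} covered by ``killed by turnbacks'' or ``idempotent'' in your item (d); those are the wrong pair of projector properties for this step. The relevant axiom here is the categorified analogue of ``the coefficient of the identity tangle in $P_{n+1}$ is $1$'', and its lift to spectra is not automatic. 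The paper handles it by observing that, at the level of a finite-twist approximation, the projection $KC(\la P_{n+1},\Z\ra)\twoheadrightarrow KC(\la\I_{n+1},\Z\ra)$ unwinds into a long composition of the single-crossing cofibration maps of (\ref{Lemma3.32onX}), resolving every crossing of $\T_{n+1}^{-k}$ one at a time. You have the right tool in your item (b), but you never identify this step in Rozansky's argument or explain that it is to be lifted via such a composition; without that identification your list does not account for how one actually gets from $L(n+1,k)$ to $L(n,k)$ on the level of spectra.

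A smaller imprecision: for the ``straightening braids'' moves the paper invokes Corollary \ref{X straighten braids} rather than Proposition \ref{X killed by turnbacks} directly; the distinction matters because the corollary is what gives the precise grading shifts you need to track.
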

\begin{proof}
The maps $f_n$ are built out of several sorts of maps corresponding to local transformations as in Section 4 of \cite{Roz2}:
\begin{enumerate}
\item Reidemeister moves involving strands away from the projectors.
\item Short exact sequences of complexes arising from resolving a crossing away from the projectors.
\item `Straightening braids' via resolving crossings adjacent to projectors.
\item Adding new $P_n$ projectors adjacent to an existing $P_{n+1}$ projector, and other similar uses of the idempotent-like behavior of the categorified projectors.
\item `Sliding' projectors above and below other strands.
\item Viewing the categorified $P_{n+1}$ as a cone of a map $C\rightarrow\I_{n+1}$ where the complex $C$ involves no identity braid diagrams (there are further grading conditions; see both \cite{Roz} and \cite{Roz2}).  This allow a short exact sequence roughly of the form $KC\left( \la C , \Z \ra \right) \hookrightarrow KC\left( \la P_{n+1} , \Z \ra \right) \twoheadrightarrow KC\left( \la \I_{n+1} , \Z \ra \right)$.
\end{enumerate}

The first two types of maps clearly extend first to the finite-twist approximations, then to the corresponding homotopy types (type 1 can be viewed as the content of Section 6 of \cite{LS}, while type 2 is Lemma \ref{Lemma3.32} also based on \cite{LS}).  Types 3 and 4 lift in a manner corresponding to the proofs of Propositions \ref{X straighten braids} and \ref{X idempotent} respectively, giving stable homotopy equivalences for large enough $k$.  Type 5 is just a combination of Reidemeister moves on the level of the finite-twist approximation, as in the proof of well-definedness of the colored homotopy type (proof of Theorem \ref{Colored X exists} in Section 4.2).

For type 6, we return to \cite{Roz} where the cone format of the categorified $P_{n+1}$ is derived based on the finite-twist approximations, which exhibit this cone structure via resolving all of the crossings in the twisting.  And so this map lifts to the homotopy type as a long composition of maps coming from the cofibrations (\ref{Lemma3.32onX}) which, on the level of homology, is precisely the desired map.

We note here that some of these maps giving stable homotopy equivalences (especially types 3 and 4) rely not just on Rozansky's bounds, but in the new setting on a proper lower bound for $k$.  Since there are only finitely many such moves used to build the map $f_n$, we can always force $k$ to be large enough to satisfy all of these lower bounds before we begin.
\end{proof}

The second lemma requires the following theorem from \cite{Roz2}.
\begin{theorem}[\cite{Roz2} Theorem 2.1]
\label{jR bound}
Using the notation of Equation (\ref{Roz fn basic}), we have that $\tilde{H}^{i_R,j_R}(L_n)=0$ for $j_R<-\frac{1}{2}(i_R+\chi^!)$.
\end{theorem}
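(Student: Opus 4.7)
The statement is attributed to Rozansky in \cite{Roz2}, so the paper itself defers to Rozansky's proof; below I sketch how I would attempt an independent proof.

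First, I would use link invariance of colored Khovanov homology to replace $L$ with a minimal B-adequate diagram $L^!$ (having $\chi^!$ crossings) and then invoke Theorem \ref{new Colored X exists} to replace $L^!_n$ by its finite-twist approximation $L^!(n,k)$ for $k$ large enough in the bidegree $(i_R, j_R)$ of interest. After this reduction it suffices to prove vanishing at the chain level for $KC^{i_R,j_R}(L^!(n,k))$.

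Second, I would make the standard chain-level observation: a Khovanov generator supported on a resolution with $c$ circles satisfies $j_R \geq -c + n\zeta$, with equality by labeling every circle with $v_+$. So it is enough to prove that every resolution of $L^!(n,k)$ satisfies the combinatorial bound
\[ c \leq n\zeta + \tfrac{1}{2}\bigl(i_R + \chi^!\bigr), \]
where $i_R$ counts the number of $0$-resolutions.

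Third, I would try to establish this bound by exploiting the B-adequacy of $L^!$. At the all-$1$ resolution of $L^!(n,k)$, the twist crossings become identity braids and the cabled original crossings contribute exactly $n\zeta$ circles, so $c - n\zeta = 0 = i_R$ there. Moving away by switching crossings to $0$-resolutions, B-adequacy and minimality should provide a pairing of switches inside each $n \times n$ cabled block so that at most half produce a net $+1$ in the circle count, yielding the $\tfrac{1}{2} i_R$ coefficient. The additive $\tfrac{1}{2}\chi^!$ should absorb a boundary contribution of $\tfrac{1}{2}$ per original crossing of $L^!$.

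The main obstacle is making the pairing argument in the third step rigorous. The naive count $c - n\zeta \leq i_R$ is too weak in the regime $i_R > \chi^!$, and recovering the correct factor $\tfrac{1}{2}$ together with the additive $\chi^!$ requires the delicate combinatorial analysis that is the technical heart of Rozansky's paper. Minimality of $L^!$ enters essentially (through $\chi^!$ rather than $\chi$), B-adequacy provides the pairing structure, and interactions between the twist crossings and the cabled crossings must be tracked carefully.
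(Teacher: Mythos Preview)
You correctly observe that the paper simply defers to \cite{Roz2} for this result; the brief remark there points to Rozansky's spectral-sequence argument, which is quite different from what you sketch. Your route has a genuine gap, and it appears already in your second step rather than your third.

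The reduction to chain-level vanishing of $KC^{i_R,j_R}(L^!(n,k))$ is too strong and in fact false. Take $L$ to be the unknot, so $\chi^!=0$ and $\zeta=1$, and set $n=2$. Then $L^!(2,k)$ is the closure of $\T_2^{-k}$, a $(2,2k)$ torus link with left-handed crossings. Any resolution with $i_R\geq 1$ zero-resolutions (the rest one-resolutions) yields exactly $i_R$ circles upon closure, since the product reduces to $e_1^{i_R}$ in $TL_2$. Hence for $i_R=6$ (available once $k\geq 3$) the all-$v_+$ generator sits at $j_R=-6+2\cdot 1=-4$, strictly below $-\tfrac{1}{2}(i_R+\chi^!)=-3$. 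So $KC^{6,-4}(L^!(2,k))\neq 0$ for all $k\geq 3$, yet the colored homology vanishes there. Your proposed inequality $c\leq n\zeta+\tfrac{1}{2}(i_R+\chi^!)$ therefore fails resolution-by-resolution; the vanishing in the theorem is a genuinely homological phenomenon, with the offending chain generators killed by differentials, and no amount of pairing in your third step can rescue a chain-level bound that is simply not true.

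The method the paper cites avoids this by never passing to a finite-twist approximation for this particular bound. Rozansky resolves only the $\chi^!$ crossings of the original minimal diagram $L^!$, producing a multicone whose vertices are closures of the categorified projectors by crossingless tangles. The associated spectral sequence (Section~5 of \cite{Roz2}) then imports the already-established homological support bounds for the projectors themselves---and it is those bounds, not any circle count, that carry the $\tfrac{1}{2}$ slope.
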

\begin{proof}
This is one of several bounds on non-zero shifted colored Khovanov homology provided by Theorem 2.1 in \cite{Roz2}.  It is treated as a corollary of Theorem 2.11 which is proved with a spectral sequence built from the multicone presentation of the colored Khovanov chain complex resulting from resolving crossings away from the projectors.  See Section 5 of that paper.
\end{proof}

Using this result we can prove the following.
\begin{lemma}
\label{n bound}
Fix $j\in\mathbb{Z}$.  Then for $n>\chi^!-2j+1$, we have (for large enough $k$)
\[H^i(\Sigma^{(n^2-1)\pi}\X^{j+s(n,k)}(L(n,k)))=0\]
for all $i<n^2\pi-n+1$, which is equivalent to all $i_R>n-1$ for $\tilde{H}^{i_R,j_R}(L_n)$.
\end{lemma}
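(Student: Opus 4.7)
The plan is to reduce the claimed vanishing to Theorem \ref{jR bound} by translating between the grading conventions used here and those used by Rozansky in \cite{Roz2}. The equivalence stated in the lemma between $i < n^2\pi - n + 1$ and $i_R > n-1$ comes from the identification $i_R = n^2\pi - i$, which is just (\ref{Roz to my gradings}) applied to the cabled diagram $L(n,k)$ once one observes that $n^+ = n^2\pi$ for $L(n,k)$ (all the left-handed twists contribute only negative crossings, since the cabled strands share a common orientation). The suspension $\Sigma^{(n^2-1)\pi}$ together with the Lipshitz-Sarkar suspension convention are what make this identification come out clean.

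For the $q$-grading, combining (\ref{qdeg}) with (\ref{Roz gradings}) and Definition \ref{snk} gives $\deg_q = s(n,k) - i_R - j_R$ for any chain in $L(n,k)$'s Khovanov complex. Since the spectrum of interest lives at $\deg_q = j + s(n,k)$, one reads off $j_R = -i_R - j$. Substituting this into the vanishing bound $j_R < -\tfrac{1}{2}(i_R + \chi^!)$ of Theorem \ref{jR bound} and simplifying yields the equivalent condition $i_R > \chi^! - 2j$. Under the hypothesis $n > \chi^! - 2j + 1$ one has $n-1 \geq \chi^! - 2j + 1 > \chi^! - 2j$, so any $i_R > n-1$ automatically satisfies $i_R > \chi^! - 2j$, and Theorem \ref{jR bound} delivers vanishing of $\tilde{H}^{i_R, j_R}(L_n)$ in this range. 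For $k$ taken large enough that the finite-twist approximation reproduces Rozansky's shifted colored Khovanov homology in the relevant homological range (which is the content of Theorem \ref{Roz Inf Twist} together with the discussion just after Lemma \ref{lift Roz maps}), this vanishing transfers back to $H^i$ of the suspended spectrum, completing the proof.

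The main obstacle is the careful bookkeeping of grading conventions: the suspension $\Sigma^{(n^2-1)\pi}$, the internal normalization $s(n,k)$, Rozansky's distinct $(i_R, j_R)$, and the $k$-dependence introduced by the finite approximation all have to be reconciled at once. The whole argument fits together because the shifts introduced in Definition \ref{snk} and Equation (\ref{Fnk definition}) were designed precisely so that the identifications $i_R = n^2\pi - i$ and $j_R = -i_R - j$ hold without further correction terms; once those identifications are in hand, the application of Theorem \ref{jR bound} is entirely mechanical.
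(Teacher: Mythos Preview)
Your proof is correct and follows essentially the same approach as the paper: translate between the $(i,j)$ and $(i_R,j_R)$ gradings to obtain $j_R=-i_R-j$, then feed this into Theorem~\ref{jR bound}. The only difference is cosmetic ordering of the algebra---you first rewrite the vanishing criterion of Theorem~\ref{jR bound} as $i_R>\chi^!-2j$ and then observe that $i_R>n-1$ dominates it under the hypothesis, whereas the paper first rewrites the hypothesis as $j>\tfrac{1}{2}(\chi^!-n+1)$ and then chains inequalities; these are the same computation.
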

\begin{proof}
For large enough $k$ we have
\[H^i(\Sigma^{(n^2-1)\pi}\X^{j+s(n,k)}(L(n,k)))\cong
H^i(\Sigma^{(n^2-1)\pi}\X^{j+s(n,0)}_c(L_n))\cong
\tilde{H}^{i_R,j_R}(L_n)\]
Definition \ref{snk} describes $s(n,k)$ as a count of normalizations, crossings, and circles.  This allows us to use Equations (\ref{qdeg}) and (\ref{Roz gradings}) to convert:
\begin{align*}
j+s(n,k) = j+N_{L(n,k)}+\#\text{crossings}(L(n,k))+n\zeta &= \#(\text{1-resolutions}) + (\#(v_+) - \#(v_-)) + N_{L(n,k)}\\
j+\#\text{0-resolutions}+n\zeta &= (\#(v_+) - \#(v_-))
\end{align*}
so that
\begin{align*}
j_R=& -\#(v_+-v_-)+n\zeta\\
=& -j - \#\text{0-resolutions}\\
=& -j-i_R.
\end{align*}
The last line follows from the fact that the suspensions are designed to ensure that $i_R$ counting 0-resolutions in $L_n$ is the same as counting 0-resolutions in the finite-twist approximation $L(n,k)$.  A similar (and simpler) conversion ensures that the bound $i<n^2\pi-n+1$ is equivalent to $i_R>n-1$.  Meanwhile, the bound $n>\chi^!-2j+1$ quickly yields 
\[j>\frac{1}{2}(\chi^!-n+1).\]
Combining all of these gives, for $n>\chi^!-2j+1$ and $i<n^2\pi-n+1$ ($i_R>n-1$),
\begin{align*}
j_R &= -j-i_R\\
&< -\frac{1}{2}(\chi^!-n+1)-i_R\\
&< -\frac{1}{2}(\chi^!+i_R)
\end{align*}
which is precisely the bound of Theorem \ref{jR bound} for zero homology as desired.
\end{proof}

\begin{proof}[Proof of Theorem \ref{B-adequate tail with twists}]
From Lemma \ref{lift Roz maps}, we have the existence of the required maps $F_{n,k,j}$ that induce isomorphisms on homology for all homological gradings corresponding to $i_R\leq n-1$.  From Lemma \ref{n bound}, once $n>\chi^!-2j+1$ all of the spaces involved have zero homology in all homological gradings corresponding to $i_R>n-1$.  Therefore for $n>\chi^!-2j+1$ the maps $F_{n,k,j}$ induce isomorphisms on all homology groups, and so by Whitehead's theorem they are stable homotopy equivalences as desired.
\end{proof}

As noted above, this provides the proof of Theorem \ref{B-adequate tail}.

\section{A More Explicit Tail for the Colored Khovanov Homotopy Type of the Unknot}

\subsection{The Idea}

In this final section we prove Theorem \ref{n to infty} by giving an alternative, more explicit proof showing the tail behavior for the colored Khovanov homotopy type of the unknot.  Since cabling an unknot with a torus braid twist simply produces the torus links $T(n,m)$, we use the notation $\X(T(n,\infty))$ for the homotopy type of the $n$-colored unknot.
\begin{remark}
\label{Tnm not Tnk}
There is an important distinction to be made here.  Earlier, the notation $\T_n^k$ was used to denote a torus \emph{braid} consisting of $k$ \emph{full (right-handed) twists}.  Now we use the notation $T(n,m)$ to denote a torus \emph{link} with $m$ \emph{fractional $\frac{1}{n}^{\text{th}}$ (right-handed) twists} as in \cite{MW}.
\end{remark}

Before going into the details of the proof, we provide a table to illustrate the goal of the construction, similar to Table \ref{Colored table} for the general case.  First, a quick lemma that can be regarded as the translation of Lemma \ref{max j for Colored table} and Corollary \ref{no odd j for Colored table} into this setting, presented to avoid needless clutter.
\begin{lemma}
\label{parity}
For $n \not\equiv j$ mod 2, $\X^j(T(n,\infty))$ is trivial.  Likewise, for $j<-n$, $\X^j(T(n,\infty))$ is trivial.
\end{lemma}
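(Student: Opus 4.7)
The plan is to reduce both claims to a minimum $q$-degree computation for a finite-twist approximation of the $n$-colored unknot, then read off the bound and parity directly from the Lipshitz-Sarkar construction.

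First I would apply the finite-twist approximation from \cite{MW} recalled at the start of this section: for each fixed $j$ there is a $q$-normalization shift $\sigma=\sigma(n,k)$ such that
\[
\X^j(T(n,\infty)) \simeq \X^{j+\sigma}(T(n,nk))
\]
for all sufficiently large $k$, where $T(n,nk)$ is the closure of the full-twist braid $\T_n^k$. Because the Lipshitz-Sarkar spectrum $\X^i(L)$ is constructed cell-by-cell from the generators of $KC^{*,i}(L)$, a wedge summand whose underlying chain group is zero is automatically stably contractible. Hence it suffices to show that $KC^{*,j+\sigma}(T(n,nk))$ vanishes whenever $n\not\equiv j\pmod 2$ or $j<-n$.

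Next I would identify the minimum $q$-degree generator. The all-zero resolution of $\T_n^k$ is the identity braid $\I_n$, whose closure is a disjoint union of $n$ circles; decorating each circle with $v_-$ produces, via equation~(\ref{qdeg}) together with the fact that every crossing of $T(n,nk)$ is positive, a generator of $q$-degree
\[
0 + (-n) + kn(n-1) \;=\; -n + kn(n-1).
\]
Any other generator either has a larger number of $1$-resolutions (strictly increasing $\deg_q$) or has some $v_-$ replaced by $v_+$ (increasing $\deg_q$ by a positive multiple of $2$), so this is the global minimum of $\deg_q$ and every nonzero chain lies in a $q$-degree of parity $-n + kn(n-1)\equiv n \pmod 2$, independent of $k$.

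Finally I would translate these two facts through the shift $\sigma$, which is calibrated precisely so that the minimum of the shifted complex lands at $j=-n$ (this is the full-twist analogue of the shift $s(n,k)$ in Definition~\ref{snk}, tracking the $kn(n-1)$ crossings contributed by each added full twist). This converts the chain-level bounds $\deg_q\geq -n+kn(n-1)$ and $\deg_q\equiv n\pmod 2$ into exactly $j\geq -n$ and $j\equiv n\pmod 2$. When either inequality is violated the corresponding chain group is zero, the Lipshitz-Sarkar cell complex has no cells, and the spectrum is trivial as claimed. The only point requiring care is matching $\sigma$ with the convention of \cite{MW}; this is routine bookkeeping rather than a conceptual obstacle, so I expect no substantive difficulty in the proof.
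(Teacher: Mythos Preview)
Your approach is essentially the same as the paper's: both reduce to a finite-twist approximation, compute the minimum $q$-degree of the Khovanov chain complex as $-n$ plus the number of crossings (achieved at the all-zero resolution with all $v_-$), and then use that the parity of $q$-degree is constant throughout the complex. One small slip: increasing the number of $1$-resolutions does not \emph{strictly} increase $\deg_q$ in general (a split can keep it equal), but this does not affect your argument, since you only need that the all-zero/all-$v_-$ generator realizes the minimum and determines the parity.
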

\begin{proof}
This is a simple consequence of Corollaries 7.2, 7.3, and 7.4 in \cite{MW}.  For $n=2$ and $n=3$, the statement is clear from the formulas presented there.  For $n\geq 4$, one can compute the minimal $q$-degree available in the all 0-resolution of the relevant torus link (which is just $-n$ + the number of crossings), which gives a minimal $q$-degree available for $\X^j(T(n,\infty))$ (notice that the degree shift in the formulas of Corollary 7.2 is precisely the number of crossings in the relevant torus link).  Since the parity of $q$-degree is constant throughout the Khovanov chain complex of a given link, the parity of this $q$-degree can also be used to prove the first statement (after the indices are shifted properly).  See Corollary \ref{no odd j for Colored table}, or Lemma 7.6 in \cite{MW}, for a more detailed discussion of this idea.
\end{proof}

With Lemma \ref{parity} in hand, we can construct Table \ref{Unknot colored table} for the colored unknot.

\begin{table}[h]
\vspace{.1in}
\begin{equation*}
\begin{array}{c|ccccccccc}
& j=0 && j=2 && j=4 && j=6 && \dots\\
\hline
\X(T(1,\infty)) & \X^{-1}(T(1,\infty)) & \vee & \X^1(T(1,\infty))\\
& \vsimeq\\
\X(T(2,\infty)) & \X^{-2}(T(2,\infty)) & \vee & \X^0(T(2,\infty)) & \vee & \X^2(T(2,\infty)) & \vee & \X^4(T(2,\infty)) & \vee & \cdots\\
& \vsimeq && \vsimeq\\
\X(T(3,\infty)) & \X^{-3}(T(3,\infty)) & \vee & \X^{-1}(T(3,\infty)) & \vee & \X^1(T(3,\infty)) & \vee & \X^3(T(3,\infty)) & \vee & \cdots\\
& \vsimeq && \vsimeq\\
\X(T(4,\infty)) & \X^{-4}(T(4,\infty)) & \vee & \X^{-2}(T(4,\infty)) & \vee & \X^0(T(4,\infty)) & \vee & \X^2(T(4,\infty)) & \vee & \cdots\\
&  \vsimeq && \vsimeq && \vsimeq && \\
\vdots & \vdots && \vdots && \vdots && \vdots
\end{array}
\end{equation*}
\caption{The table of colored Khovanov homotopy types for the unknots, notated as homotopy types of torus links, with the horizontal axis indicating suitably normalized $q$-degree; the vertical stabilizations in each column besides the first begin at $\X^0$, which corresponds to color $n=j$.}
\vspace{.1in}
\label{Unknot colored table}
\end{table}

The goal of this section will be to construct the `vertical' stable homotopy equivalences already presented in Table \ref{Unknot colored table}.  Note that, like in the general case (Table \ref{Colored table}), the $j$ terms are arranged to `start' at zero, but now \emph{increase} in the positive direction.  This stems from the fact that we will be using right-handed twists rather than the left-handed twists considered in the previous section.  Also, since $T(1,\infty)$ is just an unknot, there is no need for an infinite wedge sum in the first row (similar to the first row in Table \ref{Hopf link Colored table} for the Hopf link; see Example \ref{Hopf link example}).

The construction of these vertical maps follows a simple observation.  It is well known that the torus links satisfy $T(n,n+1)\cong T(n+1,n)$.  The sequences used to build $\X(T(n,\infty))$ in \cite{MW} were based on going from $\X(T(n,m))\hookrightarrow\X(T(n,m+1))$.  We can combine these two ideas to see a `diagonal' sequence of the form (omitting the $T$ from the notation):

\begin{equation}
\label{n to infty idea}
\begin{array}{ccccccccccc}
\X(n,n-1) & \hookrightarrow & \X(n,n) & \hookrightarrow & \X(n,n+1) & \hookrightarrow & \cdots\\
& & & & \vsimeq \\
& & & & \X(n+1,n) & \hookrightarrow & \X(n+1,n+1) & \hookrightarrow & \X(n+1,n+2) & \hookrightarrow & \cdots\\
& & & & & & & & \vsimeq\\
& & & & & & & & \X(n+2,n+1) & \hookrightarrow & \cdots\\
& & & & & & & & & \ddots
\end{array}
\end{equation}

If we can find a lower bound on $n$ so that all of these maps are stable homotopy equivalences, including the horizontal dots (indicating that in fact $\X(n,n-1)\simeq\X(n,\infty)$, and similarly for the other rows), we would have stable equivalences between homotopy types $\X(n,\infty)$ as $n\rightarrow\infty$ as desired.  Of course, this cannot be done once and for all; instead, it is done one (shifting) $q$-degree at a time.  The vertical equivalences in Table \ref{Unknot colored table} will be precisely the resulting maps.

\subsection{The Proof}

The techniques used in this section borrow as much from the results in \cite{MW} as from the ideas in this paper.  Thus we recall some notation from \cite{MW} below.

\begin{definition}
\label{old limiting def}
\item $\X(T(n,\infty))=\bigvee_{j\in\Z} \X^{j-n}(T(n,\infty))$, where for each $j\in\mathbb{Z}$,
\begin{equation}
\label{old limiting eqn}
\X^{j-n}(T(n,\infty)):= \text{hocolim}\left[\X^{j-n}(T(n,0))\hookrightarrow\cdots\hookrightarrow
\X^{(j-n)+m(n-1)}(T(n,m))\hookrightarrow\cdots\right].
\end{equation}
\end{definition}

This is the sequence making up the horizontal maps in the conceptual equation (\ref{n to infty idea}) above.  The original definition in \cite{MW} did not include the extra $q$-degree shift of $-n$ in the definition; this term has been included here for convenience, as suggested by the format of Table \ref{Unknot colored table}.  In fact this shift plays a role similar to that of the term $s(n,0)$ in Section 5.  (Indeed, since the unknot has no crossings and one circle in any resolution, this term is precisely $s(n,0)$; the negation is because we will be considering right-handed twisting rather than left-handed).

In \cite{MW} we prove that such sequences of maps become homotopy equivalences for large enough $m$.  We restate the precise result here, as we shall need a small improvement to the bound as well as a careful translation of the $q$-degrees to our new setting.

\begin{theorem}[Theorem 4.1 from \cite{MW}]
\label{old stab result}
Fix $a\in\Z$ and $n\in\N$.  Define $f(a,n):=\max(\frac{a+n-1}{n},n)$.  Then for any $m\geq f(a,n)$,
\begin{equation}
\label{old stab eq}
\X^a(T(n,m))\hookrightarrow\X^{a+n-1}(T(n,m+1))
\end{equation}
is a stable homotopy equivalence.
\end{theorem}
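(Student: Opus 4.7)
The plan is to adapt the resolution-counting strategy that underlies the stabilization arguments throughout this paper (e.g.\ Proposition \ref{main seq stab} and Proposition \ref{X killed by turnbacks}) to the specific geometry of the torus links. Passing from $T(n,m)$ to $T(n,m+1)$ adjoins a fractional twist consisting of $n-1$ new positive crossings, and the inclusion in question is induced by taking the all-zero resolution of these $n-1$ crossings. I would order them (say from the strand that wraps ``innermost'' outward) and factor the given inclusion as a composition of $n-1$ inclusions obtained from the cofibration sequence \eqref{Lemma3.32onX}, one for each crossing. By Lemma \ref{Lemma3.32}, each such inclusion is a stable homotopy equivalence provided that the 1-resolution partner, in the appropriate shifted $q$-degree, has acyclic Khovanov chain complex.

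The core of the proof is then to show acyclicity of each of these 1-resolution diagrams in the relevant $q$-degree. Taking a 1-resolution of one of the new crossings of $T(n,m+1)$ produces a turnback configuration; using the turnback-pulling argument of Lemma \ref{Counting lemma} (in its torus-link form), this turnback can be swung through the remaining twisting to give a diagram isotopic to a torus link on $n-2$ strands with a local cap/cup insertion. The minimum $q$-degree of non-zero Khovanov homology for the resulting diagram is bounded below by the $q$-degree of the all-zero resolution decorated with $v_-$ on every circle. After accounting for the $n^+-2n^-$ normalization shifts via the $\tau$, $\tau'$ formalism of Definition \ref{tau N def} and Equation \eqref{pos tau'}, this lower bound grows linearly in $m$ with slope roughly $n$. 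Comparing with the shifted target $q$-degree $a+n-1-1$ on the left side of the cofibration yields an inequality that is satisfied once $m \geq \tfrac{a+n-1}{n}$, giving the first term in the definition of $f(a,n)$.

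The second term $m\geq n$ in $f(a,n)=\max(\tfrac{a+n-1}{n},n)$ should come from a separate geometric constraint: in order for the turnback-pulling to land us in the $(n-2)$-strand torus-link regime (rather than getting tangled up with the freshly introduced crossings themselves), the diagram $T(n,m+1)$ must contain enough fractional twists for the turnback to wrap cleanly through. Requiring $m\geq n$ is the simplest sufficient condition, and it matches the bound from \cite{MW}. With both pieces in place, for $m\geq f(a,n)$ every one of the $n-1$ cofibrations in the factorization has acyclic 1-resolution contribution in the correct $q$-degree, so each small inclusion is a stable homotopy equivalence and their composition is as well.

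The main obstacle I anticipate is the careful bookkeeping of $q$-degree shifts across the $n-1$ successive crossing resolutions, since resolving one of the new crossings changes the diagram in which the next turnback lives and alters the $\tau$ count; this was the technical heart of the proofs of Propositions \ref{building seq} and \ref{main seq stab}, handled there via the auxiliary diagrams $\D_i,\E_i$ and Lemma \ref{NZlemma}. A mild simplification here is that, for the right-handed torus twists $T(n,m+1)$ with coherent strand orientations, every crossing involved is positive, so the orientation analysis collapses and only the first term of $f(a,n)$ requires real counting. The ``small improvement'' over the original bound in \cite{MW} alluded to in the paragraph preceding the theorem should emerge from being slightly more economical in estimating the number of circles in the all-zero resolution of the cap/cup diagram, tightening the additive constant without changing the overall linear dependence on $a$ and $n$.
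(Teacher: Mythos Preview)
The paper does not prove this theorem; it is quoted verbatim as Theorem 4.1 of \cite{MW} and invoked without argument. So there is no ``paper's own proof'' to compare to beyond the citation, and your task is really to reconstruct the argument in \cite{MW}. Your overall architecture is correct and matches that source: factor the inclusion through $n-1$ cofibrations coming from resolving the $n-1$ new crossings one at a time, pull the turnback in each $1$-resolution $E_i$ through the remaining torus braid, and bound the minimum $q$-degree of the result to force acyclicity in the required grading.

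That said, two of your specific claims are off. First, it is not true that ``every crossing involved is positive'' for the torus links in question: the relevant count in \cite{MW} (and in the appendix of the present paper) is precisely the number $c_i$ of \emph{negative} crossings in each $E_i$, and the key lemma is Sto\v{s}i\'{c}'s bound $c_i\geq n+m-2$ (Lemma 3.5 of \cite{MW}, originally Lemma 1 of \cite{Sto}). These negative crossings arise because resolving a crossing in $T(n,m+1)$ changes the connectivity and hence the allowable orientations of the strands; the orientation analysis does \emph{not} collapse. Second, the condition $m\geq n$ in $f(a,n)$ is not a qualitative ``enough room to wrap cleanly'' requirement---it is exactly the hypothesis under which Sto\v{s}i\'{c}'s bound on $c_i$ was proved in \cite{MW}. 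Correspondingly, the ``small improvement'' to $m\geq n-1$ in Lemma \ref{new stab m=n-1} is obtained (see the appendix) by directly verifying $c_i=2n-3$ when $m=n-1$, i.e.\ by extending the negative-crossing count to one more case, not by tightening a circle-count estimate in an all-zero resolution as you suggest.
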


Our new improvement on the bound is very slight, but crucial.

\begin{lemma}
\label{new stab m=n-1}
The bound $f(a,n)$ in Theorem \ref{old stab result} can be improved to a new bound
\begin{equation}
\label{new stab bound}
f'(a,n):= \max(\frac{a+n-1}{n},n-1).
\end{equation}
\end{lemma}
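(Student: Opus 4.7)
The plan is to sharpen the integer floor in the threshold $f(a,n)$ from $n$ down to $n-1$, while leaving the $(a+n-1)/n$ branch untouched. Since the second branch is unchanged, it suffices to verify that $\X^a(T(n,n-1))\hookrightarrow\X^{a+n-1}(T(n,n))$ is already a stable homotopy equivalence whenever $a\le (n-1)^2$, i.e., whenever $(a+n-1)/n\le n-1$; outside this regime the new bound $f'(a,n)$ coincides with $f(a,n)$ and there is nothing to prove.

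First I would factor this inclusion, following the original proof of Theorem \ref{old stab result}, as a composition of $n-1$ Collapsing-Lemma inclusions (Lemma \ref{Lemma3.32}), one for each of the new crossings introduced as $m$ goes from $n-1$ to $n$. At each step, the intermediate $\E_i$-diagram contains a single turnback, and stable homotopy equivalence of that step reduces to the acyclicity of the Khovanov chain complex of the corresponding diagram in the specified $q$-degree. Using the counting lemma (Lemma \ref{Counting lemma}), I would then pull each turnback through the remaining twisting to obtain an $(n-2)$-stranded diagram of the form $\la T_{n-2}^{(\cdot)}, \E'_i\ra$ with a controlled shift in $\tau$, reducing acyclicity to a $\min_q$ estimate entirely parallel to those used in Proposition \ref{main seq stab}.

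The main obstacle, which is likely the reason the proof is deferred to the Appendix, is extracting one extra unit of slack from that $\min_q$ estimate. In [MW] the corresponding inequality is strict precisely for $m\ge n$; strictness at the boundary $m=n-1$ requires sharpening the lower bound on $\min_q$ by a single unit. I would seek this saving in a careful count of the circles appearing in the all-zero resolution of the reduced diagram $\la \I_{n-2}, \E'_i\ra$: at $m=n-1$ the path traced by the pulled-through turnback is geometrically constrained enough to produce at least one extra circle beyond the generic tally used in [MW], and each such circle lowers the $\min_q$ bound by one, converting the weak inequality at $m=n-1$ into the strict inequality required to invoke the Collapsing Lemma. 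The bookkeeping must be verified uniformly across all $n-1$ intermediate diagrams $\E_i$; an alternative, possibly cleaner, route is to exploit the torus-link isotopy $T(n,n-1)\cong T(n-1,n)$ to reduce the boundary case to the $(n-1)$-strand instance, where the original bound already applies.
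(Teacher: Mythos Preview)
Your overall framework is correct: the only new content is the boundary case $m=n-1$, and that case is handled by factoring the inclusion into $n-1$ single-crossing steps and showing each intermediate $E_i$-diagram has acyclic Khovanov complex in the relevant $q$-degree. Beyond that, however, your proposed mechanism diverges from the paper's, and the divergence matters.

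The paper does \emph{not} seek the improvement in a circle count or a $\min_q$ estimate of the Proposition~\ref{main seq stab} type. Instead it works entirely inside the framework of \cite{MW}: the only place the hypothesis $m\ge n$ enters the proof of Theorem~\ref{old stab result} is in \cite{MW} Lemma~3.5, which asserts $c_i\ge n+m-2$ where $c_i$ is the number of \emph{negative crossings} in $E_i$. The Appendix simply verifies that this same inequality already holds at $m=n-1$, by a direct geometric count showing $c_i=2n-3$ for every $i$ (tracing the turnback around the braid, tallying Reidemeister~1 versus Reidemeister~2 eliminations, and noting that exactly half the Reidemeister~2 crossings are negative while the two Reidemeister~1 crossings are both negative). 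In particular there is no ``extra unit of slack'' to find: the bound $c_i\ge n+m-2$ holds with equality at $m=n-1$; it is only the \emph{proof} in \cite{MW} that required $m\ge n$, not the bound itself. Your suggestion that an extra circle appears in the all-zero resolution is unsubstantiated and, as far as I can see, not where the gain lives; the relevant quantity is the normalization shift governed by $c_i$, not the circle term. Note also the Remark in the Appendix: the $D_i$, $E_i$ here are those of \cite{MW}, not those of Proposition~\ref{main seq stab}, so Lemma~\ref{Counting lemma} (which is calibrated for full twists) is not the right tool.

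Finally, your alternative route via $T(n,n-1)\cong T(n-1,n)$ does not reduce the problem cleanly: the map whose equivalence you need lands in $\X(T(n,n))$, which is not on the $(n-1)$-strand sequence, so the original bound for $n-1$ strands does not directly apply to it.
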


The proof of Lemma \ref{new stab m=n-1} requires one small addition to the proof of Lemma 3.5 in \cite{MW}, which itself is a result of Sto\v{s}i\'{c}.  Since this will require re-introducing several notations from \cite{MW} that are not used elsewhere in this paper, we relegate this proof to an appendix.  Meanwhile, the following corollary translates the result above for use with the sequences (\ref{old limiting eqn}).

\begin{corollary}
\label{new stab bound for seq}
Fix $j\in\mathbb{Z}$ and $n\in\N$.  Then for $m\geq\max(j-1,n-1)$, the sequence (\ref{old limiting eqn}) stabilizes.  That is, the maps
\[\X^{(j-n)+m(n-1)}(T(n,m))\hookrightarrow\X^{(j-n)+(m+1)(n-1)}(T(n,m+1))\]
are stable homotopy equivalences.
\end{corollary}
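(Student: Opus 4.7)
The plan is to directly substitute into the improved bound from Lemma \ref{new stab m=n-1}. The map in question has the form $\X^a(T(n,m)) \hookrightarrow \X^{a+n-1}(T(n,m+1))$, which is exactly the format of Theorem \ref{old stab result}, with $a := (j-n) + m(n-1)$. So I would just invoke the improved version: the map is a stable homotopy equivalence as soon as
\[
m \;\geq\; f'(a,n) \;=\; \max\!\left(\frac{a+n-1}{n},\, n-1\right)
\;=\; \max\!\left(\frac{(j-n) + m(n-1) + n - 1}{n},\, n-1\right).
\]

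The next step is to simplify the first argument of the $\max$. Writing $(j-n) + m(n-1) + (n-1) = j - 1 + m(n-1)$ and dividing by $n$ gives $\tfrac{j-1}{n} + m\cdot\tfrac{n-1}{n}$, so the inequality $m \geq \tfrac{a+n-1}{n}$ becomes $\tfrac{m}{n} \geq \tfrac{j-1}{n}$, i.e. $m \geq j - 1$. Therefore the condition $m \geq f'(a,n)$ collapses to $m \geq \max(j-1,\, n-1)$, which is exactly the hypothesis of the corollary. Once this holds for one value of $m$ in the sequence, it holds for every subsequent $m$ as well (since increasing $m$ only makes the first argument of the $\max$ smaller relative to $m$), so all maps from the chosen $m$ onward are stable homotopy equivalences, and the sequence stabilizes in the sense required by Definition \ref{old limiting def}.

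There is essentially no obstacle here beyond the algebraic manipulation above; all the work has already been done in Lemma \ref{new stab m=n-1}. The only point that deserves a sentence of care in the write-up is the shift: the sequence (\ref{old limiting eqn}) in Definition \ref{old limiting def} carries the $-n$ normalization built in, whereas Theorem \ref{old stab result} is phrased purely in terms of an absolute $q$-degree $a$ on the torus link $T(n,m)$. The translation between the two is captured by the equation $a = (j-n) + m(n-1)$ used above, and verifying that this is the correct substitution (so that the target degree $a+n-1$ matches $(j-n)+(m+1)(n-1)$) is a line of arithmetic. After that, the corollary is immediate.
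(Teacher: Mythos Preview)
Your proposal is correct and matches the paper's own proof essentially verbatim: both substitute $a=(j-n)+m(n-1)$ into the improved bound $f'(a,n)=\max\!\left(\frac{a+n-1}{n},\,n-1\right)$ from Lemma~\ref{new stab m=n-1} and reduce the first argument algebraically to $m\geq j-1$. The extra remarks you include about persistence for larger $m$ and about the $-n$ shift are fine elaborations but not needed for the argument.
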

\begin{proof}
The bound $m\geq n-1$ is the improvement (over $m\geq n$) of Lemma \ref{new stab m=n-1}.  Meanwhile, the term $\frac{a+n-1}{n}$ in the bound (\ref{new stab bound}) contains the $q$-degree $a$ which corresponds here to $(j-n)+m(n-1)$.  Some simple algebra ensures that
\[ m \geq \frac{a+n-1}{n} \hspace{.1in} \Longleftrightarrow \hspace{.1in} m\geq j-1.\]
\end{proof}

With these bounds in place, we are ready to provide the vertical equivalences of Table \ref{Unknot colored table} via the idea of Equation (\ref{n to infty idea}).

\begin{lemma}
\label{n to infty l1}
Fix $j\in\left(2\N\cup 0\right)$.  Then for $n\geq j$, we have
\begin{equation}
\label{n to infty eq1}
\begin{gathered}
\X^{j-n}(T(n,\infty)) \simeq \X^{j-n+(n-1)^2}(T(n,n-1))\\
\X^{j-(n+1)}(T(n+1,\infty)) \simeq \X^{j-(n+1)+n^2}(T(n+1,n))
\end{gathered}
\end{equation}
\end{lemma}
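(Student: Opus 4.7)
The plan is to apply Corollary \ref{new stab bound for seq} directly to the two sequences defining $\X^{j-n}(T(n,\infty))$ and $\X^{j-(n+1)}(T(n+1,\infty))$, identifying the earliest index $m$ at which each sequence stabilizes. Once stabilization is established, the homotopy type in the homotopy colimit is the homotopy type at that first stable index.

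For the first equivalence, recall from Definition \ref{old limiting def} that $\X^{j-n}(T(n,\infty))$ is the homotopy colimit of the sequence
\[\X^{j-n}(T(n,0)) \hookrightarrow \cdots \hookrightarrow \X^{(j-n)+m(n-1)}(T(n,m)) \hookrightarrow \cdots\]
By Corollary \ref{new stab bound for seq}, the maps in this sequence become stable homotopy equivalences once $m \geq \max(j-1, n-1)$. Under the hypothesis $n \geq j$ (so $n-1 \geq j-1$), this bound reduces to $m \geq n-1$. Hence $m = n-1$ already realizes the colimit up to stable equivalence, giving
\[\X^{j-n}(T(n,\infty)) \simeq \X^{(j-n)+(n-1)(n-1)}(T(n,n-1)) = \X^{j-n+(n-1)^2}(T(n,n-1)).\]

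For the second equivalence, the same corollary applied to color $n+1$ tells us the sequence defining $\X^{j-(n+1)}(T(n+1,\infty))$ stabilizes once $m \geq \max(j-1, n)$. Again using $n \geq j > j-1$, the binding constraint is $m \geq n$, so $m = n$ suffices and we obtain
\[\X^{j-(n+1)}(T(n+1,\infty)) \simeq \X^{(j-(n+1))+n \cdot n}(T(n+1,n)) = \X^{j-(n+1)+n^2}(T(n+1,n)).\]

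There is no real obstacle here: the work has already been done in proving Corollary \ref{new stab bound for seq} (which in turn relies on the improved bound of Lemma \ref{new stab m=n-1}). The only thing to check carefully is that the hypothesis $n \geq j$ is exactly what makes the ``$n-1$'' term (respectively the ``$n$'' term) dominate the ``$j-1$'' term in the max, so that the smallest stable index matches the $m = n-1$ (respectively $m = n$) value appearing in the statement. The result then feeds directly into the diagonal argument of Equation (\ref{n to infty idea}): combined with the isotopy $T(n,n-1) \cong T(n-1,n)$, these equivalences will produce the vertical stable homotopy equivalences displayed in Table \ref{Unknot colored table}.
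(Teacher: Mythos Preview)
Your proposal is correct and follows essentially the same approach as the paper's own proof: both apply Corollary \ref{new stab bound for seq} directly, observing that the hypothesis $n\geq j$ makes the $n-1$ term (respectively the $n$ term for color $n+1$) dominate in the bound $\max(j-1,n-1)$, so that stabilization occurs at $m=n-1$ (respectively $m=n$). The paper's proof is terser, simply noting that $n\geq j$ implies $n+1\geq j$ for the second equivalence, whereas you spell out the shifted bound $\max(j-1,n)$ explicitly.
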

\begin{proof}
This follows directly from Corollary \ref{new stab bound for seq}.  When $n\geq j$, the $n-1$ term dominates in the bound $m\geq\max(j-1,n-1)$, allowing the sequence (\ref{old limiting eqn}) to stabilize as soon as $m=n-1$.  Of course, if $n\geq j$, then $n+1\geq j$ as well.
\end{proof}

\begin{lemma}
\label{phi lemma}
For $n\geq j$ as above, define the map $\phi_{n,j}$ to be the composition below.
\begin{gather*}
\X^{j-n+(n-1)^2}(T(n,n-1))\\
\downarrow\\
\X^{j-n+(n-1)^2+(n-1)}(T(n,n))\\
\downarrow\\
\X^{j-n+(n-1)^2+2(n-1)}(T(n,n+1))\\
\vsimeq\\
\X^{j-(n+1)+n^2}(T(n+1,n))
\end{gather*}
where the first two maps are the same maps appearing in the sequence (\ref{old limiting eqn}), and the final equivalence comes from the isotopy $T(n,m)\cong T(m,n)$.  Then $\phi_{n,j}$ defines a stable homotopy equivalence
\[\phi_{n,j}:\X^{j-n+(n-1)^2}(T(n,n-1)) \xrightarrow{\simeq} \X^{j-(n+1)+n^2} (T(n+1,n)).\]
\end{lemma}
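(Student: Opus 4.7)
The plan is to verify that each of the three arrows in the defining composition of $\phi_{n,j}$ is individually a stable homotopy equivalence under the standing hypothesis $n\geq j$, and then conclude that their composition is one. No new ideas beyond what has already been set up in Sections 2 and 6 are required; the work is essentially bookkeeping together with two applications of Corollary \ref{new stab bound for seq}.

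First I would check the $q$-degree arithmetic along the composition to confirm that it really is an instance of the stabilization sequence of Definition \ref{old limiting def}. For $m=n-1$ one has $(j-n)+m(n-1)=j-n+(n-1)^2$, for $m=n$ one has $(j-n)+n(n-1)=j-n+(n-1)^2+(n-1)$, and for $m=n+1$ one has $(j-n)+(n+1)(n-1)=j-n+n^2-1=j-(n+1)+n^2$, so the arrows of the sequence (\ref{old limiting eqn}) indeed land on the stated spectra.

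Next I would apply Corollary \ref{new stab bound for seq} to each of the first two arrows. The corollary says that the step $m\to m+1$ of the sequence is a stable homotopy equivalence as soon as $m\geq\max(j-1,n-1)$. The first arrow corresponds to $m=n-1$, and the hypothesis $n\geq j$ gives $n-1\geq j-1$, so the bound is met; the second arrow corresponds to $m=n$, for which the bound is trivial. The third arrow is just the isotopy invariance of the Khovanov homotopy type applied to the classical equivalence $T(n,n+1)\cong T(n+1,n)$, whose effect on the $q$-grading has already been checked above. The composition of two stable homotopy equivalences with an isotopy-induced equivalence is a stable homotopy equivalence, so $\phi_{n,j}$ is the desired map.

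The only subtle ingredient is that the first arrow sits right at the boundary $m=n-1$ of the stable range; without the sharpening from Lemma \ref{new stab m=n-1} it would only be covered by $m\geq n$, and the entire argument would have to start one step later, destroying the identification with $T(n+1,n)$ via $T(n,n+1)\cong T(n+1,n)$. Thus I expect the main (indeed the only) nontrivial obstacle in this lemma to be precisely the tightness of the bound supplied by Lemma \ref{new stab m=n-1}; granted that, the proof of Lemma \ref{phi lemma} is just composition and a parity-free arithmetic check.
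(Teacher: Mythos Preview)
Your proposal is correct and follows essentially the same approach as the paper's own proof, which simply invokes Corollary \ref{new stab bound for seq} to conclude that the first two maps are stable homotopy equivalences (the third being an isotopy-induced equivalence). Your version is more detailed in that you explicitly carry out the $q$-degree arithmetic and highlight the role of the sharpened bound from Lemma \ref{new stab m=n-1}, but the substance is the same.
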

\begin{proof}
As in the previous lemma, the first two maps are stable homotopy equivalences due to the bound in Corollary \ref{new stab bound for seq}.
\end{proof}

\begin{remark}
We see that this map $\phi_{n,j}$ plays a role similar to that of the $F_{n,j}$ of the previous section, but is much easier to define than the maps $f_n$ in \cite{Roz2} that lead to $F_{n,j}$.
\end{remark}

\begin{proof}[Proof of Theorem \ref{n to infty}]
Combining Lemmas \ref{n to infty l1} and \ref{phi lemma} gives stable homotopy equivalences
\begin{align*}
\X^{j-n}&(T(n,\infty))\\
\text{Lemma \ref{n to infty l1}}\hspace{.5in}&\vsimeq \\
\X^{j-n+(n-1)^2}&(T(n,n-1))\\
\text{Lemma \ref{phi lemma}}\hspace{.5in}&\vsimeq\\
\X^{j-(n+1)+n^2}&(T(n+1,n))\\
\text{Lemma \ref{n to infty l1}}\hspace{.5in}&\vsimeq \\
\X^{j-(n+1)}&(T(n+1,\infty))
\end{align*}
for arbitrary $n\geq j$, which gives all of the necessary stable homotopy equivalences as indicated in Table \ref{Unknot colored table}.  The calculations presented in Theorem \ref{n to infty} refer to the beginning of the stabilization, that is when $n=j$ so that we are considering $\X^0(T(j,\infty))$.  A further application of Corollary \ref{new stab bound for seq} shows that $\X^0(T(j,\infty))\simeq \X^{(j-1)^2}(T(j,j-1))$ so long as $j>0$, while the $j=0$ case stabilizes immediately (ie for $n=1$) giving the homotopy type of an unknot, which is known to be the sphere spectrum in $q$-degrees $\pm 1$.
\end{proof}

\begin{remark}
\label{n to infty unlink}
It is clear that a similar argument could be used to define $\X(U_\gamma)$ for an unlink $U$ allowing the colors on each component to tend to infinity.  We do not go through the calculation here.
\end{remark}

We conclude with a brief discussion on the differences between the new approach of this section and the general approach of the previous one.  One difference is that we use right-handed twisting in this new approach, but this is of no consequence and a left-handed version of the new approach could easily be derived.  The important difference is that, in the general case, the stable homotopy equivalences required are based on Rozansky's maps $f_n$ which are very complicated, requiring multiple properties of the categorified projectors (idempotency, straightening adjacent braids, a careful multi-cone presentation).  Even with no crossings (as in the unknot or unlink), the passage from cabling with $n$ strands to cabling with $n+1$ requires extra projectors and clever manipulations between them.  In our new approach for the unknot, the only maps required are those that already arise in the stable sequence (\ref{old limiting eqn}) based on resolving crossings, and maps derived from Reidemeister moves providing the isotopy between $T(n,n+1)$ and $T(n+1,n)$.  In fact this new approach views the tail of the colored Khovanov homotopy types of the unknot as a stabilization (one $q$-degree at a time) of the sequence $\X(T(n+1,n))$ as $n\rightarrow\infty$, rather than as a statement about categorified projectors and colored homotopy types in the usual sense.

The simple form of the maps used in this approach also gives an improvement on the bound on $n$ for stabilization.  In Rozansky's approach, the bound grows like $2j$, while here the bound grows like $j$.  Compare Table \ref{Unknot colored table} to Table \ref{Hopf link Colored table} to see the gap between beginning of stabilization for adjacent columns in the two cases.

\appendix
\section{Proof of Lemma \ref{new stab m=n-1}}
In \cite{MW} the bound $m\geq n$ needed for stabilization appears solely due to its presence in Lemma 3.5 in that paper, which itself is a rephrasing of Lemma 1 in \cite{Sto}.  A careful reading of \cite{MW} ensures that this bound is never used explicitly again.  Thus our goal in this section is to prove this lemma holds in the case $m=n-1$ as well.

For this we recall the notations of \cite{MW}.  When $m=n-1$, the lemma is concerned with resolving crossings from $T(n,n)$ in order to arrive at $T(n,n-1)$.  In this situation, we define
\begin{itemize}
\item $D_0:= T(n,n)$.
\item For $i=1,\dots,n-1$, $D_i$ and $E_i$ are the diagrams obtained by resolving the `top-most' crossing of $D_{i-1}$ as a 0-resolution and 1-resolution respectively.  Thus we have the cofibration sequences (see \cite{MW} for the degree shifts)
\[\X(D_i) \hookrightarrow \X(D_{i-1}) \twoheadrightarrow \X(E_i)\]
and $D_{n-1}=T(n,n-1)$.
\item $c_i$ denotes the number of negative crossings present in $E_i$.
\end{itemize}
With these notations in place, we wish to prove:
\begin{lemma}
\label{new ci bound}
For all $i=1,\dots,n-1$,
\[c_i = 2n-3.\]
\end{lemma}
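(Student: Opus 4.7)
The plan is to produce the count $c_i = 2n-3$ by tracking a single strand-arc of $E_i$ whose orientation is forced to be reversed by the $1$-resolution at $\sigma_i$, and then counting the crossings it passes through. Since $T(n,n)$, oriented with all strands going upward, has all $n(n-1)$ crossings positive, the only mechanism by which a crossing of $E_i$ can fail to remain positive is that exactly one of its two strands is reversed relative to the upward orientation of $T(n,n)$; and the only source of such a reversal is the cup-cap produced by $1$-resolving $\sigma_i$.

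First I would pin down the local picture. In $E_i$ the resolved $\sigma_i$ is replaced by an upper arc (cup) joining positions $i$ and $i+1$ just above the crossing box and a lower arc (cap) joining those positions just below it. Above the cup, the strands rise straight through the $0$-resolved $\sigma_1,\ldots,\sigma_{i-1}$ (none of which touches position $i+1$), exit the braid at top positions $i$ and $i+1$, and continue via the closure arcs to the bottom of the braid. Orienting the component of $E_i$ that contains the cup-cap therefore forces the arc emerging from one end of the cup to carry the reversed orientation, and this reversal persists along the component until it is undone at the cap. I would then trace this reversed sub-arc explicitly: starting at the cap at position $i$ it descends past $\sigma_{i+1},\ldots,\sigma_{n-1}$ in the top row (no interactions, since each such $\sigma_k$ involves positions $\geq i+1$), enters the lower $n-1$ fractional twists $(\sigma_1\sigma_2\cdots\sigma_{n-1})^{n-1}$ at top position $i$, exits at bottom position $(i+1)\bmod n$, travels via the closure arc to top position $(i+1)\bmod n$, and descends through the $0$-resolved $\sigma_{i-1},\ldots,\sigma_1$ (none of which touches position $(i+1)\bmod n$, as $(i+1)\bmod n > i$) back to the cup's top-right endpoint. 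The key takeaway is that the reversed sub-arc meets no top-row crossings at all---every crossing it visits lies in the lower twists.

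The heart of the argument is then a direct count of the crossings of $(\sigma_1\cdots\sigma_{n-1})^{n-1}$ traversed by a single braid strand that enters at top position $i$. Across the $n-1$ consecutive fractional twists the strand's position shifts by $-1 \pmod n$, yielding the sequence of top-of-twist positions $i, i-1, \ldots, i-(n-2) \pmod n$, which is precisely $\{1,2,\ldots,n\}\setminus\{(i+1)\bmod n\}$; in particular position $1$ occurs exactly once. Within the twist in which the strand is at position $1$ it crosses all $n-1$ generators $\sigma_1,\ldots,\sigma_{n-1}$, while within each of the other $n-2$ twists (where it is at some position $\geq 2$) it crosses exactly one generator. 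The grand total is $(n-1)+(n-2) = 2n-3$, and since the reversed sub-arc is a single strand in a braid it meets each of these crossings exactly once. Every such crossing therefore has one strand reversed and the other in a naturally-oriented portion (either the rest of the same component or an entirely different component of $E_i$, neither of which revisits the reversed sub-arc), which flips its sign from $+$ to $-$; every other crossing of $E_i$ has both strands in naturally-oriented pieces and remains positive. Hence $c_i = 2n-3$. The main point requiring care is the claim that the reversed sub-arc really misses every top-row crossing, which I expect to be the main bookkeeping obstacle, especially in the edge cases $i=1$ (where the ``above-the-cup'' half of the top row is empty) and $i = n-1$ (where the ``below-the-cap'' half is empty); in both, however, the same position-counting argument applies verbatim.
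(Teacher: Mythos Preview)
Your proof is correct and takes a genuinely different route from the paper's. The paper argues by isotoping the turnback through the twisting, splitting into cases $i=1$ and $i>1$, and then counting Reidemeister moves: each R2 move removes one positive and one negative crossing, while the R1 moves encountered are negative, yielding $2n-3$ in both cases. You instead give a uniform combinatorial argument on the original diagram: identify the unique sub-arc of $E_i$ forced to run against the all-upward orientation inherited from $T(n,n)$, and count its crossings directly via the permutation induced by a single fractional twist. Your decomposition $(n-1)+(n-2)$ makes the origin of $2n-3$ transparent---one pass through position $1$ contributing $n-1$ crossings, and $n-2$ passes through higher positions contributing one each---and avoids the case split entirely. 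The paper's approach stays aligned with the turnback-pulling machinery of Lemma~\ref{Counting lemma} used throughout the paper; yours is more elementary and self-contained.

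Two small remarks. First, your cup/cap terminology is swapped from the standard convention (the upper arc of a $1$-resolution has shape $\cap$, hence ``cap''); this is harmless but worth correcting. Second, and more substantively, you should be explicit that you are choosing the orientation of the component containing the cup-cap so that the \emph{short} arc (your reversed sub-arc) is the one running downward, with the long arc and the remaining $n-i-1$ components all upward. There are two orientations of that component, and reversing the long arc instead gives a different value of $c_i$ (for instance $7$ rather than $5$ when $n=4$, $i=2$), since crossings with the small components change sign. The orientation you use is the natural one inherited from $T(n,n)$ and is the one for which the paper's Reidemeister-move count also yields exactly $2n-3$; you should state this choice rather than leave it implicit in the phrase ``the only source of such a reversal is the cup-cap.''
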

Proving this will verify the bound $c_i\geq n+m-2$ of Lemma 3.5 of \cite{MW} in the case $m=n-1$, and then the rest of the results of \cite{MW} go through to prove Lemma \ref{new stab m=n-1} here.

\begin{remark}
Note that these definitions for $D_i$ and $E_i$ are different from those used in the proof of Proposition \ref{main seq stab}, where arbitrary tangles are being considered away from the twisting and we `slide' the topmost twist over to be adjacent to the tangle rather than to the other twists before resolving crossings.  The pictures used in the proof below will make the difference clear, and will resemble the similar pictures in \cite{MW}.
\end{remark}

\begin{proof}[Proof of Lemma \ref{new ci bound}]
The case $c_1$ is considered separately from $c_{i>1}$.  For $E_1$ we see the diagram illustrated in Figure \ref{E1n-1}, where the strands are closed up outside of the picture in the usual way.  The red circles clearly indicate that the turnback can be pulled through via $n-2$ Reidemeister 2 moves, then a negative Reidemeister 1 move, then another $n-2$ Reidemeister 2 moves.  Each Reidemeister 2 move involves precisely one negative crossing, which quickly proves the claim.

\begin{figure}[h]
\centering
\vspace{.1in}\includegraphics[scale=.3]{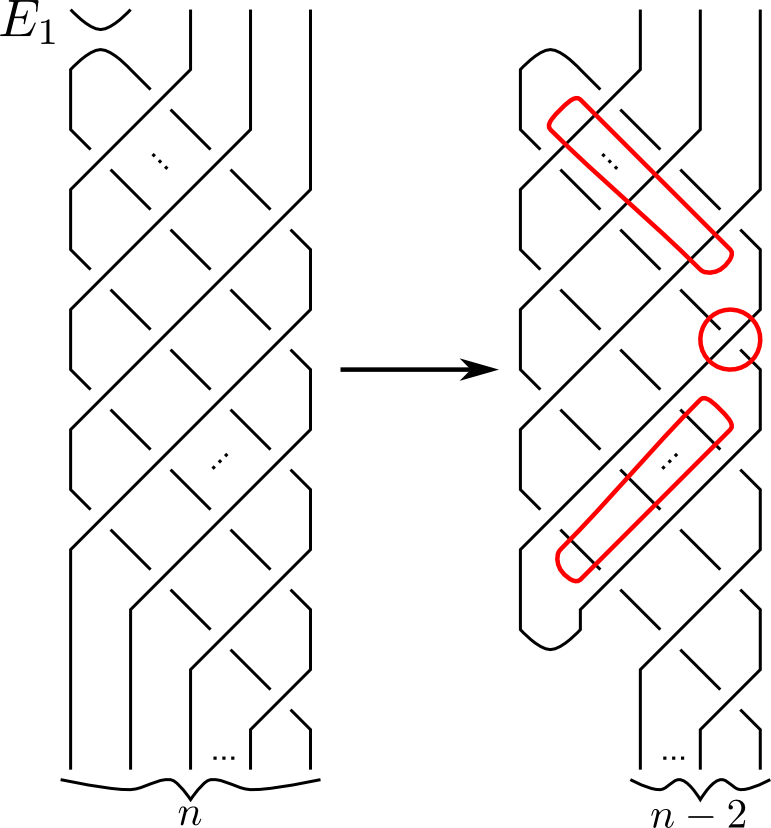}
\caption{The picture for $E_1$, where the topmost turnback is pulled around the cabling allowing for the diagram on the right; the red circles indicate Reidemeister moves that will occur while pulling the turnback through the twisting}
\vspace{.1in}
\label{E1n-1}
\end{figure}

For $E_{i>1}$, we see that the turnback can be pulled through the torus braid $\T(n,n-1)$ leaving us with a copy of $\T(n-2,n-3)$ as in Figure \ref{Ein-1} (note that we use the notation $\T$ to indicate the torus braid rather than the complete torus link; however, we continue to use the parentheses notation to indicate fractional twists rather than the full twists indicated by superscripts throughout the rest of the paper).
\begin{figure}[h]
\centering
\vspace{.1in}\includegraphics[scale=.3]{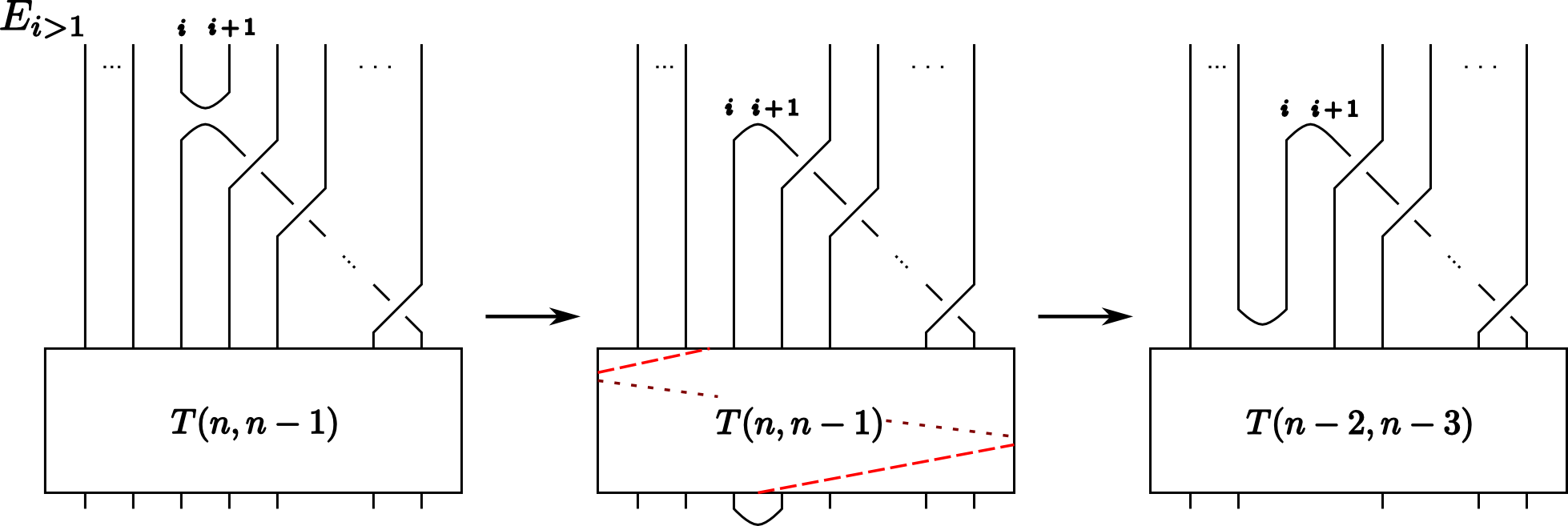}
\caption{The picture for $E_{i>1}$; the topmost turnback is pulled around the cabling and then through the torus braid $\T(n,n-1)$ along the indicated dashed line, eliminating two strands from the braid but keeping the twisting at one strand less than a full twist, leaving us with $\T(n-2,n-3)$}
\vspace{.1in}
\label{Ein-1}
\end{figure}
Now we count crossings similarly to the proof of Lemma \ref{Counting lemma}.  The initial braid $\T(n,n-1)$ had $(n-1)^2$ crossings, while the new $\T(n-2,n-3)$ has $(n-3)^2$ crossings.  The crossings `above' the braid remain unchanged, so the total change in the number of crossings is $(n-1)^2-(n-3)^2=4n-8$.  Since the turnback was able to swing completely around the entire torus braid (see the red dashed line in Figure \ref{Ein-1}), it must have accomplished precisely two (negative) Reidemeister 1 moves.  This leaves $4n-10$ crossings eliminated by Reidemeister 2 moves (see Figure \ref{E1n-1} to see that these are the only moves involved).  Thus half of the $4n-10$ crossings were negative, plus the two Reidemeister 1 moves gives precisely $2n-3$ as required.
\end{proof}

% % % % % % % % % % % % UP TO HERE

%Finally, for comparison to the graphs of stably equivalent Khovanov homotopy types in \cite{MW}, we include a graph schematically showing the stabilization of the $\X^a(T(n+1,n))$ in the $(a,n)$-plane.  As we increase $n$ linearly, the $q$-degree increases quadratically, so we get parabolas.  Proposition \ref{n to infty basic bound} provides the lower bound $n^2=a$, and the proof of Theorem \ref{X T infty infty thm} can be interpreted as saying that the parabola traced by the maps $\phi_n$, as $n$ grows, is `wider' than this bounding parabola, and thus will intersect it and then stay above it.

%\begin{figure}[h]
%\centering
%\vspace{.2in}\includegraphics[scale=.5]{anplane.png}
%\caption{Graphing various parabolas related to $\X^a(T(n+1,n))$ in the $(a,n)$-plane.  The lower bound $n^2=a$ is shown in red.  A parabola of stably equivalent Khovanov homotopy types is shown in blue.}
%\vspace{.2in}
%\end{figure}

\bibliographystyle{alpha}

\bibliography{A_Colored_Khovanov_Homotopy_Type_And_Its_Tail_For_B_Adequate_Links}

\begin{thebibliography}{Roz14b}

\bibitem[BN05]{BN}
Dror Bar-Natan.
\newblock Khovanov's homology for tangles and cobordisms.
\newblock {\em Geom. Topol.}, 9:1443--1499, 2005.

\bibitem[CK12]{CK}
Benjamin Cooper and Vyacheslav Krushkal.
\newblock Categorification of the {J}ones-{W}enzl projectors.
\newblock {\em Quantum Topol.}, 3(2):139--180, 2012.

\bibitem[Hog]{Hog}
Matt Hogancamp.
\newblock A polynomial action on colored $sl(2)$ link homology.
\newblock arXiv:1405.2574.

\bibitem[Kho00]{Khov}
Mikhail Khovanov.
\newblock A categorification of the {J}ones polynomial.
\newblock {\em Duke Math. J.}, 101(3):359--426, 2000.

\bibitem[Kho05]{Khov2}
Mikhail Khovanov.
\newblock Categorifications of the colored {J}ones polynomial.
\newblock {\em J. Knot Theory Ramifications}, 14(1):111--130, 2005.

\bibitem[KL94]{KL}
Louis~H. Kauffman and S{\'o}stenes~L. Lins.
\newblock {\em Temperley-{L}ieb recoupling theory and invariants of
  {$3$}-manifolds}, volume 134 of {\em Annals of Mathematics Studies}.
\newblock Princeton University Press, Princeton, NJ, 1994.

\bibitem[LOS]{LOS}
Andrew Lobb, Patrick Orson, and Dirk Schuetz.
\newblock A {K}hovanov stable homotopy type for colored links.
\newblock arXiv:1602.01386.

\bibitem[LS14a]{LS}
Robert Lipshitz and Sucharit Sarkar.
\newblock A {K}hovanov stable homotopy type.
\newblock {\em J. Amer. Math. Soc.}, 27(4):983--1042, 2014.

\bibitem[LS14b]{LS3}
Robert Lipshitz and Sucharit Sarkar.
\newblock A refinement of {R}asmussen's {$S$}-invariant.
\newblock {\em Duke Math. J.}, 163(5):923--952, 2014.

\bibitem[LS14c]{LS2}
Robert Lipshitz and Sucharit Sarkar.
\newblock A {S}teenrod square on {K}hovanov homology.
\newblock {\em J. Topol.}, 7(3):817--848, 2014.

\bibitem[Pen71]{Pen}
Roger Penrose.
\newblock Applications of negative dimensional tensors.
\newblock In {\em Combinatorial {M}athematics and its {A}pplications ({P}roc.
  {C}onf., {O}xford, 1969)}, pages 221--244. Academic Press, London, 1971.

\bibitem[Roz14a]{Roz}
Lev Rozansky.
\newblock An infinite torus braid yields a categorified {J}ones-{W}enzl
  projector.
\newblock {\em Fund. Math.}, 225(1):305--326, 2014.

\bibitem[Roz14b]{Roz2}
Lev Rozansky.
\newblock Khovanov homology of a unicolored {B}-adequate link has a tail.
\newblock {\em Quantum Topol.}, 5(4):541--579, 2014.

\bibitem[Sto07]{Sto}
Marko Sto{\v{s}}i{\'c}.
\newblock Homological thickness and stability of torus knots.
\newblock {\em Algebr. Geom. Topol.}, 7:261--284, 2007.

\bibitem[Wen87]{Wenzl}
Hans Wenzl.
\newblock On sequences of projections.
\newblock {\em C. R. Math. Rep. Acad. Sci. Canada}, 9(1):5--9, 1987.

\bibitem[Wil]{MW}
Michael Willis.
\newblock Stabilization of the {K}hovanov homotopy type of torus links.
\newblock {\em \emph{To appear in} Int. Math. Res. Notices}.
\newblock arXiv:1511.02742.

\end{thebibliography}

\end{document}